\setlist[enumerate]{label=(\roman*),leftmargin=2.5em}
\setlist[itemize]{label={\raisebox{0.15ex}{\tiny \textbullet}},leftmargin=\parindent}
\theoremstyle{plain}
\newtheorem{theorem}{Theorem}[section]
\newtheorem{lemma}[theorem]{Lemma}
\newtheorem{corollary}[theorem]{Corollary}
\theoremstyle{definition}
\newtheorem{example}[theorem]{Example}
\newtheorem{problem}{Problem}
\Crefname{prb}{Problem}{Problems}
\renewcommand{\geq}{\geqslant}
\renewcommand{\leq}{\leqslant}
\newcommand{\Mbar}{\overline{\mathcal{M}}}
\newcommand{\MIN}{{\scshape Min}\xspace}
\newcommand{\MAX}{{\scshape Max}\xspace}
\newcommand{\R}{\mathbb{R}}
\newcommand{\N}{\mathbb{N}}
\newcommand{\Acal}{\mathcal{A}}
\newcommand{\Gcal}{\mathcal{G}}
\newcommand{\Hcal}{\mathcal{H}}
\newcommand{\Scal}{\mathcal{S}}
\newcommand{\unit}{e}
\newcommand{\tail}{\mathbf{t}}
\newcommand{\head}{\mathbf{h}}
\newcommand{\cpt}[1]{ {#1^c} }
\newcommand{\Cpt}[1]{{[n] \setminus #1}}
\newcommand{\Hilbert}[1]{\| \ifx\\#1\\ \cdot \else #1 \fi \|_\textup{H}}
\newcommand{\Rplus}{\R_{\scriptscriptstyle >0}}
\newcommand{\transpose}[1]{{#1}^\intercal}
\newcommand{\Tsr}{\mathcal{F}}
\DeclareMathOperator{\intr}{int}
\DeclareMathOperator{\reach}{reach}
\DeclareMathOperator{\supp}{supp}
\DeclareMathOperator{\argmin}{arg\,min}
\DeclareMathOperator{\argmax}{arg\,max}
\title[Nonlinear {P}erron-{F}robenius eigenvectors]
{A game theory approach to the existence and uniqueness of
nonlinear {P}erron-{F}robenius eigenvectors}
\date{\today}
\author[M.~Akian]{Marianne Akian}
\author[S.~Gaubert]{St\'ephane Gaubert}
\address{INRIA Saclay-Ile-de-France and CMAP, Ecole polytechnique,
Route de Saclay, 91128 Palaiseau Cedex, France}
\email{marianne.akian@inria.fr}
\email{stephane.gaubert@inria.fr}
\author[A.~Hochart]{Antoine Hochart}
\address{Universidad Adolfo Ib{\'a}{\~n}ez, Santiago, Chile}
\email{antoine.hochart@gmail.com}
\thanks{A.~Hochart is funded by FONDECYT grant 3180662.}
\subjclass[2010]{Primary: 47J10, Secondary: 47H09, 47H07, 91A15.}
\keywords{Nonlinear eigenproblem, nonexpansive map, Hilbert's projective metric,
hypergraph, zero-sum stochastic game}
\begin{document}

\maketitle

\begin{abstract}
  We establish a generalized Perron-Frobenius theorem, based on a combinatorial criterion
  which entails the existence of an eigenvector for any nonlinear order-preserving and
  positively homogeneous map $f$ acting on the open orthant $\Rplus^n$.
  This criterion involves dominions, i.e., sets of states that can be made invariant by
  one player in a two-person game that only depends on the behavior of $f$
  ``at infinity''.
  In this way, we characterize the situation in which for all $\alpha, \beta > 0$,
  the ``slice space''
  $\Scal_\alpha^\beta := \{ x \in \Rplus^n \mid \alpha x \leq f(x) \leq \beta x \}$
  is bounded in Hilbert's projective metric, or, equivalently, for all uniform
  perturbations $g$ of $f$, all the orbits of $g$ are bounded
  in Hilbert's projective metric.
  This solves a problem raised by Gaubert and Gunawardena (Trans.\ AMS, 2004).
  We also show that the uniqueness of an eigenvector is characterized by
  a dominion condition, involving a different game depending now on the local behavior of
  $f$ near an eigenvector.
  We show that the dominion conditions can be verified by directed hypergraph methods.
  We finally illustrate these results by considering specific classes of nonlinear maps,
  including Shapley operators, generalized means and nonnegative tensors.
\end{abstract}

\section{Introduction}
\label{sec:intro}

\subsection{Motivations}

Nonlinear Perron-Frobenius theory \cite{LN12} deals with self-maps $f$ of a closed convex
cone $K$ in a Banach space, that are positively homogeneous of degree one and
preserve the partial order $\leq$ induced by $K$.
For brevity, we will refer to these as {\em monotone homogeneous} maps.
Such maps appear in several fields (sometimes up to immediate variations), including
population dynamics \cite{Nus89,thieme}, dynamical systems \cite{Nuss-Mallet,fathi08,Kra15},
optimal control and risk sensitive control \cite{FHH97,AG03,CCHH10,AB15},
zero-sum repeated games \cite{KM97,RS01,Sor04,Ren11,GV12}, or in the study of diffusions
on fractals \cite{sabot,metz}.
In all these fields, a basic question is to understand the asymptotic behavior of
the iterates $f^k$ as $k$ tends to infinity.

The dynamical behavior of $f$ is best understood when $f$ has an eigenvector
in the interior of $K$, i.e., when there exists a vector $u \in \intr K$ such that
$f(u) = \lambda u$ for some scalar $\lambda > 0$.
For instance, if the cone $K$ is polyhedral and finite-dimensional, this entails that
for all $x \in K$, $\lambda^{-k} f^k(x)$ converges to a periodic sequence whose length
can be bounded only in terms of the number of facets of $K$, see \cite[Thm.~2.2]{AGLN} and
Chapter~8 of \cite{LN12} for a broader perspective.
As pointed out by Nussbaum \cite[p.~4]{Nus88}, the existence of such an eigenvector is
``perhaps $\textup{[\,\dots]}\xspace$ the irreducible analytic difficulty''.
We point out that standard fixed point arguments only lead, under some compactness
assumptions, to the existence of an eigenvector in the {\em closed} cone $K$. 

This question is already non-obvious when $K$
is the standard nonnegative orthant of $\R^n$.
We denote by $\Rplus^n$ the interior of this cone, i.e., the set of all vectors in $\R^n$
whose entries are positive.
In this setting, the existence of a positive eigenvector $u$ with eigenvalue $\lambda$
implies that 
\[
  \lim_{k \to \infty} \left[ f^k(x) \right]_i^{1/k}  = \lambda
\]
for all $x$ in $\Rplus^n$ and all $i$ in $[n] := \{1,\dots,n\}$.
In particular, the limit is independent of the choice of $i$ and $x$
(see e.g., \cite[Sec.~2.2]{GG04}).
This limit represents a geometric ``escape rate'' which is of essential interest in applications
--- for instance, in population dynamics, it represents the rate of growth.

A useful tool in nonlinear Perron-Frobenius theory is {\em Hilbert's projective metric},
defined for $x, y \in \intr K$, and in particular when $x, y \in \Rplus^n$, by
\[
  d_\textup{H}(x,y) := \log \, \inf
  \left\{ \frac{\beta}{\alpha} \mid \alpha x \leq y \leq \beta x \right\} .
\]
The latter map satisfies all the axioms of a metric, except that $d_\textup{H}(x,y) = 0$
holds whenever $x$ and $y$ are proportional.
Hence, it yields a bona fide metric on the space of rays included in $\intr K$.
It is known that $f$ is nonexpansive with respect to $d_\textup{H}$, meaning that
$d_\textup{H} ( f(x), f(y) ) \leq d_\textup{H} (x, y)$.
In this way, the eigenproblem for $f$ can be thought of as a fixed point problem for
the nonexpansive map obtained by making $f$ act on the rays of $\intr K$.
A result of Nussbaum \cite[Thm.~4.1]{Nus88} implies that $f$ has an eigenvector if
and only if one orbit (or equivalently every orbit) of $f$ is bounded in Hilbert's
projective metric.

There are invariant sets that can be defined for all monotone homogeneous self-maps
of $\Rplus^n$, the so-called {\em slice spaces} (see \cite{GG04}):
\begin{equation*}
  \Scal_\alpha^\beta(f) := \left\{x \in \Rplus^n \mid \alpha x \leq f(x) \leq \beta x \right\} ,
  \quad \alpha , \beta > 0 .
   \label{e-slice}
\end{equation*}
Hence, a simple method to guarantee the existence of an eigenvector of $f$ consists in
finding a nonempty slice space $\Scal_\alpha^\beta(f)$ that is bounded in Hilbert's
projective metric.
Note that if $\alpha$ is small enough and $\beta$ large enough, then the slice space
$\Scal_\alpha^\beta(f)$ is not empty.
So, the difficulty in this approach resides in checking the boundedness.

Of particular interest is the situation in which the slice spaces $\Scal_\alpha^\beta(f)$
are bounded {\em for all} parameters $\alpha$ and $\beta$. 
Gaubert and Gunawardena showed that this is amenable to combinatorial methods, and
gave sufficient conditions, involving graphs or recession functions, implying that
all slice spaces are bounded in Hilbert's projective metric (see \cite[Thm.~4, Thm.~13]{GG04}),
leading to a generalized Perron-Frobenius theorem. 
They observed however that these conditions are suboptimal and formulated the following problem. 

\begin{problem}[{\cite[p.~4937]{GG04}}]
  \label[prb]{pb:disjoint-dominions-mult}
  Give a combinatorial characterization of the property that all slice spaces of
  a monotone homogeneous self-map of $\Rplus^n$ are bounded in Hilbert's projective metric.
\end{problem}

Requiring the boundedness of all slice spaces leads to an existence criterion for
eigenvectors which is robust to uniform perturbations of the map.
Indeed, it is shown in \cite[Thm.~5.1]{Hoc17},
extending \cite[Thm.~3.1]{AGH15}, that all the slice spaces of $f$ are bounded in Hilbert's
projective metric if and only if every monotone homogeneous map $g$ such that
$d_\textup{H}(f(x),g(x))$ is bounded independently of $x \in \Rplus^n$ has
a positive eigenvector.

A further motivation arises from a recent work of Lemmens, Lins and Nussbaum \cite{LLN18}, 
dealing with the boundedness of the eigenspace of $f$ in Hilbert's projective metric.
The latter is equivalent to the property that at least one slice space of $f$ is nonempty
and bounded in this metric. 
Whereas this property is hard to check, we shall see that requiring the boundedness of
{\em all} slice spaces is computationally more tractable.

\subsection{Description of the main results}

In the present paper, we provide effective criteria inspired by game theory
to check the existence and uniqueness of nonlinear eigenvectors,
and solve \Cref{pb:disjoint-dominions-mult}.

To this end, we associate to any monotone homogeneous map $f: \Rplus^n \to \Rplus^n$
a two-player stochastic game $\Gamma_\infty(f)$ defined as follows.
The state space is $[n]$ and when in state $i \in [n]$, the action space of
the first player, called \MIN (resp., the second player, called \MAX), is composed of
the sets $J \subset [n]$ such that
\begin{align*}
  \label{e-rel}
  \lim_{\alpha \to +\infty} f_i \big( \exp( \alpha \unit_\cpt{J} ) \big) < +\infty
  \qquad \left( \text{resp.,} \enspace
  \lim_{\alpha \to -\infty} f_i \big( \exp( \alpha \unit_\cpt{J}) \big) > 0 \right) .
\end{align*}
Here, $f_i$ denotes the $i$th coordinate function of $f$; $\cpt{J} := \Cpt{J}$ is
the complement of $J$ in $[n]$; for any set $K \subset [n]$, $\unit_K$ is the vector
of $\R^n$ with entries equal to $1$ on $K$ and $0$ elsewhere; and $\exp: \R^n \to \Rplus^n$
is the map which applies the exponential componentwise.
The game is played repeatedly, starting from a given initial position.
At each stage, the players choose simultaneously a set $I$ and $J$, respectively,
and the next state is drawn uniformly in $I \cap J$.
We will see that this intersection is never empty, so that the game is well posed.

We call {\em dominion} a nonempty subset of states $\Delta \subset [n]$ subject to the control
of one player, in the sense that from any initial position in $\Delta$, that player can force
the state to remain almost surely in $\Delta$ at each stage, whatever actions the other player chooses.

Our first main result is the following.

\begin{theorem}[Generalized Perron-Frobenius theorem]
  \label{cor:solvability-eigenproblem}
  Let $f: \Rplus^n \to \Rplus^n$ be a monotone homogeneous map.
  If the two players do not have disjoint dominions in the game 
  $\Gamma_\infty(f)$, then $f$ has an eigenvector.
\end{theorem}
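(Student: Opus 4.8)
The plan is to establish the contrapositive of the following reformulation: \emph{if the players have no disjoint dominions in $\Gamma_\infty(f)$, then every slice space $\Scal_\alpha^\beta(f)$ is bounded in Hilbert's projective metric.} Granting this, the theorem follows, since for $\alpha$ small and $\beta$ large $\Scal_\alpha^\beta(f)$ is nonempty, and a nonempty slice space bounded in $d_\textup{H}$ yields an eigenvector, as recalled in the introduction. So assume some $\Scal_\alpha^\beta(f)$ is unbounded in $d_\textup{H}$; I will exhibit a \MIN-dominion and a \MAX-dominion that are disjoint.

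First I record two remarks. By monotonicity and homogeneity of $f$, the family of admissible actions of each player at a state $i$ is upward closed in $2^{[n]}$ (enlarging the chosen set only helps the defining limit), and $[n]$ is always admissible for either player. Hence a nonempty $\Delta\subseteq[n]$ is a \MIN-dominion if and only if $\Delta$ is itself an admissible \MIN-action at every $i\in\Delta$: since \MAX may play $[n]$, the action by which \MIN confines the play at $i$ must be contained in $\Delta$, and upward closure then allows it to be taken equal to $\Delta$; symmetrically for \MAX. Secondly, for $x\in\Scal_\alpha^\beta(f)$ the quantity $f_i(x)$ has the order of magnitude of $x_i$, being squeezed between $\alpha x_i$ and $\beta x_i$ --- this is the only input from the dynamics.

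Now choose $x^{(k)}\in\Scal_\alpha^\beta(f)$ with $d_\textup{H}(x^{(k)},\mathbf 1)\to\infty$. After passing to a subsequence, every ratio $x^{(k)}_i/x^{(k)}_j$ converges in $[0,+\infty]$; declaring $i\sim j$ when this limit is finite and positive defines an equivalence relation whose classes form a chain $B_1\succ B_2\succ\dots\succ B_m$ ordered by growth rate, with $m\ge 2$ because $d_\textup{H}(x^{(k)},\mathbf 1)=\log\bigl(\max_i x^{(k)}_i/\min_j x^{(k)}_j\bigr)\to\infty$. I claim that $B_m$ is a \MIN-dominion and $B_1$ a \MAX-dominion; as these are nonempty and disjoint, this completes the argument. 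To prove the first claim, fix $i\in B_m$ and rescale $x^{(k)}$ --- legitimate, since $\Scal_\alpha^\beta(f)$, the metric $d_\textup{H}$ and the strata $B_\ell$ are scale invariant --- so that $\min_{j\in B_m}x^{(k)}_j=1$. Then the coordinates in $B_m$ stay in a fixed bounded interval (a stratum is a single $\sim$-class), while all coordinates outside $B_m$ tend to $+\infty$; consequently $x^{(k)}\ge\exp\bigl(t_k\unit_{\cpt{B_m}}\bigr)$ for some $t_k\to+\infty$, and monotonicity together with the slice bound gives $f_i\bigl(\exp(t_k\unit_{\cpt{B_m}})\bigr)\le f_i(x^{(k)})\le\beta x^{(k)}_i$, a bounded quantity. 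Since $t\mapsto f_i\bigl(\exp(t\unit_{\cpt{B_m}})\bigr)$ is non-decreasing and bounded along $t_k\to+\infty$, its limit as $t\to+\infty$ is finite, i.e.\ $B_m$ is an admissible \MIN-action at $i$. The dual argument --- rescale so that $\max_{j\in B_1}x^{(k)}_j=1$, whence the coordinates outside $B_1$ tend to $0$ and $x^{(k)}\le\exp(s_k\unit_{\cpt{B_1}})$ with $s_k\to-\infty$, and invoke the lower slice bound --- shows $B_1$ is an admissible \MAX-action at every $i\in B_1$.

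The step I expect to be the crux is recognizing that the right dominions are the \emph{extreme} strata $B_1$ and $B_m$: a single escaping sequence by itself only constrains the players' actions, and one must notice that the comparability of coordinates \emph{within} one stratum is exactly what lets the inequality $\alpha x\le f(x)\le\beta x$ be promoted, via homogeneity and monotonicity, to the one-sided limit at $\pm\infty$ defining an admissible action --- using the elementary fact that a monotone function bounded along a sequence tending to $\pm\infty$ is bounded in the limit. The remaining ingredients (the subsequence extraction, the rescalings, the upward closure of the action families, and the characterization of dominions) are routine.
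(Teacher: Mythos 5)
Your proof is correct and follows essentially the same route as the paper: the paper also proves the contrapositive of ``no disjoint dominions $\Rightarrow$ all slice spaces bounded'' by extracting from an unbounded sequence in a slice space the set of slowest-growing coordinates (a \MIN dominion) and the set of fastest-growing coordinates (a \MAX dominion), using exactly the squeeze $\alpha x\le f(x)\le\beta x$ together with monotonicity; the only cosmetic differences are that the paper works in logarithmic coordinates and isolates the two extreme sets by a maximal-cardinality subsequence argument rather than your full chain of growth-rate equivalence classes.
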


This theorem provides a considerable extension of the nonlinear Perron-Frobenius theorems
in \cite{GG04}, where it is shown that the same conclusion holds if a certain digraph
associated to $f$ is strongly connected (Theorem~2) and more generally, if $f$ is
indecomposable (Theorem~6).
The condition of \Cref{cor:solvability-eigenproblem} is less demanding, as illustrated
in \Cref{sec:stochgames}.

The absence of disjoint dominions in the game $\Gamma_\infty(f)$ is a combinatorial condition.
Our next result reveals the importance of this condition by showing that it is
{\em equivalent} to the boundedness of all the slice spaces of $f$, therefore
solving \Cref{pb:disjoint-dominions-mult}.
This condition is also equivalent to the existence of eigenvectors for all suitable
perturbations of the map $f$, and it can also be characterized in terms of
the dynamical behavior of such perturbed maps.
More precisely, we say that a monotone homogeneous map $g: \Rplus^n \to \Rplus^n$ 
is a {\em uniform perturbation} of $f$, with respect to Hilbert's projective metric, if
there exists a constant $\gamma \geq 1$ and, for all $x \in \Rplus^n$,
positive reals $\alpha_x, \beta_x$ satisfying $\frac{\beta_x}{\alpha_x} \leq \gamma$
and such that $\alpha_x f(x) \leq g(x) \leq \beta_x f(x)$.

\begin{theorem}
  \label{thm:existence-multiplicative}
  Let $f: \Rplus^n \to \Rplus^n$ be a monotone homogeneous map.
  The following assertions are equivalent:
  \begin{enumerate}
    \item the two players do not have disjoint dominions in the game $\Gamma_\infty(f)$;
      \label{it:dominion-condition}
    \item all the slice spaces of $f$ are bounded in Hilbert's projective metric;\label{it:main-thm-ii}
    \item for all diagonal matrices $D$ having positive diagonal entries, 
      the map $Df$ has an eigenvector;
      \label{it:main-thm-iii}
    \item for all diagonal matrices $D$ having positive diagonal entries, the limit
      \[
        \lim_{k \to \infty} \left[ (Df)^k(x) \right]_i^{1/k}
      \]
      exists for all $x \in \Rplus^n$ and all $i \in [n]$, and is independent of
      the choice of $x$ and $i$;
      \label{it:main-thm-iv}
    \item every monotone homogeneous map $g$ that is a uniform perturbation of $f$ has
      an eigenvector;
      \label{it:main-thm-v}
    \item for every monotone homogeneous map $g$ that is a uniform perturbation of $f$,
      the limit
      \[
        \lim_{k \to \infty} \left[ g^k(x) \right]_i^{1/k}  
      \]
      exists for all $x \in \Rplus^n$ and all $i \in [n]$, and is independent of
      the choice of $x$ and $i$.
      \label{it:main-thm-vi}
  \end{enumerate}
\end{theorem}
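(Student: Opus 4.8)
The plan is to establish the cycle of implications
\eqref{it:dominion-condition} $\Rightarrow$ \eqref{it:main-thm-ii} $\Rightarrow$ \eqref{it:main-thm-iii} $\Rightarrow$ \eqref{it:main-thm-iv} $\Rightarrow$ \eqref{it:main-thm-vi} $\Rightarrow$ \eqref{it:main-thm-v} $\Rightarrow$ \eqref{it:dominion-condition}, using the more elementary steps as connective tissue and concentrating the real work in the passage between the combinatorial condition and the boundedness of slice spaces. The equivalences among \eqref{it:main-thm-iii}, \eqref{it:main-thm-iv}, \eqref{it:main-thm-v}, \eqref{it:main-thm-vi} are mostly soft: an eigenvector $u$ with $g(u)=\lambda u$ forces $[g^k(x)]_i^{1/k}\to\lambda$ for every $x$ and $i$ because $d_\textup{H}(g^k(x),\lambda^k u)$ stays bounded (nonexpansiveness) and Hilbert-metric boundedness controls the ratios $[g^k(x)]_i/\lambda^k$ from above and below; this gives \eqref{it:main-thm-iii}$\Rightarrow$\eqref{it:main-thm-iv} and \eqref{it:main-thm-v}$\Rightarrow$\eqref{it:main-thm-vi}. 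The maps $Df$ are themselves uniform perturbations of $f$ (take $\gamma = (\max_i D_{ii})/(\min_i D_{ii})$), so \eqref{it:main-thm-v}$\Rightarrow$\eqref{it:main-thm-iii} and \eqref{it:main-thm-vi}$\Rightarrow$\eqref{it:main-thm-iv} are immediate. For \eqref{it:main-thm-iv}$\Rightarrow$\eqref{it:main-thm-vi} one notes that the escape rate is the same for all uniform perturbations (a uniform perturbation of $f$ differs from some $Df$ only by a Hilbert-bounded amount, which cannot change the exponential growth rate), and \eqref{it:main-thm-iii}$\Rightarrow$\eqref{it:main-thm-v}, \eqref{it:main-thm-vi}$\Rightarrow$\eqref{it:main-thm-v} follow since existence of the common limit $\lambda$ means all orbits of $g$ are Hilbert-bounded (each coordinate is squeezed between $(\lambda-\varepsilon)^k$ and $(\lambda+\varepsilon)^k$ up to multiplicative constants, but one has to be careful and instead invoke that a finite escape rate plus monotonicity/homogeneity yields a bounded orbit, then apply Nussbaum's theorem), giving an eigenvector.

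The geometric heart of the argument is \eqref{it:dominion-condition} $\Leftrightarrow$ \eqref{it:main-thm-ii}. For the contrapositive of $\eqref{it:dominion-condition}\Rightarrow\eqref{it:main-thm-ii}$, suppose some slice space $\Scal_\alpha^\beta(f)$ is unbounded in $d_\textup{H}$; then there is a sequence $x^{(k)}\in\Scal_\alpha^\beta(f)$ whose coordinates, after normalization, split into a ``large'' block and a ``small'' block with the ratio between the blocks diverging. I would pass to a subsequence so that $(\log x^{(k)} )/\|{\cdot}\|$ converges to a nonzero vector $v$ with $0/1$-type structure, and read off from the sandwich $\alpha x^{(k)}\le f(x^{(k)})\le\beta x^{(k)}$, together with monotonicity and homogeneity of $f$, that the support $\Delta^+ := \{i : v_i = \max_j v_j\}$ of the large block is a dominion for \MAX and the support of the small block is a dominion for \MIN, exploiting exactly the defining inequalities of the action sets in $\Gamma_\infty(f)$ (the limits $\lim_{\alpha\to\pm\infty} f_i(\exp(\alpha\unit_{\cpt J}))$ encode precisely how the $i$th coordinate of $f$ behaves when the coordinates outside $J$ are sent to $0$ or $\infty$). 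Conversely, if the players have disjoint dominions $\Delta_{\MIN}$ and $\Delta_{\MAX}$, then feeding $f$ a vector that is $e^{-t}$ on $\Delta_{\MIN}$, $e^{t}$ on $\Delta_{\MAX}$ and $1$ elsewhere and letting $t\to\infty$ produces, via the dominion property and the action-set inequalities, points in a fixed slice space $\Scal_\alpha^\beta(f)$ whose Hilbert diameter grows like $t$, so that slice space is unbounded — this is essentially the content of \Cref{cor:solvability-eigenproblem} read through Nussbaum's criterion, and it is where the game $\Gamma_\infty(f)$ has to be set up carefully so that ``dominion'' matches ``invariant direction for the recession behavior of $f$''.

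The step \eqref{it:main-thm-ii}$\Rightarrow$\eqref{it:main-thm-iii} is then routine: for any positive diagonal $D$, pick $\alpha$ small and $\beta$ large so that $\Scal_\alpha^\beta(Df)$ is nonempty; since the recession behavior of $Df$ coincides with that of $f$ up to bounded factors, $\Scal_\alpha^\beta(Df)$ is contained in a finite union (or scaling) of slice spaces of $f$ and hence bounded in $d_\textup{H}$; being nonempty, bounded and invariant under the nonexpansive action of $Df$ on rays, it contains a fixed point by Nussbaum's theorem \cite[Thm.~4.1]{Nus88}, i.e.\ $Df$ has an eigenvector.

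I expect the main obstacle to be the direction $\eqref{it:dominion-condition}\Rightarrow\eqref{it:main-thm-ii}$ — that is, extracting \emph{genuine} dominions (with the exact combinatorial meaning dictated by the one-sided limits in the definition of $\Gamma_\infty(f)$) out of an arbitrary unbounded sequence in a slice space. The difficulty is twofold: one must control the full profile of ``orders of magnitude'' of the coordinates (not just a two-level large/small split, since the limiting log-vector $v$ can take several values), and one must convert the asymptotic inequalities $\alpha x^{(k)}\le f(x^{(k)})\le\beta x^{(k)}$ into the \emph{uniform} statement that \MAX (resp.\ \MIN) has a strategy keeping the state in $\Delta^+$ (resp.\ in the small block) against \emph{all} opponent actions — this requires knowing that the relevant limits defining the action sets are attained, which in turn uses monotonicity and homogeneity but needs the slice-space inequalities to hold along a whole sequence, not just in the limit. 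Handling the case where $v$ has more than two levels likely requires an induction on the number of levels or a careful choice of which level set to declare the dominion.
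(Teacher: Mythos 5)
Your plan for the two ``hard'' directions is only half right, and the soft connective tissue you rely on to close the cycle is where the proposal breaks. The contrapositive of \ref{it:dominion-condition}~$\Rightarrow$~\ref{it:main-thm-ii} is indeed the paper's first main step, and your sketch matches it in spirit; the multi-level difficulty you flag is resolved in the paper not by induction but by taking $I$ to be a maximal-cardinality set of coordinates admitting a bounded subsequence (after normalizing $\min_\ell u^k_\ell=0$) and $J$ the analogous set measured from the top, discarding the middle coordinates, which tend to infinity in both directions and belong to neither dominion. The real problem is how you get back from \ref{it:main-thm-iv} (or \ref{it:main-thm-vi}) to \ref{it:dominion-condition}. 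Your step \ref{it:main-thm-iv}~$\Rightarrow$~\ref{it:main-thm-vi} (``a uniform perturbation differs from some $Df$ by a Hilbert-bounded amount, which cannot change the growth rate'') is false as stated: $g=2f$ is a uniform perturbation with $d_\textup{H}(f(x),g(x))=0$ and a different escape rate, and more to the point a general uniform perturbation $g$ satisfies only $\alpha_x f(x)\leq g(x)\leq \beta_x f(x)$ with $\beta_x/\alpha_x\leq\gamma$, where $\alpha_x$ may be unbounded over rays, so $g$ is not sandwiched between fixed multiples of any $Df$ and the existence of the limit does not transfer. Your step \ref{it:main-thm-vi}~$\Rightarrow$~\ref{it:main-thm-v} rests on ``a finite constant escape rate yields a bounded orbit,'' which is also false for an individual map: orbits can drift sublinearly in Hilbert's seminorm while $g^k(x)^{1/k}$ converges to a constant, and this is exactly why the theorem is not a map-by-map statement. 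With both of these steps removed, nothing in your proposal implies \ref{it:dominion-condition} from \ref{it:main-thm-iv}, so items \ref{it:main-thm-iv} and \ref{it:main-thm-vi} dangle and the equivalence is not established.

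The missing idea is the paper's second contribution: the contrapositive of \ref{it:main-thm-iv}~$\Rightarrow$~\ref{it:dominion-condition}. Given disjoint dominions $I$ (for \MIN) and $J$ (for \MAX), one perturbs additively by $s\unit_J$ (multiplicatively, a specific diagonal $D$) with $s>\beta-\alpha$ and proves by induction the two-sided bound $k(s\unit_J+\alpha\unit)\leq S^k(0)\leq k(s\unit_{\cpt I}+\beta\unit)$, so every accumulation point of $k^{-1}S^k(0)$ takes values in $[\alpha,\beta]$ on $I$ and in $[s+\alpha,s+\beta]$ on $J$ and cannot be constant; the remaining equivalences \ref{it:main-thm-ii}~$\Leftrightarrow$~\ref{it:main-thm-iii}~$\Leftrightarrow$~\ref{it:main-thm-v} are quoted from \cite{GG04} and \cite{Hoc17} rather than re-derived by the bounded-orbit argument you propose. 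Separately, your direct construction for ``disjoint dominions $\Rightarrow$ some slice space of $f$ unbounded'' (setting the vector to $e^{-t}$ on one dominion and $e^{t}$ on the other) does not control $T_\ell(v^t)-v^t_\ell$ for the coordinates $\ell\notin I\cup J$, so the constructed points need not lie in any fixed slice space; making this work would require a Brouwer-type fixed-point argument on the middle block, as in the paper's uniqueness proof, or else one should simply deduce $\neg$\ref{it:dominion-condition}~$\Rightarrow$~$\neg$\ref{it:main-thm-ii} from $\neg$\ref{it:dominion-condition}~$\Rightarrow$~$\neg$\ref{it:main-thm-iv} and the chain \ref{it:main-thm-ii}~$\Rightarrow$~\ref{it:main-thm-iii}~$\Rightarrow$~\ref{it:main-thm-iv}, as the paper does.
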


Here, the main contribution consists of the implications
\ref{it:dominion-condition}~$\Rightarrow$~\ref{it:main-thm-ii} and
\ref{it:main-thm-iv}~$\Rightarrow$~\ref{it:dominion-condition}, leading to
a full characterization.
The implication \ref{it:main-thm-ii}~$\Rightarrow$~\ref{it:main-thm-iii} follows
from \cite[Thm.~3]{GG04}.
The implications \ref{it:main-thm-iii}~$\Rightarrow$~\ref{it:main-thm-v}~$\Rightarrow$~\ref{it:main-thm-ii}
were established in~\cite[Thm.~5.1]{Hoc17}.
The other implications are either easy or straightforward. 

\medskip
We now turn our attention to the question of the {\em uniqueness} of the eigenvector.
Surprisingly, this is still controlled by dominions, but in a different game $\Gamma_u(f)$
defined in \Cref{sec:uniqueness}, depending only on the local behavior of $f$
at point $u$.

\begin{theorem}
  \label{thm:dominion-uniqueness-mult}
  Suppose that $f: \Rplus^n \to \Rplus^n$ is a monotone homogeneous map and that
  it has a positive eigenvector $u$.
  Then, $u$ is the unique positive eigenvector of $f$, up to a scalar factor,
  if and only if the players do not have disjoint dominions in the game $\Gamma_u(f)$.
\end{theorem}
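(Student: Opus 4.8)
The plan is to prove both implications by relating the dominion structure of $\Gamma_u(f)$ to the set of positive fixed points of $f$. After normalising the eigenvalue to $1$ (replace $f$ by $\lambda^{-1}f$), so that $f(u)=u$, the statement to establish is: the only positive fixed points of $f$ are the scalar multiples of $u$ if and only if the two players have no disjoint dominions in $\Gamma_u(f)$. It will be convenient to keep in mind the additive/logarithmic picture, writing $\hat f=\log\circ f\circ\exp$ for the associated monotone, additively homogeneous self-map of $\R^n$ fixing $v:=\log u$, whose germ at $v$ is what $\Gamma_u(f)$ encodes. A first observation I will record is that, $f$ being order preserving, the available-action families of $\Gamma_u(f)$ are upward closed in the subset order (if scaling down, resp.\ up, the coordinates outside $J$ leaves $f_i$ unchanged near $u$, then a fortiori so does scaling down, resp.\ up, the coordinates outside any larger set $J'\supseteq J$). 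Consequently a nonempty $\Delta\subseteq[n]$ is a dominion of a given player exactly when $\Delta$ itself is an available action of that player at each state $i\in\Delta$; this reduces the disjoint-dominion property to a statement about which coordinate maps of $f$, near $u$, fail to react to scalings supported off such sets.

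For the direction ``no disjoint dominions $\Rightarrow$ uniqueness'' I will argue by contraposition. Given a positive fixed point $u'$ of $f$ not proportional to $u$, set $\beta:=\max_i u'_i/u_i$ and $\alpha:=\min_i u'_i/u_i$ (so $\alpha<\beta$), and let $\Delta^+:=\{i:u'_i/u_i=\beta\}$ and $\Delta^-:=\{i:u'_i/u_i=\alpha\}$, which are nonempty and disjoint. I claim $\Delta^+$ is a \MAX dominion. Fix $i\in\Delta^+$; from $u'\leq\beta u$, monotonicity and homogeneity give $u'_i=f_i(u')\leq f_i(\beta u)=\beta f_i(u)=\beta u_i=u'_i$, hence $f_i(u')=f_i(\beta u)$. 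Since $u'_j<\beta u_j$ strictly for $j\notin\Delta^+$, a squeeze between $u'$, $D_t(\beta u)$ and $\beta u$, where $D_t$ scales the coordinates outside $\Delta^+$ by a factor $t$ in a left-neighbourhood of $1$, yields $f_i(D_t\beta u)=f_i(\beta u)$, i.e.\ $f_i(D_t u)=f_i(u)$; thus $f_i$ is insensitive near $u$ to scaling down the coordinates off $\Delta^+$, which is exactly what makes $\Delta^+$ an available \MAX action at $i$. As $i$ was arbitrary, $\Delta^+$ is a \MAX dominion (\MAX plays $\Delta^+$ everywhere on it), and the symmetric argument from $u'\geq\alpha u$ shows $\Delta^-$ is a \MIN dominion. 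They are disjoint, contradicting the hypothesis. I expect this part to be routine.

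For the converse, ``disjoint dominions $\Rightarrow$ non-uniqueness'', I must, given a \MIN dominion $\Delta_1$ and a disjoint \MAX dominion $\Delta_2$, construct a positive fixed point of $f$ not proportional to $u$. By the first remark, $\Delta_2$ is an available \MAX action at every $i\in\Delta_2$ and $\Delta_1$ an available \MIN action at every $i\in\Delta_1$. Using the \MAX action property I will produce a positive $x^-\geq u$ with $f(x^-)\geq x^-$ that is strictly larger than $u$ on $\Delta_2$ (take $u$ scaled up on $\Delta_2$); symmetrically the \MIN action property yields $x^+\leq u$ with $f(x^+)\leq x^+$, strictly smaller than $u$ on $\Delta_1$, and one checks $x^+\leq x^-$. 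The iterates $f^k(x^-)$ then increase and remain bounded above, hence converge to a positive fixed point of $f$, and $f^k(x^+)$ decrease to another; the crux is to certify that at least one of these limits has two coordinates with distinct ratios to $u$ — equivalently, that the bump imposed on $\Delta_2$ (or the dent on $\Delta_1$) is not washed out into a global rescaling along the orbit, with $\Delta_1$ keeping the $\Delta_1$-coordinates of the orbit from running up to the level of the $\Delta_2$-bump. I expect this to be the main obstacle: the dominion conditions are local (infinitesimal) statements about $f$ at $u$, so $x^{\pm}$ are a priori only approximate eigenvectors, and controlling the orbits requires exploiting the disjointness of $\Delta_1$ and $\Delta_2$ together with the nonexpansiveness of $f$ in Hilbert's projective metric. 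The cleanest way I anticipate organising this step is to pass to the recession (tangent) behaviour of $\hat f$ at $v$, which is genuinely positively homogeneous and hence scale-invariant in its insensitivities: the disjoint-dominion data of $\Gamma_u(f)$ then becomes disjoint-dominion data for the game $\Gamma_\infty$ of an auxiliary monotone homogeneous map, to which \Cref{thm:existence-multiplicative} applies, producing an unbounded slice space whose unbounded directions supply the required family of approximate eigenvectors and, in the limit, a genuine second positive eigenvector of $f$. The remaining verifications are order-theoretic bookkeeping parallel to the proof of \Cref{thm:existence-multiplicative}.
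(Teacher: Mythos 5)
Your first implication (a second eigenvector yields disjoint dominions) is correct and is essentially the paper's own argument: in additive coordinates the paper takes $I=\argmin$ and $J=\argmax$ of the nonconstant eigenvector and shows by the same squeeze that they are dominions of \MIN and \MAX respectively; your $\Delta^{\pm}$ are exactly these sets read through the exponential.

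The converse is where the proposal has a genuine gap, and you have correctly located it but not closed it. Work additively with $u=0$, $T(0)=0$, \MIN dominion $I$, \MAX dominion $J$, and a common $\varepsilon>0$ with $T_i(\varepsilon\unit_\cpt{I})=0$ for $i\in I$ and $T_j(-\varepsilon\unit_\cpt{J})=0$ for $j\in J$. Your sub-solution is $x^-=\varepsilon\unit_J$ (indeed $T(x^-)\geq x^-$), and the increasing orbit $T^k(x^-)$ is bounded (by nonexpansiveness in the sup norm it stays below $\varepsilon\unit$), so it converges to a fixed point $v^*$ --- but, as you say, nothing so far prevents $v^*=\varepsilon\unit$, i.e.\ a constant vector, and symmetrically for the orbit of $x^+$. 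The escape route you propose --- passing to a recession/tangent map of $T$ at $u$ and invoking \Cref{thm:existence-multiplicative} --- does not work: such a tangent map need not exist without semidifferentiability, the game $\Gamma_u(T)$ encodes higher-order local behaviour that a first-order tangent map does not see, and, most importantly, a nonconstant eigenvector of a tangent map does not produce a nonconstant eigenvector of $T$ itself (the paper stresses that the semidifferential criterion of Akian--Gaubert--Nussbaum is only \emph{sufficient}, which is precisely why it introduces $\Gamma_u$ instead). So that step, as written, is not a proof.

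The missing ingredient is to use the \emph{other} dominion to cap the orbit: the set $y=\varepsilon\unit_\cpt{I}$ is a super-solution ($T_i(y)=0=y_i$ on $I$ and $T_\ell(y)\leq\varepsilon=y_\ell$ off $I$), and since $J\subset\cpt{I}$ one has $x^-\leq y$, hence $x^-\leq v^*\leq y$, forcing $v^*_i=0$ on $I$ and $v^*_j=\varepsilon$ on $J$, so $v^*$ is nonconstant. (Your $x^+$ is the super-solution $-\varepsilon\unit_I$, which lies \emph{below} $x^-$ and therefore bounds nothing; translating it by $\varepsilon\unit$ gives $y$ and repairs the argument.) With that correction your monotone-iteration route becomes a valid and arguably more elementary alternative to the paper's proof of \Cref{thm:uniqueness-additive}, which instead prescribes the values $0$ on $I$ and $\varepsilon$ on $J$ and applies Brouwer's fixed point theorem to the induced self-map of $[0,\varepsilon]^L$ on the remaining coordinates $L=\Cpt{I\cup J}$.
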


This should be compared with a result established by Nussbaum and the first two authors \cite{AGN16}
in an infinite-dimensional setting.
This result relies on the notion of semidifferential.
Recall that $f$ is said to be semidifferentiable at point $u$ if there is a continuous and
positively homogeneous map $f'_u$ such that $f(u+h) = f(u) + f'_u(h) + o(\|h\|)$.
(This is similar to the definition of Fr{\'e}chet differentiability, however,
the semidifferential map $f'_u$ is not required to be linear which makes
the notion adapted to situations in which $f$ is nonsmooth.)
When specialized to the finite-dimensional case, Corollary~7.7 of \cite{AGN16} shows that
if $f$ is semidifferentiable at point $u$, then the uniqueness of the eigenvector of $f'_u$,
up to a scalar factor, entails the uniqueness of the eigenvector $u$, up to a scalar factor.
The latter condition is only sufficient.
The ``local game'' $\Gamma_u(f)$ may be thought of as a combinatorial refinement of
the semidifferential map $f'_u$, taking into account variations $f(u+h)$ of higher order
than $O(h)$, leading now to a necessary and sufficient condition.

\medskip
The paper is organized as follows.
We prove \Cref{thm:existence-multiplicative} in \Cref{sec:existence},
whereas \Cref{thm:dominion-uniqueness-mult} is proved
in \Cref{sec:uniqueness}.
To this end, we shall use ``logarithmic glasses'', considering the conjugate map
$T = \log \circ f\circ \exp$ instead of $f$.
We will see that $T$ can be thought of as the Shapley operator of a zero-sum game,
explaining the somehow unexpected occurrence of a game-theoretical condition
in the study of a nonlinear eigenproblem.

In \Cref{sec:algorithmic-aspects}, we show that the dominion condition can be
checked by solving reachability problems in directed {\em hypergraphs},
in which hyperedges connect {\em sets} of nodes.
This is in contrast with earlier works in Perron-Frobenius theory \cite{Nus89,GG04,CCHH10},
in which {\em graph} conditions were used to ensure the existence of a positive eigenvector.
Hypergraph conditions are tighter and considerably less demanding than graph conditions.
We note that the present hypergraph approach was initiated in \cite{AGH15}
in a more special setting (zero-sum games with {\em bounded} payments).

We apply in \Cref{applications} these results to concrete classes of maps.
These include nonlinear maps involving generalized means arising from mathematical biology
and matrix scaling problems, studied by Nussbaum in \cite{Nus89},
as well as nonnegative tensors.

Some of our results were announced in the conference paper \cite{AGH15-CDC}.

\section{Games, dominions and existence of eigenvectors}
\label{sec:existence}

\subsection{The additive setting}

The map $\log: \Rplus^n \to \R^n$, which applies the logarithm componentwise, is
a bijection between the standard positive cone $\Rplus^n$ and the space $\R^n$.
We denote by $\exp: \R^n \to \Rplus^n$ its inverse map.
These two maps are monotone (in the sense of preserving the order). 
Hence, a monotone homogeneous map $f$ on $\Rplus^n$ is conjugated to the map
$T = \log \circ f \circ \exp: \R^n \to \R^n$, which is monotone and {\em additively homogeneous}.
The latter property means that for all scalars $\alpha \in \R$ and all $x \in \R^n$,
we have $T(x + \alpha \unit) = T(x) + \alpha \unit$, where $\unit$ denotes
the unit vector of $\R^n$.
Then, through ``log glasses'', the nonlinear eigenproblem $f(u) = \lambda u$ is equivalent
to finding a pair $(\lambda,u) \in \R \times \R^n$ solution of the following equation,
known as the {\em ergodic equation}:
\begin{equation}
  \label{eq:ergodic-equation}
  T(u) = \lambda \unit + u .
\end{equation}
By analogy with the multiplicative case, we shall call $\lambda$ the eigenvalue of $T$,
which is unique, and $u$ an eigenvector.

Although the multiplicative and additive frameworks are equivalent, some tools are easier
to manipulate in one setting than in the other.
In particular, Hilbert's projective metric is replaced in $\R^n$ by {\em Hilbert's seminorm},
a.k.a.\ {\em Hopf's oscillation}:
\[
  \Hilbert{x} := \max_{1 \leq i \leq n} x_i - \min_{1 \leq i \leq n} x_i ,
  \quad x \in \R^n .
\]
Then, all the slice spaces $\Scal_\alpha^\beta(f)$ are bounded in Hilbert's projective
metric if and only if all the {\em additive slice spaces} of $T$, defined by
\[
  \Acal_\alpha^\beta(T) := \left\{x \in \R^n \mid \alpha \unit + x \leq T(x) \leq \beta \unit + x \right\} ,
  \quad \alpha, \beta \in \R ,
\]
are bounded in Hilbert's seminorm. 
In the sequel, we shall work with monotone additively homogeneous self-maps of $\R^n$,
leaving to the reader the immediate translation to the multiplicative framework. 

\subsection{Dominion condition}

We now fix a monotone additively homogeneous map $T: \R^n \to \R^n$ and define
a two-player stochastic game, denoted by $\Gamma_\infty(T)$, which coincides with the game
associated to the conjugate map $\exp \circ T \circ \log$ and defined in the introduction.
Precisely, the state space is $[n]$ and at each stage, if the current state is $i$, player \MIN
chooses a subset of states $I$ such that
\[
  \lim_{\alpha \to +\infty} T_i(\alpha \unit_\cpt{I}) < +\infty .
\]
Simultaneously, player \MAX chooses a subset of states $J$ such that
\[
  \lim_{\alpha \to -\infty} T_i(\alpha \unit_\cpt{J}) > -\infty .
\]
Then, the state at the next stage is chosen in $I \cap J$ with uniform probability.
We are only interested in the dynamics of the state, and therefore we do not need to
define a payoff function.

Let us make some observations.
First, since $T$ is monotone, the condition for $I$ to be in the action space of \MIN
in state $i$ is equivalent to the nondecreasing sequence
$\{ T_i(k \unit_\cpt{I}) \}_{k \in \N}$ being bounded.
Likewise, $J$ is in the action space of \MAX in state $i$ if and only if the nonincreasing
sequence $\{ T_i(-k \unit_\cpt{J}) \}_{k \in \N}$ is bounded.
The monotonicity of $T$ also implies that if a player can choose a set $I$
in a given state, then he can choose any set $K \supset I$.
Furthermore, in each state, the action spaces of the two players are never empty since
they contain the set of all states $[n]$.
Finally, observe that they do not contain the empty set.

For the game $\Gamma_\infty(T)$ to be properly defined, we need that for all sets $I$ and $J$
chosen in any state $i$ by \MIN and \MAX respectively, the set of possible next states,
$I \cap J$, be nonempty.
We next show that this is the case.
Suppose, by way of contradiction, that $I \cap J = \emptyset$.
Then we have $\unit - \unit_\cpt{J} = \unit_J \leq \unit_\cpt{I}$.
By monotonicity and additive homogeneity of $T$, we deduce that
$\alpha + T_i(-\alpha \unit_\cpt{J}) \leq T_i(\alpha \unit_\cpt{I})$
for all scalars $\alpha \geq 0$.
The latter inequality entails that $\lim_{\alpha \to +\infty} T_i(\alpha \unit_\cpt{I})$
and $\lim_{\alpha \to -\infty} T_i(\alpha \unit_\cpt{J})$ cannot be both finite,
a contradiction.

\medskip
A {\em dominion} of some player is a nonempty subset of states $\Delta \subset [n]$ such that,
from any initial position $i \in \Delta$, this player can force the state to remain
almost surely in $\Delta$ at each stage.
Equivalently, $\Delta$ is a dominion of some player in $\Gamma_\infty(T)$ if he can choose
in each state $i \in \Delta$ a set $I \subset [n] $ such that $I \cap J \subset \Delta$
for any admissible choice $J$ of the adversarial player.  
Note that, since $J = [n]$ is always an admissible choice for this other player,
$\Delta$ is a dominion of the former if and only if in each state $i \in \Delta$
he is allowed to choose the set $\Delta$.

We note that the notion of {\em contra-ergodic partition}, equivalent
to the notion of disjoint dominions for a special class of games, was introduced by
Gurvich and Lebedev in \cite{GL89} and further used by Boros, Elbassioni, Gurvich and Makino
in~\cite{BEGM10} for algorithmic purposes.
We also mention that Jurdszi\'nski, Paterson and Zwick introduced in \cite{JPZ08}
the notion of dominion to develop an algorithm to solve a class of combinatorial two-player
games called parity games.
In their setting, all the states in a dominion of a player are also required to be winning
for this player.
This condition is not relevant here, since the game has no payoff.

We can now state our main result, in the additive setting, from which
\Cref{thm:existence-multiplicative} and then
\Cref{cor:solvability-eigenproblem} are readily derived.
In this setting, a uniform perturbation (with respect to Hilbert's seminorm) of
a self-map $T$ of $\R^n$ is any map $G: \R^n \to \R^n$ for which $\Hilbert{G(x) - T(x)}$
is uniformly bounded, i.e., for which there exists a constant $\gamma > 0$ and,
for all $x \in \R^n$, real numbers $\alpha_x, \beta_x$ satisfying
$\beta_x - \alpha_x \leq \gamma$ and such that
$\alpha_x \unit + T(x) \leq G(x) \leq \beta_x \unit + T(x)$.

\begin{theorem}
  \label{thm:existence-additive}
  Let $T: \R^n \to \R^n$ be monotone and additively homogeneous.
  The following assertions are equivalent:
  \begin{enumerate}
    \item the two players do not have disjoint dominions in the game $\Gamma_\infty(T)$;
      \label{it:existence-i}
    \item all the additive slice spaces of $T$ are bounded in Hilbert's seminorm;
      \label{it:existence-ii}
    \item for all $g \in \R^n$, there exists $(\lambda,u) \in \R \times \R^n$
      such that $g+T(u) = \lambda \unit + u$;
      \label{it:existence-iii}
    \item for all $g \in \R^n$, the limit
      \[
        \lim_{k \to \infty} \left[ \frac{(g+T)^k(x)}{k} \right]_i
      \]
      exists for all $x \in \R^n$ and $i \in [n]$, and is independent of
      the choice of $x$ and $i$;
      \label{it:existence-iv}
    \item for all monotone additively homogeneous maps $G$ that are uniform perturbations
      of $T$, there exists $(\lambda,u) \in \R \times \R^n$ such that
      $G(u) = \lambda \unit + u$;
      \label{it:existence-v}
    \item for all monotone additively homogeneous maps $G$ that are uniform perturbations
      of $T$, the limit
      \[
        \lim_{k \to \infty} \left[ \frac{G^k(x)}{k} \right]_i
      \]
      exists for all $x \in \R^n$ and $i \in [n]$, and is independent of
      the choice of $x$ and $i$.
      \label{it:existence-vi}
  \end{enumerate}
\end{theorem}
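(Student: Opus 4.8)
The plan is to establish the cycle of implications
\ref{it:existence-i}~$\Rightarrow$~\ref{it:existence-ii}~$\Rightarrow$~\ref{it:existence-iii}~$\Rightarrow$~\ref{it:existence-v}~$\Rightarrow$~\ref{it:existence-vi}~$\Rightarrow$~\ref{it:existence-iv}~$\Rightarrow$~\ref{it:existence-i}.
Four of these links are cheap. The implication \ref{it:existence-ii}~$\Rightarrow$~\ref{it:existence-iii} is the additive form of \cite[Thm.~3]{GG04}, and \ref{it:existence-iii}~$\Rightarrow$~\ref{it:existence-v} is \cite[Thm.~5.1]{Hoc17} (which additionally gives \ref{it:existence-v}~$\Rightarrow$~\ref{it:existence-ii}). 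For \ref{it:existence-v}~$\Rightarrow$~\ref{it:existence-vi}, if $G(u)=\lambda\unit+u$ then $G^k(u)=k\lambda\unit+u$ by additive homogeneity, and monotonicity plus additive homogeneity give $\min_j(x_j-u_j)\,\unit+G^k(u)\leq G^k(x)\leq\max_j(x_j-u_j)\,\unit+G^k(u)$, whence $[G^k(x)/k]_i\to\lambda$ for every $x$ and $i$. For \ref{it:existence-vi}~$\Rightarrow$~\ref{it:existence-iv}, observe that for any $g\in\R^n$ the map $x\mapsto g+T(x)$ is monotone, additively homogeneous, and a uniform perturbation of $T$ (take $\alpha_x=\min_ig_i$, $\beta_x=\max_ig_i$), so \ref{it:existence-iv} is a special case of \ref{it:existence-vi}. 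It thus remains to prove \ref{it:existence-iv}~$\Rightarrow$~\ref{it:existence-i} and \ref{it:existence-i}~$\Rightarrow$~\ref{it:existence-ii}, both by contraposition, and both resting on one elementary fact: if $\Delta$ is a dominion of \MIN, then, since \MIN may choose $\Delta$ from every state $i\in\Delta$ (equivalently $\sup_{k\in\N}T_i(k\unit_{\cpt{\Delta}})<+\infty$ for $i\in\Delta$), monotonicity and additive homogeneity yield a constant $C$ with $T_i(x)\leq\max_{j\in\Delta}x_j+C$ for all $x\in\R^n$, $i\in\Delta$; dually, a dominion $\Delta$ of \MAX yields a constant $c$ with $T_i(x)\geq\min_{j\in\Delta}x_j+c$ for all $x\in\R^n$, $i\in\Delta$.

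For \ref{it:existence-iv}~$\Rightarrow$~\ref{it:existence-i}, assume \MIN has a dominion $\Delta_1$ and \MAX a disjoint dominion $\Delta_2$; let $C$ and $c$ be the constants just described, attached to $\Delta_1$ and $\Delta_2$, fix $N>C-c$, and put $g:=-N\unit_{\Delta_1}$, $G:=g+T$. With $y^{(k)}:=G^k(0)$: for $i\in\Delta_2$, $y^{(k+1)}_i=T_i(y^{(k)})\geq\min_{j\in\Delta_2}y^{(k)}_j+c$, so $\min_{j\in\Delta_2}y^{(k)}_j\geq kc$ by induction and $\liminf_k[G^k(0)/k]_i\geq c$; symmetrically, for $i\in\Delta_1$, $y^{(k+1)}_i=T_i(y^{(k)})-N\leq\max_{j\in\Delta_1}y^{(k)}_j+C-N$, so $\max_{j\in\Delta_1}y^{(k)}_j\leq k(C-N)$ and $\limsup_k[G^k(0)/k]_i\leq C-N<c$. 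Hence, for this $g$, the limit in \ref{it:existence-iv} either fails to exist or is not independent of $i$, contradicting \ref{it:existence-iv}.

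For \ref{it:existence-i}~$\Rightarrow$~\ref{it:existence-ii}, suppose some additive slice space $\Acal_\alpha^\beta(T)$ is unbounded in Hilbert's seminorm; pick $x^{(m)}\in\Acal_\alpha^\beta(T)$ with $\Hilbert{x^{(m)}}\to\infty$. As $\Acal_\alpha^\beta(T)$ is stable under adding multiples of $\unit$, normalize $\min_ix^{(m)}_i=0$ and, along a subsequence, assume every coordinate is bounded or tends to $+\infty$. Let $B$ be the set of bounded coordinates ($B\neq\emptyset$, since it contains an argmin) and $\Delta_2:=\cpt{B}$ ($\Delta_2\neq\emptyset$, since $\max_ix^{(m)}_i\to\infty$). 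For $i\in B$ and fixed $t>0$, $x^{(m)}\geq t\,\unit_{\cpt{B}}$ once $m$ is large, so $T_i(t\,\unit_{\cpt{B}})\leq T_i(x^{(m)})\leq\beta+x^{(m)}_i$ stays bounded in $m$; letting $t\to+\infty$ shows $B$ is a dominion of \MIN. Next set $z^{(m)}:=x^{(m)}-(\max_ix^{(m)}_i)\,\unit\in\Acal_\alpha^\beta(T)$ and refine the subsequence so that every coordinate of $z^{(m)}$ is bounded or tends to $-\infty$; then $z^{(m)}_i\to-\infty$ for $i\in B$, whereas the maximum of $x^{(m)}$ is eventually attained in $\Delta_2$, so $\max_{i\in\Delta_2}z^{(m)}_i=0$ eventually, and therefore the set $\Delta_3\subseteq\Delta_2$ of coordinates along which $z^{(m)}$ stays bounded is nonempty. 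The dual computation — for $i\in\Delta_3$ and fixed $t<0$, $z^{(m)}\leq t\,\unit_{\cpt{\Delta_3}}$ for $m$ large, hence $T_i(t\,\unit_{\cpt{\Delta_3}})\geq T_i(z^{(m)})\geq\alpha+z^{(m)}_i$ stays bounded below, so $\lim_{t\to-\infty}T_i(t\,\unit_{\cpt{\Delta_3}})>-\infty$ — shows $\Delta_3$ is a dominion of \MAX. Since $\Delta_3\subseteq\Delta_2=\cpt{B}$, the dominions $B$ and $\Delta_3$ are disjoint, contradicting \ref{it:existence-i}.

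I expect the main obstacle to be \ref{it:existence-i}~$\Rightarrow$~\ref{it:existence-ii}: extracting \emph{two} disjoint dominions forces two successive normalizations and subsequence extractions, and one must carefully track which coordinates remain controlled at each stage — the most delicate point being the nonemptiness of $\Delta_3$, which hinges on the maximum of $x^{(m)}$ being eventually attained at a coordinate tending to $+\infty$. By contrast, \ref{it:existence-iv}~$\Rightarrow$~\ref{it:existence-i} is essentially immediate once the one-sided bounds attached to dominions are in place, the only genuine decision being the choice of the penalization level $N$.
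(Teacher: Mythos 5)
Your proposal is correct and follows essentially the same route as the paper: the known equivalences \ref{it:existence-ii}$\,\Leftrightarrow\,$\ref{it:existence-iii}$\,\Leftrightarrow\,$\ref{it:existence-v} are delegated to \cite{GG04,Hoc17}, the easy links are handled as in the paper, and the two substantive implications \ref{it:existence-i}$\,\Rightarrow\,$\ref{it:existence-ii} (extracting disjoint dominions of \MIN and \MAX from an unbounded sequence in a slice space via two successive normalizations and subsequence extractions) and \ref{it:existence-iv}$\,\Rightarrow\,$\ref{it:existence-i} (a perturbation supported on one player's dominion forcing the Ces\`aro limits to separate) are proved by the same contrapositive arguments. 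The only cosmetic difference is that in \ref{it:existence-iv}$\,\Rightarrow\,$\ref{it:existence-i} you penalize \MIN's dominion with $-N\unit_{\Delta_1}$ whereas the paper boosts \MAX's dominion with $+s\unit_J$; the two are mirror images of the same estimate.
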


Before giving the proof, let us illustrate the result.

\begin{example}
  \label{ex:running-example}
  Let $T: \R^3 \to \R^3$ be the monotone additively homogeneous map defined by
  \begin{equation}
    \label{eq:running-ex}
    T(x) =
    \begin{pmatrix}
      \frac{1}{2} (x_1 + x_2) \, \wedge \, \left( -1 + \frac{1}{2} (x_1 + x_3) \right) \\
      \left( 1 + \frac{1}{2} (x_1 + x_3) \right) \, \wedge \, \left( \frac{1}{2} (x_1 + x_2) \,
      \vee \, (-3 + x_3 ) \right) \\
      ( 1 + \frac{1}{2} (x_1 + x_3) ) \, \vee \, ( 1 + x_3 )
    \end{pmatrix} ,
  \end{equation}
  where $\alpha \wedge \beta = \min \{\alpha, \beta\}$ for any $\alpha, \beta \in \R$.

  The game $\Gamma_\infty(T)$ has $3$ states.
  In state 1 we have, for instance, $T_1( \alpha \unit_{\{3\}} ) = -1 + \alpha / 2$ if
  $\alpha \leq 2$ and $T_1( \alpha \unit_{\{3\}} ) = 0$ otherwise.
  Hence $\lim_{\alpha \to +\infty} T_1( \alpha \unit_{\{3\}} ) < +\infty$ and
  $\lim_{\alpha \to -\infty} T_1( \alpha \unit_{\{3\}} ) = - \infty$.
  So the set $\{1,2\}$ is an admissible action for player \MIN
  but not for player \MAX.
  More generally, the actions of player \MIN are
  \begin{itemize}
    \item in state 1: $\{1,2\}$, $\{1,3\}$, $\{1,2,3\}$;
    \item in state 2: $\{1,3\}$, $\{1,2,3\}$;
    \item in state 3: $\{1,3\}$, $\{1,2,3\}$.
  \end{itemize}
  Hence the dominions of \MIN are $\{1,3\}$ and $\{1,2,3\}$.
  As for player \MAX, his actions are
  \begin{itemize}
    \item in state 1: $\{1,2,3\}$;
    \item in state 2: $\{1,3\}$, $\{1,2,3\}$;
    \item in state 3: $\{3\}$, $\{1,3\}$, $\{2,3\}$, $\{1,2,3\}$.
  \end{itemize}
  Hence the dominions of \MAX are $\{3\}$ and $\{1,2,3\}$.

  Consequently, the players do not have disjoint dominions in $\Gamma_\infty(T)$.
  We deduce from \Cref{thm:existence-additive} that all the additive slice spaces of $T$
  are bounded in Hilbert's seminorm.
  In particular, the ergodic equation~\labelcref{eq:ergodic-equation} is solvable
  (check that $T(u) = u$ with $u = \transpose{(0,0,2)}$).
\end{example}

\subsection{Proof of \Cref{thm:existence-additive}}

We already know that \ref{it:existence-ii}~$\Leftrightarrow$~\ref{it:existence-iii}~$\Leftrightarrow$~\ref{it:existence-v}
(see \cite[Thm.~9]{GG04} and \cite[Thm.~5.1]{Hoc17}).
The implication \ref{it:existence-v}~$\Rightarrow$~\ref{it:existence-vi}
(as well as \ref{it:existence-iii}~$\Rightarrow$~\ref{it:existence-iv}) is
an easy observation, which is made in particular in \cite[Sec.~2.2]{GG04}.
The implication \ref{it:existence-vi}~$\Rightarrow$~\ref{it:existence-iv} is trivial.
Hence it suffices to show that \ref{it:existence-i}~$\Rightarrow$~\ref{it:existence-ii} and
\ref{it:existence-iv}~$\Rightarrow$~\ref{it:existence-i}.
We next prove these implications by showing their contrapositive.

\subsubsection*{Proof of \ref{it:existence-i}~$\Rightarrow$~\ref{it:existence-ii}}

Suppose that there is an additive slice space $\Acal_\alpha^\beta(T)$ which is unbounded
in Hilbert's seminorm.
So there exists a sequence $\{ u^k \}_{k \in \N}$ in $\Acal_\alpha^\beta(T)$ such that
$\lim_{k \to \infty} \Hilbert{u^k} = \infty$.
For all integers $k$ we may assume, up to the addition by a vector proportional to $\unit$,
that $\min_{\ell \in [n]} u^k_\ell = 0$ , and we let
$\tau^k = \max_{\ell \in [n]} u^k_\ell = \Hilbert{u^k}$.

Let $I$ be a subset of $[n]$ with maximal cardinality among all the subsets
$L \subset [n]$ for which there exists a subsequence $ \{ u^{n_k} \}_{k \in \N}$ such that
$\{ u^{n_k}_\ell \}_{k \in \N}$ is bounded for all $\ell \in L$.
Such a set $I$ is nonempty since $\min_{\ell \in [n]} u^k_\ell = 0$ for all $k \in \N$.
Furthermore $I \neq [n]$ since $\tau^k = \Hilbert{u^k}$ tends to infinity.

Let $\phi: \N \to \N$ be a strictly increasing function such that $\{ u^{\phi(k)} \}_{k \in \N}$
is a subsequence as described above for $I$.
Then $\lim_{k \to \infty} u^{\phi(k)}_\ell = +\infty$ for all $\ell \notin I$.
Otherwise, there would be some $j \notin I$ such that a subsequence of
$\{ u^{\phi(k)}_j \}_k$ is bounded.
Hence, there would be a subsequence $\{ u^{n_k} \}_k$ such that
$\{ u^{n_k}_\ell \}_k$ is bounded for all $\ell \in I \cup \{j\}$, a contradiction
with the maximality of the cardinality of $I$.

The set $I$ being fixed, we can show, following the same lines, that there exists
a set $J \subset [n]$ for which there is a strictly increasing function $\psi: \N \to \N$
such that $\{ \tau^{\phi(\psi(k))} - u^{\phi( \psi(k) )}_j \}_k$ is bounded for all
$j \in J$ and $\tau^{\phi( \psi(k) )} - u^{\phi( \psi(k) )}_\ell$ tends to infinity
for all $\ell \notin J$.
Since $\lim_{k \to \infty} \tau^{\phi(k)} = \infty$, such a set $J$ is nonempty,
and since $\{ u^{\phi(k)}_i \}_k$ is bounded for all $i \in I$, we have $I \cap J = \emptyset$.

For the sake of simplicity, assume that $\psi \circ \phi$ is the identity map, so that
\begin{itemize}
  \item for all $i \in I$, the sequence $\{ u^k_i \}_k$ is bounded;
  \item for all $j \in J$, the sequence $\{ \tau^k - u^k_j \}_k$ is bounded;
  \item for all $\ell \notin I \cup J$, the sequences $\{ u^k_\ell \}_k$ and
    $\{ \tau^k - u^k_\ell \}_k$ tend to $+\infty$.
\end{itemize}

\medskip
For all $k \in \N$, let us denote $\rho^k = \min_{\ell \notin I} u^k_\ell$ and
$\sigma^k = \max_{\ell \notin J} u^k_\ell$.
By construction, we have
$\lim_{k \to \infty} \rho^k = \lim_{k \to \infty} \tau^k - \sigma^k = +\infty$.
We also have, for all integers $k$,
\[
  \rho^k \unit_\cpt{I} \leq u^k \quad \text{and} \quad 
  (\tau^k - \sigma^k) \unit_\cpt{J} \leq \tau^k \unit - u^k .
\]

Let $M > 0$ be a joint upper bound for the sequences $\{ u^k_i \}_k$ with $i \in I$,
and $\{ \tau^k - u^k_j \}_k$ with $j \in J$.
Then we deduce from the previous observations that, for all indices $i \in I$,
\[
  T_i(\rho^k \unit_\cpt{I}) \leq T_i(u^k) \leq \beta + u_i^k \leq \beta + M .
\]
This proves that in the game $\Gamma_\infty(T)$, \MIN can choose the set $I$
in each state $i \in I$.
Hence, $I$ is a dominion of \MIN.
Likewise, for all indices $j \in J$, we have
\[
  T_j\left( (\sigma^k - \tau^k) \unit_\cpt{J} \right) \geq
  T_j(u^k - \tau^k \unit) \geq \alpha + u^k_j - \tau^k \geq \alpha - M ,
\]
which proves that $J$ is a dominion of \MAX, and thus that the players do have disjoint
dominions in the game $\Gamma_\infty(T)$.
\qed

\subsubsection*{Proof of \ref{it:existence-iv}~$\Rightarrow$~\ref{it:existence-i}}

Assume that there exist disjoint dominions, $I$ for player \MIN and $J$ for player \MAX,
in the game $\Gamma_\infty(T)$.
Then, by definition of the game, there exist constants $\alpha, \beta \in \R$ such that
\begin{alignat}{2}
  \label{eq:dominion-MIN}
  & T_i(k \unit_\cpt{I}) \leq \beta , && \quad \forall i \in I , \quad \forall k \geq 0 , \\
  \label{eq:dominion-MAX}
  & k + \alpha \leq T_j(k \unit_J) , && \quad \forall j \in J , \quad \forall k \geq 0 .
\end{alignat}
Moreover, we can choose $\alpha$ and $\beta$ so that $\alpha \leq T_\ell(0) \leq \beta$
for all $\ell \in [n]$.

Let $s \in \N$ and define the map $S: x \mapsto s \unit_J + T(x)$.
We next show by induction that, for all integers $k$,
\begin{equation}
  \label{eq:bound-mean-value}
  k (s \unit_J + \alpha \unit) \leq S^k(0) \leq k (s \unit_\cpt{I} + \beta \unit) .
\end{equation}
For $k=0$, this is trivial.
Now assume that \labelcref{eq:bound-mean-value} holds for some $k \in \N$.
Then we have
\begin{equation*}
  \begin{split}
    S^{k+1}(0) = s \unit_J + T( S^k(0) )
    & \leq s \unit_\cpt{I} + T(k s \unit_\cpt{I}) + k \beta \unit , \\
    & \leq s \unit_\cpt{I} + (k s \unit_\cpt{I} + \beta \unit) + k \beta \unit , \\
    & = (k+1) ( s \unit_\cpt{I} + \beta \unit) .
  \end{split}
\end{equation*}
The first inequality follows from the fact that $\unit_J \leq \unit_\cpt{I}$ and from
the second inequality in \labelcref{eq:bound-mean-value}.
The second inequality follows from \labelcref{eq:dominion-MIN} for the coordinates $i \in I$
and from the fact that $T_\ell (k s \unit_\cpt{I}) \leq T_\ell (k s \unit) = k s + T_\ell(0)$
for the coordinates $\ell \notin I$.
Likewise, we have
\begin{equation*}
  \begin{split}
    S^{k+1}(0) = s \unit_J + T( S^k(0) )
    & \geq s \unit_J + T(k s \unit_J) + k \alpha \unit , \\
    & \geq s \unit_J + (k s \unit_J + \alpha \unit) + k \alpha \unit , \\
    & = (k+1) ( s \unit_J + \alpha \unit) ,
  \end{split}
\end{equation*}
where the first inequality follows from the induction hypothesis \labelcref{eq:bound-mean-value} and
the second inequality, from \labelcref{eq:dominion-MAX}.
Thus \labelcref{eq:bound-mean-value} holds for $k+1$ which concludes the proof by induction.

Since the inequalities \labelcref{eq:bound-mean-value} hold for all integers $k$, we deduce that
\[
  s \unit_J + \alpha \unit \leq \liminf_{k \to \infty} \frac{S^k(0)}{k} \leq
  \limsup_{k \to \infty} \frac{S^k(0)}{k} \leq s \unit_\cpt{I} + \beta \unit .
\]
So, if $\chi$ denotes an arbitrary accumulation point of
the sequence $\{ k^{-1} S^k(0)\}_{k \geq 1}$, we have
\begin{alignat*}{2}
  & \alpha \leq \chi_i \leq \beta , && \quad \forall i \in I , \\
  & s + \alpha \leq \chi_j \leq s + \beta , && \quad \forall j \in J .
\end{alignat*}
Choosing $s > \beta - \alpha$, we conclude that $\chi$ cannot be a constant vector,
and so, condition \labelcref{it:existence-iv} cannot be satisfied.
\qed

\subsection{The special case of convex maps}
\label{sec:convexity-slice-spaces}

In this subsection, we consider a monotone additively homogeneous operator
$T: \R^n \to \R^n$ which is convex, meaning that every coordinate function of $T$ is convex.
In this particular case, we show that \Cref{thm:existence-additive} simplifies.
More precisely, the two-player game $\Gamma_\infty(T)$ can be replaced by
a one-player game in which the actions of \MIN are discarded while the actions of \MAX
are essentially the same as in $\Gamma_\infty(T)$.

Before introducing this one-player game, let us give the following definition, which applies
to any real map $g: \R^n \to \R$:
we call {\em support} of $g$, and we denote it by $\supp(g)$, the set of indices $i \in [n]$
such that $g$ depends effectively on $x_i$, in the sense that there is no map
$h: \R^{n-1} \to \R$ such that $g(x) = h(x_1,\dots,x_{i-1},x_{i+1},\dots,x_n)$.
The next lemma provides a characterization of the support of a convex map $g$
that is monotone and additively homogeneous, i.e., that satisfies
$x \leq y \implies g(x) \leq g(y)$ for all $x, y \in \R^n$ and $g(x + \alpha \unit) = 
g(x) + \alpha$ for all $x \in \R^n$ and $\alpha \in \R$.

\begin{lemma}[{\cite[Prop.~2]{GG04}}]
  Let $g: \R^n \to \R$ be convex, monotone and additively homogeneous.
  An index $i \in [n]$ is contained in $\supp(g)$, the support of $g$, if and only if
  $\lim_{\alpha \to +\infty} g(\alpha \unit_{\{i\}}) = +\infty$.
  \label{lem:support}
\end{lemma}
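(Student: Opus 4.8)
The plan is to treat the two implications separately; the content of the lemma lies in a one-variable convexity argument, while the ``if'' direction and a final transfer step are routine.

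First I would prove that $\lim_{\alpha\to+\infty} g(\alpha\unit_{\{i\}}) = +\infty$ implies $i\in\supp(g)$, by contraposition and without using convexity or monotonicity. If $i\notin\supp(g)$, then by definition there is a map $h\colon\R^{n-1}\to\R$ with $g(x)=h(x_1,\dots,x_{i-1},x_{i+1},\dots,x_n)$ for all $x$; evaluating at $x=\alpha\unit_{\{i\}}$ gives $g(\alpha\unit_{\{i\}})=h(0,\dots,0)=g(0)$ for every $\alpha\in\R$, so the limit is the finite number $g(0)$ and in particular is not $+\infty$.

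For the converse, assume $i\in\supp(g)$. Unwinding the definition of the support, there are vectors $y,z\in\R^n$ with $y_j=z_j$ for all $j\neq i$ and $g(y)\neq g(z)$; necessarily $y_i\neq z_i$, and after exchanging the two vectors we may assume $y_i<z_i$, whence monotonicity forces $g(y)<g(z)$, so with $x:=y$ and $\delta:=z_i-y_i>0$ we have $g(x+\delta\unit_{\{i\}})>g(x)$. I would then consider the one-variable function $\phi\colon\R\to\R$, $\phi(t):=g(x+t\unit_{\{i\}})$, which is convex, being the restriction of the convex map $g$ to the line $t\mapsto x+t\unit_{\{i\}}$, and nondecreasing, by monotonicity of $g$. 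From $\phi(\delta)>\phi(0)$ and the monotonicity of the divided differences of a convex function one obtains, for every $t>\delta$,
\[
  \phi(t)\;\geq\;\phi(\delta)+\frac{\phi(\delta)-\phi(0)}{\delta}\,(t-\delta),
\]
and since the slope $\bigl(\phi(\delta)-\phi(0)\bigr)/\delta$ is strictly positive, the right-hand side tends to $+\infty$; hence $\lim_{t\to+\infty}g(x+t\unit_{\{i\}})=+\infty$.

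It then remains to pass from the base point $x$ to the origin. Setting $c:=\max_{j\in[n]}|x_j|$, we have $t\unit_{\{i\}}\geq x+t\unit_{\{i\}}-c\unit$ for all $t$, so monotonicity together with additive homogeneity of $g$ give $g(t\unit_{\{i\}})\geq g(x+t\unit_{\{i\}})-c$, whence $\lim_{\alpha\to+\infty}g(\alpha\unit_{\{i\}})=+\infty$, as desired. The only genuinely delicate point is the convexity bootstrap in the previous paragraph: it is precisely what fails for concave maps (for instance $g(x_1,x_2)=\min(x_1,x_2)$ depends on $x_1$ yet $g(\alpha\unit_{\{1\}})$ stays bounded as $\alpha\to+\infty$), so convexity cannot be dropped, and some care is needed to turn ``$g$ is non-constant in the $i$-th variable along one line'' into the much stronger conclusion that it grows without bound.
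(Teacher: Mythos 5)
Your proof is correct. The paper gives no proof of this lemma---it is imported verbatim from \cite[Prop.~2]{GG04}---but your argument is complete and is essentially the standard one: the ``only if'' direction is the trivial contrapositive, and the ``if'' direction correctly combines the extraction of two points differing only in the $i$th coordinate, the three-chord (divided-difference) inequality for the convex nondecreasing restriction $\phi(t)=g(x+t\unit_{\{i\}})$, and the final transfer to the origin via monotonicity and additive homogeneity.
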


We then deduce a characterization of the set of actions of player \MIN in $\Gamma_\infty(T)$.

\begin{corollary}
  \label{cor:action-MIN-convex}
  Let $T: \R^n \to \R^n$ be convex, monotone and additively homogeneous.
  Then, in any state $i \in [n]$, a set $I$ is an action of player \MIN in $\Gamma_\infty(T)$
  if and only if $I \supset \supp(T_i)$.
\end{corollary}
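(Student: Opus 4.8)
The plan is to combine the definition of \MIN's action set in the game $\Gamma_\infty(T)$ with the support criterion of \Cref{lem:support}. The preliminary remark is that each coordinate function $g := T_i$ inherits from $T$ the properties of being convex, monotone and additively homogeneous, so \Cref{lem:support} applies to $g$. Also, by definition, $I$ is an action of \MIN in state $i$ precisely when $\lim_{\alpha \to +\infty} T_i(\alpha \unit_{\cpt{I}}) < +\infty$.

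First I would dispatch the implication $I \supset \supp(T_i) \Rightarrow I$ is an action of \MIN. If $I$ contains $\supp(T_i)$, then $T_i$ does not depend on any coordinate indexed by $\cpt{I}$; changing those coordinates one at a time yields $T_i(\alpha \unit_{\cpt{I}}) = T_i(0)$ for every $\alpha \in \R$, so the limit above equals the finite number $T_i(0)$. This direction uses only the elementary fact that independence from each single variable of a set entails joint independence from that set, and in particular neither convexity nor homogeneity.

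For the converse, I would assume that $I$ is an action of \MIN, i.e., $\lim_{\alpha \to +\infty} T_i(\alpha \unit_{\cpt{I}}) < +\infty$, and fix an arbitrary $j \in \cpt{I}$. From $0 \leq \alpha \unit_{\{j\}} \leq \alpha \unit_{\cpt{I}}$ for $\alpha \geq 0$ and the monotonicity of $T_i$ we get $T_i(\alpha \unit_{\{j\}}) \leq T_i(\alpha \unit_{\cpt{I}})$, hence $\lim_{\alpha \to +\infty} T_i(\alpha \unit_{\{j\}}) < +\infty$; by \Cref{lem:support} this forces $j \notin \supp(T_i)$. Since $j \in \cpt{I}$ was arbitrary, $\cpt{I} \cap \supp(T_i) = \emptyset$, that is, $\supp(T_i) \subset I$.

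The only step carrying genuine content is this converse, and convexity is indispensable exactly at the invocation of \Cref{lem:support}: for a non-convex monotone additively homogeneous map such as $x \mapsto x_1 \wedge x_2$, the index $1$ belongs to the support although $\alpha \mapsto T_i(\alpha \unit_{\{1\}})$ remains bounded, so the claimed equivalence would break down. Consequently the main obstacle is no more than the bookkeeping needed to see that \Cref{lem:support} applies to each $T_i$; once that is in place the argument reduces to a one-line monotonicity comparison.
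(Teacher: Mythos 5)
Your proof is correct and follows essentially the same route as the paper: the forward direction via joint independence of $T_i$ from the coordinates in $\cpt{I}$, and the converse via the monotonicity comparison $T_i(\alpha \unit_{\{j\}}) \leq T_i(\alpha \unit_{\cpt{I}})$ combined with \Cref{lem:support}. The added remark on where convexity is indispensable is accurate but not needed for the argument.
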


\begin{proof}
  Let $i \in [n]$ and $I \subset [n]$.
  If $I \supset \supp(T_i)$, then $\cpt{I} \subset \cpt{\supp(T_i)}$, and so
  $T_i(\alpha \unit_\cpt{I})$ is independent of $\alpha$, which implies that
  $I$ can be chosen by \MIN in the game $\Gamma_\infty(T)$ when in state $i$.
  Conversely, assume that $I$ can be chosen by \MIN in $\Gamma_\infty(T)$ when in state $i$.
  By monotonicity of $T$ we have, for all $j \in \cpt{I}$ and all positive scalars $\alpha$,
  $T_i(\alpha \unit_{\{j\}}) \leq T_i(\alpha \unit_\cpt{I})$.
  Since the right-hand side of the latter inequality is bounded by a constant
  independent of $\alpha$, we deduce from \Cref{lem:support} that $j \notin \supp(T_i)$.
  This yields that $\supp(T_i) \subset I$.
\end{proof}

\medskip
We now define a one-player game $\breve{\Gamma}_\infty(T)$ as follows.
The state space is $[n]$ and when in state $i$ the player, called \MAX, chooses
a subset of states $J$ such that
\[
  J \subset \supp(T_i) \quad \text{and} \quad
  \lim_{\alpha \to -\infty} T_i(\alpha \unit_\cpt{J}) > -\infty .
\]
Once this set is selected, the next state is chosen in $J$ with uniform probability.
For the same reason as with $\Gamma_\infty(T)$, we do not need to define a payoff function.

The game $\breve{\Gamma}_\infty(T)$ is well defined since in any state $i \in [n]$, the set of
actions of player \MAX is nonempty (it contains the action $J = \supp(T_i)$)
and does not contain the empty set.
Furthermore, we have the following straightforward connection between the actions of \MAX
in $\Gamma_\infty(T)$ and in $\breve{\Gamma}_\infty(T)$.

\begin{lemma}
  Let $T: \R^n \to \R^n$ be convex, monotone and additively homogeneous.
  Any action of player \MAX in $\breve{\Gamma}_\infty(T)$ is also an action of \MAX
  in $\Gamma_\infty(T)$.
  Conversely, for every state $i \in [n]$, if a subset of states $J$ is an action
  of \MAX in $\Gamma_\infty(T)$, then $J \cap \supp(T_i)$ is an action of \MAX
  in $\breve{\Gamma}_\infty(T)$.
  \label{lem:actions-MAX-convex}
  \qed
\end{lemma}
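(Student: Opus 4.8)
The plan is to verify the two inclusions directly from the definitions of $\Gamma_\infty(T)$ and $\breve{\Gamma}_\infty(T)$; this is the ``straightforward'' connection announced in the statement, and no substantial ingredient beyond \Cref{cor:action-MIN-convex} is needed.

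For the first assertion, I would simply observe that if $J$ is an action of \MAX in $\breve{\Gamma}_\infty(T)$ at a state $i$, then by definition $J$ is nonempty, $J \subset \supp(T_i)$, \emph{and} $\lim_{\alpha \to -\infty} T_i(\alpha \unit_\cpt{J}) > -\infty$. Discarding the membership requirement $J \subset \supp(T_i)$, the remaining two conditions are exactly what defines an action of \MAX in $\Gamma_\infty(T)$ at state $i$. So this direction is immediate.

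For the converse, fix a state $i$ and an action $J$ of \MAX in $\Gamma_\infty(T)$, i.e.\ a nonempty set with $\lim_{\alpha \to -\infty} T_i(\alpha \unit_\cpt{J}) > -\infty$, and set $J' := J \cap \supp(T_i)$. Three things must be checked: that $J' \subset \supp(T_i)$ (trivial), that $J' \neq \emptyset$, and that $\lim_{\alpha \to -\infty} T_i(\alpha \unit_\cpt{J'}) > -\infty$. For nonemptiness I would invoke \Cref{cor:action-MIN-convex}: since $T$ is convex, $\supp(T_i)$ is an admissible action of \MIN at state $i$ in $\Gamma_\infty(T)$, and that game has the property (established earlier in this section) that the intersection of any \MIN action with any \MAX action chosen in the same state is nonempty; applied to $\supp(T_i)$ and $J$ this gives $J' \neq \emptyset$. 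For the limit condition, the key point is the elementary set identity $\cpt{J'} \cap \supp(T_i) = \supp(T_i) \setminus J = \cpt{J} \cap \supp(T_i)$, which says that the vectors $\alpha \unit_\cpt{J'}$ and $\alpha \unit_\cpt{J}$ agree on every coordinate indexed by $\supp(T_i)$. Since $T_i$ does not depend effectively on any coordinate outside $\supp(T_i)$, it follows that $T_i(\alpha \unit_\cpt{J'}) = T_i(\alpha \unit_\cpt{J})$ for every $\alpha \in \R$, so the two limits coincide and the right-hand one is $> -\infty$ by hypothesis. Hence $J'$ is an action of \MAX in $\breve{\Gamma}_\infty(T)$ at state $i$.

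I do not expect a genuine obstacle: the statement is elementary once the definitions are unwound. The only points deserving a line of care are the nonemptiness of $J' = J \cap \supp(T_i)$ --- for which the cleanest route is the one above via \Cref{cor:action-MIN-convex} rather than a direct computation, although one could alternatively note that $J' = \emptyset$ would force $\supp(T_i) \subset \cpt{J}$ and hence $T_i(\alpha\unit_\cpt{J}) = T_i(0) + \alpha \to -\infty$, contradicting admissibility of $J$ --- and the bookkeeping of complements in $\cpt{J'} \cap \supp(T_i) = \cpt{J} \cap \supp(T_i)$, together with the standard fact (implicit already in \Cref{lem:support}) that a monotone additively homogeneous coordinate function genuinely depends only on the coordinates lying in its support.
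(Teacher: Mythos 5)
Your proof is correct. The paper states this lemma without proof (it is marked as straightforward), and your argument is exactly the intended unwinding of the definitions: the first inclusion is immediate, and for the converse both the nonemptiness of $J\cap\supp(T_i)$ (via \Cref{cor:action-MIN-convex} and the well-posedness of $\Gamma_\infty(T)$, or by your direct computation) and the equality $T_i(\alpha\unit_{\cpt{J'}})=T_i(\alpha\unit_{\cpt{J}})$ coming from $\cpt{J'}\cap\supp(T_i)=\cpt{J}\cap\supp(T_i)$ are handled correctly.
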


The notion of dominion in $\breve{\Gamma}_\infty(T)$ is defined as in $\Gamma_\infty(T)$.
In particular a set $\Delta$ is a dominion of \MAX if in any state $i \in \Delta$ he can
choose a set $J \subset \Delta$.
\Cref{lem:actions-MAX-convex} yields that a subset of states is
a dominion of \MAX in the two-player game $\Gamma_\infty(T)$ if and only if it is a dominion in
the one-player game $\breve{\Gamma}_\infty(T)$.

We shall also need the following notion.
We say that a nonempty set $\Theta \subset [n]$ is {\em invariant} in $\breve{\Gamma}_\infty(T)$
if for every initial position in $\Theta$, the state remains almost surely in $\Theta$
at each stage, whatever action is chosen by player \MAX.
This is equivalent to the condition that, in every state in $\Theta$, the action space of \MAX
contains only sets $J \subset \Theta$.
By definition of $\breve{\Gamma}_\infty(T)$ and using \Cref{cor:action-MIN-convex},
we get the following equivalences.

\begin{lemma}
  \label{lem:invariant-dom}
  A set $\Theta \subset [n]$ is invariant in $\breve{\Gamma}_\infty(T)$ if and only if
  $\supp(T_i) \subset \Theta$ for all $i \in \Theta$, or if and only if $\Theta$ is
  a dominion of player \MIN in $\Gamma_\infty(T)$.
  \qed
\end{lemma}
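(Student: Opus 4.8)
The plan is to prove a chain of equivalences relating the three stated conditions: (a) $\Theta$ is invariant in $\breve{\Gamma}_\infty(T)$; (b) $\supp(T_i) \subset \Theta$ for all $i \in \Theta$; (c) $\Theta$ is a dominion of \MIN in $\Gamma_\infty(T)$. I would establish (a)$\Leftrightarrow$(b) first, then (b)$\Leftrightarrow$(c), both by unwinding the definitions and invoking \Cref{cor:action-MIN-convex}.

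For (a)$\Leftrightarrow$(b): by the description of $\breve{\Gamma}_\infty(T)$, in any state $i$ every action $J$ of \MAX satisfies $J \subset \supp(T_i)$, and moreover $J = \supp(T_i)$ is always an admissible action. Hence the action space of \MAX in state $i$ consists only of subsets of $\Theta$ precisely when $\supp(T_i) \subset \Theta$. By the definition of invariance recalled just before the lemma statement, $\Theta$ is invariant iff in every state $i \in \Theta$ the action space of \MAX contains only sets $J \subset \Theta$; combining these two observations over all $i \in \Theta$ yields (a)$\Leftrightarrow$(b).

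For (b)$\Leftrightarrow$(c): recall that $\Theta$ is a dominion of \MIN in $\Gamma_\infty(T)$ iff in every state $i \in \Theta$, the set $\Theta$ itself is an admissible action of \MIN (this reduction was noted when dominions were introduced, using that $J = [n]$ is always admissible for the opponent). By \Cref{cor:action-MIN-convex}, $\Theta$ is an admissible action of \MIN in state $i$ iff $\Theta \supset \supp(T_i)$. Quantifying over $i \in \Theta$ gives exactly condition (b), so (b)$\Leftrightarrow$(c).

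Since each equivalence is a short definition-chase given the earlier results, I do not anticipate a genuine obstacle; the only point requiring minor care is invoking the correct simplified characterization of dominions (that membership of $\Theta$ in the action space suffices), and correctly citing \Cref{cor:action-MIN-convex} which presupposes convexity of $T$ — an assumption present in the hypotheses of this lemma.
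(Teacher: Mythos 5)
Your proof is correct and follows essentially the same route as the paper, which dispatches this lemma as an immediate consequence of the definition of $\breve{\Gamma}_\infty(T)$ together with \Cref{cor:action-MIN-convex}. Your remark about convexity is apt: it is not restated in the lemma but is a standing hypothesis of the subsection, so the appeal to \Cref{cor:action-MIN-convex} is legitimate.
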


The above observations on dominions and invariant sets in $\breve{\Gamma}_\infty(T)$ lead to
the following adaptation of \Cref{thm:existence-additive} to
the case of convex maps.

\begin{corollary}
  \label{cor:bounded-slice-spaces-convex}
  A convex monotone additively homogeneous map $T: \R^n \to \R^n$ 
  has a slice space $\Acal_\alpha^\beta(T)$ that is unbounded in Hilbert's seminorm
  if and only if there exists in the one-player game $\breve{\Gamma}_\infty(T)$ an invariant set
  disjoint from a dominion of player \MAX.
\end{corollary}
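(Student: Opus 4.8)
The plan is to derive \Cref{cor:bounded-slice-spaces-convex} from \Cref{thm:existence-additive} by translating the abstract ``disjoint dominions'' dichotomy of the two-player game $\Gamma_\infty(T)$ into the language of the one-player game $\breve{\Gamma}_\infty(T)$, using the correspondences already assembled in \Cref{cor:action-MIN-convex}, \Cref{lem:actions-MAX-convex} and \Cref{lem:invariant-dom}. By \Cref{thm:existence-additive}, $T$ has an unbounded slice space if and only if the two players \emph{do} have disjoint dominions in $\Gamma_\infty(T)$: a dominion $\Delta_{\textsc{Min}}$ of \MIN and a dominion $\Delta_{\textsc{Max}}$ of \MAX with $\Delta_{\textsc{Min}} \cap \Delta_{\textsc{Max}} = \emptyset$. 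So it suffices to show that such a disjoint pair exists in $\Gamma_\infty(T)$ if and only if there is, in $\breve{\Gamma}_\infty(T)$, an invariant set disjoint from a dominion of \MAX.

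For the forward direction, suppose $\Delta_{\textsc{Min}}, \Delta_{\textsc{Max}}$ are disjoint dominions in $\Gamma_\infty(T)$. By \Cref{lem:invariant-dom}, $\Delta_{\textsc{Min}}$ is exactly an invariant set of $\breve{\Gamma}_\infty(T)$, and by \Cref{lem:actions-MAX-convex} (together with the observation preceding \Cref{lem:invariant-dom} that a dominion of \MAX in $\Gamma_\infty(T)$ is a dominion of \MAX in $\breve{\Gamma}_\infty(T)$) $\Delta_{\textsc{Max}}$ is a dominion of \MAX in the one-player game; these remain disjoint, giving the required configuration. For the converse, suppose $\Theta$ is invariant in $\breve{\Gamma}_\infty(T)$ and $\Delta$ is a dominion of \MAX in $\breve{\Gamma}_\infty(T)$ with $\Theta \cap \Delta = \emptyset$. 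Again $\Theta$ is a dominion of \MIN in $\Gamma_\infty(T)$ by \Cref{lem:invariant-dom}, and $\Delta$ is a dominion of \MAX in $\Gamma_\infty(T)$ by \Cref{lem:actions-MAX-convex}; so the two players have disjoint dominions in $\Gamma_\infty(T)$, and \Cref{thm:existence-additive} yields an unbounded slice space. The only subtlety is to keep straight that \emph{invariant} set (for \MAX) and \emph{dominion} (for \MIN) coincide in this setting — which is precisely the content of \Cref{lem:invariant-dom} — so that the natural asymmetry of the one-player game (only \MAX plays) matches the two-sided statement of \Cref{thm:existence-additive}.

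The main thing to be careful about — more a bookkeeping hazard than a genuine obstacle — is the direction of the equivalences in \Cref{lem:actions-MAX-convex}: passing from $\Gamma_\infty(T)$ to $\breve{\Gamma}_\infty(T)$ one must intersect a \MAX-action $J$ with $\supp(T_i)$, and one should check that intersecting a dominion $\Delta$ of \MAX in $\Gamma_\infty(T)$ with the supports still yields a subset of $\Delta$, so the resulting $\breve{\Gamma}_\infty(T)$-action stays inside $\Delta$; this is immediate since $J \cap \supp(T_i) \subset J \subset \Delta$. With that checked, the proof is just a chain of ``if and only if'' applications of the three lemmas followed by one invocation of \Cref{thm:existence-additive}, and no analysis beyond what is already in the excerpt is needed.
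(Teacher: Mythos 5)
Your proof is correct and follows essentially the same route as the paper: the corollary is stated there as a direct consequence of \Cref{thm:existence-additive} combined with \Cref{lem:actions-MAX-convex} (dominions of \MAX coincide in the two games) and \Cref{lem:invariant-dom} (invariant sets of $\breve{\Gamma}_\infty(T)$ are exactly the dominions of \MIN in $\Gamma_\infty(T)$). Your bookkeeping check that $J \cap \supp(T_i) \subset \Delta$ is the right detail to verify and matches the paper's implicit argument.
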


\section{Games, dominions and uniqueness of eigenvectors}
\label{sec:uniqueness}

In this section, we give, for any monotone additively homogeneous map, a game-theoretical
characterization of the uniqueness of the eigenvector, up to an additive constant
(i.e., up to the addition by a multiple of the unit vector).
Remarkably enough, the criterion turns out to be identical, up to the definition of the game,
to the dominion condition given in \Cref{sec:existence}.

Let $T$ be a monotone additively homogeneous self-map of $\R^n$,
and $u$ be a point in $\R^n$.
We introduce an abstract ``local'' two-player stochastic game $\Gamma_u(T)$ as follows.
The state space is $[n]$ and when in state $i$, player \MIN chooses a set $I \subset [n]$
such that
\begin{equation}
  \label{eq:uniqueness-action-min}
  \exists \varepsilon > 0 , \quad \forall \alpha \in [0,\varepsilon] , \quad
  T_i(u + \alpha \unit_\cpt{I}) = T_i(u) .
\end{equation}
Dually, player \MAX chooses in state $i$ a set $J \subset [n]$ such that
\begin{equation}
  \label{eq:uniqueness-action-max}
  \exists \varepsilon > 0 , \quad \forall \alpha \in [0,\varepsilon] , \quad
  T_i(u - \alpha \unit_\cpt{J}) = T_i(u) .
\end{equation}
Then the next state is drawn uniformly in $I \cap J$.
Similarly to the game $\Gamma_\infty(T)$, we do not need to define a payoff function since
we are only interested in the state dynamics.

Before stating the subsequent uniqueness result, let us show that the game
$\Gamma_u(T)$ is well defined.
First, both players can always choose the set $[n]$, so that their action spaces are not empty.
Moreover, for every admissible choice $(I,J)$ in any state $i$, we have
$I \cap J \neq \emptyset$ (in particular $I$ and $J$ are not empty).
Indeed, if $I$ and $J$ are two disjoint sets, then we have
$\unit_J = \unit - \unit_\cpt{J} \leq \unit_\cpt{I}$, which implies that
for every $\varepsilon \geq 0$ 
\[
  T_i(u) \leq \varepsilon + T_i(u - \varepsilon \unit_\cpt{J}) \leq
  T_i(u + \varepsilon \unit_\cpt{I}) \leq \varepsilon + T_i(u) .
\]
It follows that the conditions \labelcref{eq:uniqueness-action-min} and
\labelcref{eq:uniqueness-action-max} cannot be both satisfied.
Further note that, by monotonicity of $T$, if \MIN (resp., \MAX) can choose some set $I$,
then he can choose any other set $K \supset I$.

Dominions are defined as in \Cref{sec:existence} and since
the ``abstract'' games $\Gamma_\infty(T)$ and $\Gamma_u(T)$ are identical,
up to the definition of the action spaces, the following characterization holds:
a subset of states $\Delta$ is a dominion of one player in $\Gamma_u(T)$ if and only if,
for each state $i \in \Delta$, the set $\Delta$ is an admissible action for that player.
We can now state the game-theoretical criterion for uniqueness of eigenvectors.

\begin{theorem}
  \label{thm:uniqueness-additive}
  Let $T: \R^n \to \R^n$ be a monotone additively homogeneous map.
  An eigenvector $u$ of $T$ is unique, up to an additive constant,
  if and only if the players do not have disjoint dominions in the game $\Gamma_u(T)$.
\end{theorem}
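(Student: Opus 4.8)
The plan is to prove both directions by contraposition, mirroring the structure of the proof of \Cref{thm:existence-additive} but working with the \emph{difference} of two eigenvectors rather than with a sequence of slice-space elements. Suppose $u$ and $v$ are eigenvectors of $T$; since $T(u)=\lambda\unit+u$ and $T(v)=\lambda'\unit+v$ and the eigenvalue is unique, we have $\lambda=\lambda'$. Set $w=v-u$. The key elementary observation, exploiting that $T$ is nonexpansive in Hilbert's seminorm (equivalently, monotone and additively homogeneous), is that $\Hilbert{T(v)-T(u)}=\Hilbert{w}$, i.e.\ the oscillation of $w$ is preserved: $\max_i (v-u)_i - \min_i (v-u)_i$ does not shrink under $T$. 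After normalizing $w$ so that $\min_\ell w_\ell=0$ and writing $\tau=\max_\ell w_\ell=\Hilbert{w}$, non-uniqueness means we may take $\tau>0$. The plan is to show that the ``top set'' $I:=\{\ell : w_\ell=0\}$ (indices where $v\leq u$ is tight from below, after normalization — here, where $w$ attains its minimum) and the ``bottom set'' $J:=\{\ell : w_\ell=\tau\}$ (where $w$ attains its maximum) are disjoint dominions of \MIN and \MAX respectively in $\Gamma_u(T)$. Disjointness is immediate since $0<\tau$. For the dominion property: because $\Hilbert{w}$ is preserved, no coordinate of $w$ can leave the band $[0,\tau]$ after applying $T$; in particular, for $i\in I$ we have $T_i(v)=T_i(u)$ (coordinate $i$ of the difference stays at the minimum $0$), and since $v=u+w\geq u+0\cdot\unit_\cpt{I}$ while also $v\leq u+\tau\unit_\cpt{I}$ would be false in general — so instead one argues: $v_\ell\geq u_\ell$ exactly fails; we have $v = u+w$ with $w_\ell=0$ on $I$, so on a neighbourhood we can squeeze $u\leq u+\alpha\unit_\cpt{I}\leq$ (something comparable to $v$ for small $\alpha$), forcing $T_i(u+\alpha\unit_\cpt{I})=T_i(u)$ for small $\alpha\geq0$, i.e.\ $I$ is an admissible action of \MIN in each state $i\in I$; dually $J$ is an admissible action of \MAX in each state $j\in J$. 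This gives disjoint dominions.

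For the converse, suppose \MIN has a dominion $I$ and \MAX has a dominion $J$ with $I\cap J=\emptyset$ in $\Gamma_u(T)$. By definition of the action spaces and the reduction noted before the theorem (a set $\Delta$ is a dominion iff it is itself an admissible action in each of its states), there exists $\varepsilon>0$ such that for all $\alpha\in[0,\varepsilon]$: $T_i(u+\alpha\unit_\cpt{I})=T_i(u)$ for every $i\in I$, and $T_j(u-\alpha\unit_\cpt{J})=T_j(u)$ for every $j\in J$. The plan is to build a second eigenvector. Consider the perturbed point $u^{(\alpha)}:=u+\alpha\unit_\cpt{I}$ for small $\alpha>0$ (or, symmetrically, $u-\alpha\unit_\cpt{J}$). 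On coordinates $i\in I$, $T_i$ is unchanged; on coordinates outside $I$, monotonicity gives $T_\ell(u)\leq T_\ell(u^{(\alpha)})\leq T_\ell(u+\alpha\unit)=T_\ell(u)+\alpha$. So $T(u^{(\alpha)})-(\lambda\unit+u^{(\alpha)})$ is $0$ on $I$ and lies in $[-\alpha,0]$ off $I$ — the point $u^{(\alpha)}$ is ``almost'' a fixed point of $T-\lambda\unit$, off by a vector supported outside $I$ and bounded by $\alpha$. One then runs an iteration: starting from $u^{(\alpha)}$, the orbit of $T-\lambda\unit$ stays within Hilbert-seminorm distance $O(\alpha)$ of $u$ (using nonexpansiveness and that $\Hilbert{u^{(\alpha)}-u}=\alpha$ if $\cpt{I}\neq\emptyset$ and $I\neq[n]$, which holds because $J\subset\cpt{I}$ is nonempty) but cannot return to the ray through $u$: the coordinates in $I$ are frozen at their $u$-values along the iteration while, using the \MAX-dominion $J\subset\cpt{I}$, the coordinates in $J$ are pushed in the opposite direction, so no accumulation point of the orbit can be a translate of $u$. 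Since all additive slice spaces relevant here are bounded (or directly by a compactness/fixed-point argument on the nonexpansive map restricted to a suitable bounded invariant region), the orbit of $T-\lambda\unit$ from $u^{(\alpha)}$ has an accumulation point $v$ with $T(v)=\lambda\unit+v$ and $\Hilbert{v-u}>0$, contradicting uniqueness of $u$ up to an additive constant.

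The main obstacle I expect is the converse direction: turning the two one-sided dominions $I$ (for \MIN, via the $+\alpha\unit_\cpt{I}$ freezing) and $J$ (for \MAX, via $-\alpha\unit_\cpt{J}$) into a single genuine eigenvector distinct from $u$. A naive perturbation $u+\alpha\unit_\cpt{I}$ is not itself an eigenvector, only an approximate one, and iterating $T-\lambda\unit$ could in principle drift the coordinates outside $I\cup J$ in an uncontrolled way. The fix will be to combine the two dominions symmetrically — work with $u+\alpha\unit_\cpt{I}$ \emph{and} the fact that $J$ stays rigid from the \MAX side — and to invoke the boundedness of additive slice spaces (which holds automatically when $T$ has an eigenvector, or can be arranged on the sub-orthant spanned by the relevant coordinates) to extract a convergent subsequence, as in Nussbaum's theorem. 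The subtlety is ensuring the accumulation point genuinely has nonzero Hilbert seminorm relative to $u$; this is where one must carefully track that the $I$-coordinates are pinned while a $J$-coordinate is strictly displaced, for which the disjointness $I\cap J=\emptyset$ is essential. I would also need to handle the edge case $I\cup J\neq[n]$ versus $I\cup J=[n]$ separately, the latter being the cleaner situation in which $u+\alpha\unit_\cpt{I}=u+\alpha\unit_J$ is exactly an eigenvector for all small $\alpha$.
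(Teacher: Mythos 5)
Your forward direction is essentially the paper's argument: after normalizing so that $\lambda=0$ and $u=0$, the sets $I=\argmin v$ and $J=\argmax v$ of a nonconstant eigenvector $v$ are shown to be dominions of \MIN and \MAX by the squeeze $0=T(0)\leq T(\varepsilon \unit_\cpt{I})\leq T(v)=v$ (and its dual), and they are disjoint because $v$ is nonconstant. That part is sound, even if your write-up of the squeeze is muddled.

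The converse, as you have structured it, has a genuine gap. Your main line is: perturb to $u^{(\alpha)}=u+\alpha\unit_\cpt{I}$, iterate $T-\lambda\unit$, and extract an accumulation point of the bounded orbit, which you assert is a fixed point. Accumulation points of bounded orbits of nonexpansive maps are in general only \emph{periodic} points: for $T(x_1,x_2)=(x_2,x_1)$, which is monotone and additively homogeneous with fixed point $0$, the orbit of $(1,0)$ oscillates with period two and neither accumulation point is fixed. (Your parenthetical claim that all additive slice spaces are bounded whenever $T$ has an eigenvector is also false --- take $T=\mathrm{id}$.) What rescues the argument, and what the paper actually does, is precisely the alternative you mention only in passing: a fixed-point theorem on a compact convex invariant region, with no iteration at all. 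Concretely (still with $u=0$, $\lambda=0$), the dominion hypotheses give $\varepsilon>0$ with $T_i(\varepsilon\unit_\cpt{I})=0$ for $i\in I$ and $T_j(-\varepsilon\unit_\cpt{J})=0$ for $j\in J$. Any $v$ with $v=0$ on $I$, $v=\varepsilon$ on $J$ and $v_\ell\in[0,\varepsilon]$ elsewhere satisfies $\varepsilon\unit_J\leq v\leq\varepsilon\unit_\cpt{I}$, whence by monotonicity $T_i(v)=0=v_i$ on $I$ and $T_j(v)=\varepsilon=v_j$ on $J$. Freezing the $I$- and $J$-coordinates at $0$ and $\varepsilon$ and letting $T$ act on the remaining block $[0,\varepsilon]^L$, $L=[n]\setminus(I\cup J)$, yields a continuous self-map of a compact convex set; Brouwer's theorem gives a fixed point, hence a nonconstant eigenvector (nonconstant because it takes the value $0$ on $I$ and $\varepsilon$ on $J$, with $I,J\neq\emptyset$ disjoint). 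This makes the ``pinning'' of $I$ against $J$ automatic from the box constraints and removes any need to control drift on $L$; your edge case $I\cup J=[n]$ is then just the case where the box is a single point. I would rebuild your converse around this Brouwer step rather than the orbit argument.
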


Before proving this theorem, we provide an illustration.
\begin{example}
  Consider the map $T: \R^3 \to \R^3$ introduced in \Cref{ex:running-example}:
  \begin{equation*}
    T(x) =
    \begin{pmatrix}
      \frac{1}{2} (x_1 + x_2) \, \wedge \, \left( -1 + \frac{1}{2} (x_1 + x_3) \right) \\
      \left( 1 + \frac{1}{2} (x_1 + x_3) \right) \, \wedge \, \left( \frac{1}{2} (x_1 + x_2) \,
      \vee \, (-3 + x_3 ) \right) \\
      ( 1 + \frac{1}{2} (x_1 + x_3) ) \, \vee \, ( 1 + x_3 )
    \end{pmatrix} .
  \end{equation*}
  We know that the eigenvalue of $T$ is $0$ and that $u = \transpose{(0,0,2)}$ is
  an eigenvector.
  Let us check whether this is the unique eigenvector, up to an additive constant.

  The game $\Gamma_u(T)$ has $3$ states.
  In state 1 we have, for instance, $T_1 (u + \alpha \unit_{\{3\}}) = 0 = T_1(u)$
  for all $\alpha \geq 0$.
  Hence the set $\{1,2\}$ is an admissible action for player \MIN.
  On the other hand, $T_1 (u - \alpha \unit_{\{3\}}) = - \alpha / 2 < T_1(u)$
  for all $\alpha > 0$.
  So $\{1,2\}$ is not an admissible action for player \MAX.
  More generally, the actions of player \MIN are
  \begin{itemize}
    \item in state 1: $\{1,2\}$, $\{1,3\}$, $\{1,2,3\}$;
    \item in state 2: $\{1,2\}$, $\{1,2,3\}$;
    \item in state 3: $\{1,3\}$, $\{1,2,3\}$.
  \end{itemize}
  Hence the dominions of \MIN are $\{1,2\}$, $\{1,3\}$ and $\{1,2,3\}$.
  As for player \MAX, his actions are
  \begin{itemize}
    \item in state 1: $\{1,2,3\}$;
    \item in state 2: $\{1,2\}$, $\{1,2,3\}$;
    \item in state 3: $\{3\}$, $\{1,3\}$, $\{2,3\}$, $\{1,2,3\}$.
  \end{itemize}
  Hence the dominions of \MAX are $\{3\}$ and $\{1,2,3\}$.

  Consequently, $(\{1,2\},\{3\})$ is a pair of disjoint dominions in $\Gamma_u(T)$,
  and we deduce from \Cref{thm:uniqueness-additive} that $u$ is not
  the unique eigenvector of $T$, up to an additive constant.
  Indeed, it can be checked that $v = \transpose{(0,0,3)}$ is another eigenvector.
\end{example}


\begin{proof}[Proof of \Cref{thm:uniqueness-additive}]
  Beforehand, let us note that if we replace $T$ by the map
  $x \mapsto T(u+x) - \lambda \unit - u$ (where $\lambda \in \R$ is the eigenvalue of $T$),
  we might as well assume that $\lambda = 0$ and $u = 0$.

  Suppose that $v$ is a nonconstant eigenvector of $T$ (i.e., such that $v = v - u$ is not
  proportional to $\unit$).
  Up to the addition of a constant vector, we can assume that $\min_{\ell \in [n]} v_\ell = 0$.
  Let $I = \argmin v := \{ i \in [n] \mid v_i = 0 \}$.
  Then there exists a scalar $\varepsilon > 0$ such that $\varepsilon \unit_\cpt{I} \leq v$
  and we have $0 = T(0) \leq T(\varepsilon \unit_\cpt{I}) \leq T(v) = v$.
  This implies that $T_i(\varepsilon \unit_\cpt{I}) = 0$ for all $i \in I$.
  Thus, $I$ is a dominion of player \MIN in $\Gamma_u(T)$.

  Assume now (up to the addition of a constant) that $\max_{\ell \in [n]} v_\ell = 0$ and let
  $J = \argmax v := \{ i \in [n] \mid v_i = 0 \}$.
  Then there exists a scalar $\varepsilon' > 0$ such that $-\varepsilon' \unit_\cpt{J} \geq v$.
  This entails $0 = T(0) \geq T(-\varepsilon' \unit_\cpt{J}) \geq T(v) = v$, which implies
  that $T_j(-\varepsilon' \unit_\cpt{J}) = 0$ for all $j \in J$.
  Hence, $J$ is a dominion of player \MAX in $\Gamma_u(T)$, and since
  $I \cap J \neq \emptyset$, this proves that the dominion condition holds in $\Gamma_u(T)$.

  \medskip
  Conversely, assume that $(I,J)$ is a pair of disjoint dominions of \MIN and \MAX,
  respectively, in $\Gamma_u(T)$.
  Then, recalling that $T(u) = u = 0$, there exists $\varepsilon > 0$ such that
  \begin{alignat}{2}
    \label{eq:FP-argmin}
    & T_i(\varepsilon \unit_\cpt{I}) = 0 , && \quad \forall i \in I , \\
    \label{eq:FP-argmax}
    & T_j(-\varepsilon \unit_\cpt{J}) = 0 , && \quad \forall j \in J .
  \end{alignat}
  If $v$ is a vector in $\R^n$ which satisfies
  \begin{equation}
    \begin{aligned}
      & v_i = 0 , && \forall i \in I , \\
      & 0 \leq v_\ell \leq \varepsilon , && \forall \ell \notin I \cup J , \\
      & v_j = \varepsilon , && \forall j \in J ,
    \end{aligned}
    \label{eq:nonconstant-vector}
  \end{equation}
  then we have
  $0 \leq \varepsilon \unit_J = \varepsilon \unit - \varepsilon \unit_\cpt{J} \leq v \leq
  \varepsilon \unit_\cpt{I} \leq \varepsilon \unit$.
  Using~\labelcref{eq:FP-argmin} and~\labelcref{eq:FP-argmax} this yields
  \begin{equation}
    \label{eq:FP-argmin-argmax}
    \begin{aligned}
      & T_i(v) = 0 = v_i , \quad && \forall i \in I , \\
      & T_j(v) = \varepsilon = v_j , \quad && \forall j \in J .
    \end{aligned}
  \end{equation}

  If $I \cup J = [n]$, then we readily obtain a nonconstant eigenvector of $T$.
  Otherwise, let $L = \Cpt{I \cup J}$.
  We introduce the map $G: \R^L \to \R^n$ defined by
  \[
    G_\ell(x) =
    \begin{cases}
      0 & \quad \text{if} \quad \ell \in I , \\
      x_\ell & \quad \text{if} \quad \ell \in L , \\
      \varepsilon & \quad \text{if} \quad \ell \in J ,
    \end{cases}
  \]
  and the map $F: \R^L \to \R^L$ defined by $F_\ell(x) = T_\ell \left( G(x) \right)$
  for all $\ell \in L$.
  If $x \in [0,\varepsilon]^L$, then we have $G(x) \in [0,\varepsilon]^n$, which implies
  \[
    0 = T_\ell(0) \leq T_\ell \left( G(x) \right) \leq \varepsilon + T_\ell(0) = \varepsilon ,
    \quad \forall \ell \in L .
  \]
  This shows that $F$ maps the compact convex set $[0,\varepsilon]^L$ to itself.
  Hence, according to Brouwer's fixed point theorem, there exists $x^* \in [0,\varepsilon]^L$
  such that $F(x^*) = x^*$.

  Let $v = G(x^*)$ so that $T_\ell(v) = v_\ell$ for all $\ell \in L$.
  Since $v$ satisfies~\labelcref{eq:nonconstant-vector}, we also have~\labelcref{eq:FP-argmin-argmax}.
  Therefore, $v$ is a nonconstant eigenvector of $T$.
\end{proof}

\section{Algorithmic aspects}
\label{sec:algorithmic-aspects}

In this section, we give a graph-theoretical construction that allows us to check
the dominion condition in $\Gamma_\infty(T)$ and $\Gamma_u(T)$.
In the same way the latter games have a common structure, their combinatorial counterparts
that we hereafter introduce share identical properties.
To avoid the repetition of similar arguments, we then carry out a detailed analysis of
combinatorial and complexity aspects related to the first problem (existence of an eigenvector),
and present more briefly the results related to the second (uniqueness of the eigenvector). 

\subsection{Preliminaries on hypergraphs}

A {\em directed hypergraph} $\Hcal$ is a pair $(N,A)$ where $N$ is a set of {\em nodes} and
$A$ is a set of (directed) {\em hyperarcs}.
A hyperarc $a$ is an ordered pair $(\tail(a),\head(a))$ of disjoint nonempty subsets of nodes;
$\tail(a)$ is the {\em tail} of $a$ and $\head(a)$ is its {\em head}.
For brevity, we shall write $\tail$ and $\head$ instead of $\tail(a)$ and $\head(a)$,
respectively.
When $\tail$ and $\head$ are both of cardinality one, the hyperarc is said to be an arc,
and when every hyperarc is an arc, the directed hypergraph becomes a directed graph.
In the following, the term hypergraph will always refer to a directed hypergraph.

We will also need the notion of reachability in $\Hcal=(N,A)$.
A {\em hyperpath} of length $p$ from a set of nodes $I \subset N$ to a node $j \in N$
is a sequence of $p$ hyperarcs $(\tail_1,\head_1),\dots,(\tail_p,\head_p)$, such that
$\tail_i \subset \bigcup_{k=0}^{i-1} \head_k$ for all $i=1,\dots,p+1$
with the convention $\head_0 = I$ and $\tail_{p+1} = \{j\}$.
We say that a node $j \in N$ is {\em reachable} from a subset $I$ of $N$ if there exists
a hyperpath from $I$ to $j$.
Alternatively, the relation of reachability can be defined in a recursive way:
$j$ is reachable from $I$ if either $j \in I$ or there exists a hyperarc $(\tail,\head)$
such that $j \in \head$ and every node of $\tail$ is reachable from the set $I$.
A subset $J$ of $N$ is said to be {\em reachable} from a subset $I$ of $N$ if
every node of $J$ is reachable from $I$.
We denote by $\reach(I,\Hcal)$ the set of reachable nodes from $I$ in $\Hcal$.
A subset $I$ of $N$ is {\em invariant} in the hypergraph $\Hcal$ if it contains all the nodes
that are reachable from itself, i.e., $\reach(I,\Hcal) \subset I$, hence $\reach(I,\Hcal) = I$
since the other inclusion always holds.
One readily checks that, for $J \subset N$, $\reach(J,\Hcal)$ is the smallest invariant set
in $\Hcal$ containing $J$.

For background on hypergraphs, we refer the reader to \cite{All14} and the references
therein, and in particular to \cite{GLNP93} for reachability problems.

\clearpage

\subsection{Existence of eigenvectors}
\label{sec:existence-hypergraphs}

\subsubsection{Hypergraphs and dominions}
\label{sec:hypergraphs-dominions-slice-spaces}

Given a monotone additively homogeneous map $T: \R^n \to \R^n$, we introduce a pair of
hypergraphs, denoted by $(\Hcal_\infty^+(T),\Hcal_\infty^-(T))$, and defined as follows:
\begin{itemize}
  \item the set of nodes of $\Hcal_\infty^+(T)$ and $\Hcal_\infty^-(T)$ is $[n]$;
  \item the hyperarcs of $\Hcal_\infty^+(T)$ are the pairs $(J,\{i\})$ such that $i \notin J$ and
    \[
      \lim_{\alpha \to +\infty} T_i(\alpha \unit_J) = +\infty ;
    \]
  \item the hyperarcs of $\Hcal_\infty^-(T)$ are the pairs $(J,\{i\})$ such that $i \notin J$ and
    \[
      \lim_{\alpha \to -\infty} T_i(\alpha \unit_J) = -\infty .
    \]
\end{itemize}

Equivalently, we can reformulate the definition of a hyperarc $(J,\{i\})$ in $\Hcal^+_\infty(T)$
(resp., $\Hcal^-_\infty(T)$) by asking that $i \notin J$ and $\cpt{J}$ is not an action of
player \MIN (resp., \MAX) in $\Gamma_\infty(T)$ when in state $i$.
A straightforward consequence of this definition is that a subset of nodes $J \subset [n]$
is invariant in $\Hcal_\infty^+(T)$ (resp., $\Hcal_\infty^-(T)$) if and only if,
for every $i \in \cpt{J}$, $\cpt{J}$ is an action of player \MIN (resp., player \MAX)
in $\Gamma_\infty(T)$.
This leads to the following characterization.

\begin{lemma}
  \label{lem:invariant-subsets}
  A set of nodes $J \subsetneq [n]$ is invariant in the hypergraph $\Hcal_\infty^+(T)$
  (resp., $\Hcal_\infty^-$(T)) if and only if its complement $\cpt{J}$ is a dominion of
  player \MIN (resp., \MAX) in the game $\Gamma_\infty(T)$.
  \qed
\end{lemma}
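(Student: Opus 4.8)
The plan is to prove \Cref{lem:invariant-subsets} by directly unwinding the definition of invariance in the hypergraph and matching it against the definition of a dominion. Since the two cases ($\Hcal_\infty^+$ versus $\Hcal_\infty^-$) are formally identical up to swapping \MIN for \MAX and reversing the sign of $\alpha$, I would carry out the argument only for $\Hcal_\infty^+(T)$ and player \MIN, remarking that the other case follows by the same reasoning.

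First I would recall the recursive characterization of reachability: a set $J \subsetneq [n]$ is invariant in $\Hcal_\infty^+(T)$ if and only if there is no hyperarc $(K,\{i\})$ with $K \subset J$ and $i \in \cpt{J}$. Using the definition of the hyperarcs of $\Hcal_\infty^+(T)$, the existence of such a hyperarc means: there exists $i \in \cpt{J}$ and $K \subset J$ (hence $i \notin K$ automatically) with $\lim_{\alpha \to +\infty} T_i(\alpha \unit_K) = +\infty$. By monotonicity of $T$, if this limit is $+\infty$ for some $K \subset J$, it is also $+\infty$ for $K = J$ itself (since $\unit_K \leq \unit_J$). Therefore $J$ is invariant if and only if for every $i \in \cpt{J}$ we have $\lim_{\alpha \to +\infty} T_i(\alpha \unit_J) < +\infty$, which is exactly the statement already recorded in the excerpt just before the lemma.

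Next I would translate this into the game. Writing $\Delta = \cpt{J}$, the condition ``$\lim_{\alpha \to +\infty} T_i(\alpha \unit_J) < +\infty$ for every $i \in \Delta$'' says precisely that $\cpt{\Delta} = J$ satisfies the defining inequality for the action space of \MIN in state $i$, i.e.\ that $\Delta$ is an admissible action of \MIN in each state $i \in \Delta$ (recall $\unit_J = \unit_{\cpt{\Delta}} = \unit_{\Cpt{\Delta}}$). By the characterization of dominions recalled earlier — a set $\Delta$ is a dominion of a player iff in each state $i \in \Delta$ that player may choose the set $\Delta$ — this is exactly the statement that $\Delta$ is a dominion of \MIN. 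Chaining these equivalences gives: $J$ invariant in $\Hcal_\infty^+(T)$ $\iff$ $\cpt{J}$ is a dominion of \MIN in $\Gamma_\infty(T)$, as claimed.

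There is no real obstacle here; the content is purely definitional bookkeeping. The only point requiring a moment's care is the monotonicity reduction from an arbitrary tail $K \subset J$ to $K = J$, which ensures that ``no hyperarc into $\cpt{J}$ with tail inside $J$'' collapses to the single condition with tail $J$; everything else is substitution of definitions. I would also note in passing that the hypothesis $J \subsetneq [n]$ is needed only so that $\cpt{J}$ is nonempty and thus eligible to be a dominion.
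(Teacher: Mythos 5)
Your argument is correct and matches the paper's (the paper states the lemma with no proof, recording only the key observation that $J$ is invariant iff $\cpt{J}$ is an admissible action of the relevant player in every state of $\cpt{J}$, which is exactly the equivalence you establish). Your explicit monotonicity step reducing an arbitrary tail $K \subset J$ to $K = J$ is the one detail the paper leaves implicit, and you handle it correctly.
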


Therefore, \Cref{thm:existence-additive} can be reformulated in terms of hypergraph reachability.

\begin{theorem}
  \label{thm:hypergraphs-slice-spaces}
  A monotone additively homogeneous map $T: \R^n \to \R^n$ has a slice space
  $\Acal_\alpha^\beta(T)$ that is unbounded in Hilbert's seminorm if and only if
  there exist a pair $(I,J)$ of nonempty disjoint subsets of $[n]$ such that
  $\reach(\cpt{I},\Hcal_\infty^+(T)) = \cpt{I}$ and $\reach(\cpt{J},\Hcal_\infty^-(T)) = \cpt{J}$.
  \qed
\end{theorem}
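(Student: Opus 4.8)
The plan is to obtain this statement as an immediate corollary of \Cref{thm:existence-additive}, using the dictionary between dominions and invariant sets of the hypergraphs recorded in \Cref{lem:invariant-subsets}.

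First I would invoke the equivalence \ref{it:existence-i}~$\Leftrightarrow$~\ref{it:existence-ii} of \Cref{thm:existence-additive}: passing to contrapositives, $T$ has an additive slice space $\Acal_\alpha^\beta(T)$ that is unbounded in Hilbert's seminorm if and only if the two players \emph{do have} disjoint dominions in $\Gamma_\infty(T)$, that is, if and only if there is a pair $(I,J)$ of disjoint subsets of $[n]$ with $I$ a dominion of \MIN and $J$ a dominion of \MAX. Recall that dominions are by definition nonempty, so both $I$ and $J$ are nonempty; in particular $\cpt{I}$ and $\cpt{J}$ are proper subsets of $[n]$, which is exactly the hypothesis under which \Cref{lem:invariant-subsets} applies.

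Next I would translate the dominion conditions into invariance conditions for the hypergraphs. By \Cref{lem:invariant-subsets}, $I$ is a dominion of \MIN in $\Gamma_\infty(T)$ if and only if $\cpt{I}$ is invariant in $\Hcal_\infty^+(T)$, and $J$ is a dominion of \MAX if and only if $\cpt{J}$ is invariant in $\Hcal_\infty^-(T)$. Finally, by the definition of invariance in a hypergraph recalled in the preliminaries (a subset $S$ of the node set is invariant in $\Hcal$ precisely when $\reach(S,\Hcal)=S$, the inclusion $S\subset\reach(S,\Hcal)$ being automatic), these two statements are equivalent to $\reach(\cpt{I},\Hcal_\infty^+(T))=\cpt{I}$ and $\reach(\cpt{J},\Hcal_\infty^-(T))=\cpt{J}$, respectively. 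Chaining these equivalences yields the theorem.

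Since every step is a direct application of an already-established result, there is no genuine obstacle here; the only points that deserve a moment's care are the bookkeeping of nonemptiness (needed to legitimately pass to complements in \Cref{lem:invariant-subsets}) and the observation that the disjointness of $I$ and $J$ transfers verbatim, because the hypergraph reformulation is phrased in terms of $I$ and $J$ themselves and only their complements are required to be invariant.
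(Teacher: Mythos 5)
Your proposal is correct and follows exactly the route the paper intends (the theorem is stated with a \qed because it is immediate from \Cref{thm:existence-additive}\ref{it:existence-i}$\Leftrightarrow$\ref{it:existence-ii} combined with \Cref{lem:invariant-subsets} and the definition of invariance via $\reach$). The bookkeeping you flag — nonemptiness of the dominions guaranteeing that the complements are proper subsets, as required by \Cref{lem:invariant-subsets} — is precisely the only point needing care, and you handle it correctly.
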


\begin{example}
  Consider the map $T: \R^3 \to \R^3$ introduced by \labelcref{eq:running-ex}
  in \Cref{ex:running-example} and let us construct the hypergraphs $\Hcal_\infty^\pm(T)$.
  Their set of nodes is $\{1,2,3\}$.
  In $\Hcal_\infty^+(T)$, for instance, there is no arc from $\{2\}$ to $\{1\}$ since
  $T_1(\alpha \unit_{\{2\}}) = -1$ for all $\alpha \geq 0$, hence
  $\lim_{\alpha \to +\infty} T_1(\alpha \unit_{\{2\}}) < +\infty$.
  Likewise, since $T_1(\alpha \unit_{\{3\}}) = 0$ for all $\alpha \geq 2$,
  there is no arc from $\{3\}$ to $\{1\}$.
  However, $\lim_{\alpha \to +\infty} T_1(\alpha \unit_{\{2,3\}}) =
  \lim_{\alpha \to +\infty} (-1 + \alpha / 2) = +\infty$,
  so there is a hyperarc from $\{2,3\}$ to $\{1\}$.

  Dually, $T_1(\alpha \unit_{\{2\}}) = \alpha / 2$ for all $\alpha \leq -2$ and
  $T_1(\alpha \unit_{\{3\}}) = -1 + \alpha / 2$ for all $\alpha \leq 2$.
  So $\lim_{\alpha \to -\infty} T_1(\alpha \unit_{\{2\}}) =
  \lim_{\alpha \to -\infty} T_1(\alpha \unit_{\{3\}}) = -\infty$, which implies that
  there is an arc from $\{2\}$ to $\{1\}$ and one from $\{3\}$ to $\{1\}$ in $\Hcal_\infty^-(T)$.

  \Cref{fig:hypergraph-running-ex} shows a concise representation of these hypergraphs,
  where only the (hyper)arcs with minimal tail (with respect to the inclusion partial order)
  are represented.

  \begin{figure}[htbp]
    \begin{center}
      \begin{minipage}[b]{0.33\linewidth}
        \centering
        \begin{tikzpicture}
          [scale=1,on grid,auto,bend angle=40,
            state/.style={circle,draw,inner sep=0pt,minimum size=0.4cm},
            head/.style={<-,>=angle 60},
            tail/.style={->,>=angle 60},
          hyperedge/.style={>=angle 60}]
          \node[state] (s1) at (0,0) {$1$};
          \node[state] (s2) at (-0.75,1.3) {$2$}
          edge [head,bend left] (s1);
          \node[state] (s3) at (-1.5,0) {$3$}
          edge [head,bend right] (s1)
          edge [tail,bend left] (s2);
          \hyperedgewithangles[0.35][$(hyper@tail)!0.65!(hyper@head)$]{s2/-90,s3/30}{-30}{s1/150};
        \node[state,color=black!0] (s5) at (-2.25,+1.5) [label=below:{$\Hcal_\infty^+(T)$}] {};
        \end{tikzpicture}
      \end{minipage}
      \hspace{3em}
      \begin{minipage}[b]{0.33\linewidth}
        \centering
        \begin{tikzpicture}
          [scale=1,on grid,bend angle=40,auto,
            state/.style={circle,draw,inner sep=0pt,minimum size=0.4cm},
            head/.style={<-,>=angle 60},
            tail/.style={->,>=angle 60}]
          \node[state] (s1) at (0,0) {$1$};
          \node[state] (s2) at (-0.75,1.3) {$2$}
          edge [tail,bend left] (s1)
          edge [head, bend right] (s1);
          \node[state] (s3) at (-1.5,0) {$3$}
          edge [tail,bend right] (s1)
          edge [tail,bend left] (s2);
        \node[state,color=black!0] (s5) at (-2.25,+1.5) [label=below:{$\Hcal_\infty^-(T)$}] {};
        \end{tikzpicture}
      \end{minipage}
    \end{center}
    \caption{The hypergraphs $\Hcal_\infty^\pm(T)$ associated with $T$ defined by \labelcref{eq:running-ex}.}
    \label{fig:hypergraph-running-ex}
  \end{figure}

  The only nontrivial invariant set of nodes of $\Hcal_\infty^+(T)$ (resp., $\Hcal_\infty^-(T)$)
  is $\{2\}$ (resp., $\{1,2\}$).   
  Since their complements are not disjoint, we deduce from \Cref{thm:hypergraphs-slice-spaces}
  that all the slice spaces of $T$ are bounded in Hilbert's seminorm.
\end{example}

\subsubsection{The special case of convex maps}
\label{sec:convexity-graph-slice-spaces}

We now show that for a convex monotone additively homogeneous map $T: \R^n \to \R^n$,
the reachability condition of \Cref{thm:hypergraphs-slice-spaces} simplifies.
To that purpose, we associate to $T$ the directed graph $\Gcal_\infty(T)$ with set of vertices
$[n]$ and an edge from $i$ to $j$ if 
\[
  \lim_{\alpha \to +\infty} T_i(\alpha \unit_{\{j\}}) = +\infty .
\]
Since $T$ is convex, this is equivalent to $j \in \supp(T_i)$ (see \Cref{lem:support}).

A {\em final class} of $\Gcal_\infty(T)$ is a nonempty set of nodes $C$ such that every
two nodes of $C$ are connected by a directed path, and every path starting from a node
in $C$ remains in it.
Recalling that a set $\Theta \subset [n]$ is invariant in the one-player game
$\breve{\Gamma}_\infty(T)$ if and only if $\supp(T_i) \subset \Theta$ for all $i \in \Theta$
(see \Cref{lem:invariant-dom}), a direct consequence of the definitions is that a final class
of $\Gcal_\infty(T)$ is an invariant set in $\breve{\Gamma}_\infty(T)$, and conversely that
any invariant set in $\breve{\Gamma}_\infty(T)$ contains a final class of $\Gcal_\infty(T)$.
Then the translation of \Cref{cor:bounded-slice-spaces-convex} in terms of graph
leads to the following.

\begin{corollary}
  A convex monotone additively homogeneous map $T:\R^n\to\R^n$ has all its slice spaces
  $\Acal_\alpha^\beta(T)$ bounded in Hilbert's seminorn if and only if the digraph
  $\Gcal_\infty(T)$ has a unique final class $C$ and $\reach(C,\Hcal_\infty^-(T)) = [n]$.
  \label{cor:hypergraphs-convex-slice-spaces}
\end{corollary}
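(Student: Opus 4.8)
The plan is to derive the statement from \Cref{cor:bounded-slice-spaces-convex} together with the dictionary, already set up in \Cref{sec:convexity-slice-spaces,sec:convexity-graph-slice-spaces}, relating final classes of $\Gcal_\infty(T)$, invariant sets and \MAX-dominions of $\breve{\Gamma}_\infty(T)$, and invariant sets of the hypergraph $\Hcal_\infty^-(T)$. Three ingredients will be used repeatedly. (1) By the discussion preceding \Cref{cor:hypergraphs-convex-slice-spaces} and by \Cref{lem:invariant-dom}, every final class of $\Gcal_\infty(T)$ is an invariant set of $\breve{\Gamma}_\infty(T)$, every invariant set of $\breve{\Gamma}_\infty(T)$ contains a final class, and a finite digraph always has at least one final class (so the negation of ``unique final class'' is ``at least two final classes'', and two distinct final classes are disjoint). (2) Every final class $C$ is itself a dominion of \MAX in $\breve{\Gamma}_\infty(T)$: for $i\in C$ we have $\supp(T_i)\subset C$ by the closure property of a final class, and $J=\supp(T_i)$ is always an admissible \MAX action in state $i$ (this is exactly the action used to check that $\breve{\Gamma}_\infty(T)$ is well defined), so \MAX can keep the state inside $C$. (3) By \Cref{lem:invariant-subsets} applied to $\Hcal_\infty^-(T)$, together with the identification of \MAX-dominions in $\Gamma_\infty(T)$ and in $\breve{\Gamma}_\infty(T)$ recorded after \Cref{lem:actions-MAX-convex}: for $J\subsetneq[n]$, the set $\cpt{J}$ is a dominion of \MAX iff $\reach(J,\Hcal_\infty^-(T))=J$; recall also that $\reach(\cdot,\Hcal_\infty^-(T))$ is monotone and that $\reach(J,\Hcal_\infty^-(T))$ is the smallest invariant set of $\Hcal_\infty^-(T)$ containing $J$.

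I will prove both implications by contraposition. For the ``if'' direction, assume that $\Gcal_\infty(T)$ fails to have a unique final class $C$ with $\reach(C,\Hcal_\infty^-(T))=[n]$. If there are two distinct final classes $C_1\neq C_2$, they are disjoint, $C_1$ is an invariant set of $\breve{\Gamma}_\infty(T)$ by (1), and $C_2$ is a dominion of \MAX by (2); hence \Cref{cor:bounded-slice-spaces-convex} yields an unbounded slice space. Otherwise there is a unique final class $C$ with $R:=\reach(C,\Hcal_\infty^-(T))\subsetneq[n]$; then $R$ is invariant in $\Hcal_\infty^-(T)$, so by (3) its complement $\cpt{R}$ is a nonempty dominion of \MAX, and it is disjoint from the invariant set $C\subset R$, so \Cref{cor:bounded-slice-spaces-convex} again gives an unbounded slice space.

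For the ``only if'' direction, assume some slice space $\Acal_\alpha^\beta(T)$ is unbounded in Hilbert's seminorm. By \Cref{cor:bounded-slice-spaces-convex} there exist a nonempty invariant set $\Theta$ of $\breve{\Gamma}_\infty(T)$ and a nonempty dominion $\Delta$ of \MAX with $\Theta\cap\Delta=\emptyset$. By (1), $\Theta$ contains some final class $C_0$. If $\Gcal_\infty(T)$ has at least two final classes, the asserted condition already fails. If it has a unique final class $C$, then $C=C_0\subset\Theta\subset\cpt{\Delta}$; since $\Delta\neq\emptyset$ we have $\cpt{\Delta}\subsetneq[n]$, and by (3) the set $\cpt{\Delta}$ is invariant in $\Hcal_\infty^-(T)$, whence $\reach(C,\Hcal_\infty^-(T))\subset\reach(\cpt{\Delta},\Hcal_\infty^-(T))=\cpt{\Delta}\subsetneq[n]$, so $\reach(C,\Hcal_\infty^-(T))\neq[n]$. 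In either case the condition of the corollary fails, which completes the proof.

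I do not expect a genuinely hard step here: the analytic content was already absorbed into \Cref{cor:bounded-slice-spaces-convex} and \Cref{lem:invariant-subsets}. The points that require care are observation (2) --- that a final class simultaneously serves as an invariant set and as a \MAX-dominion, which is what makes ``at least two final classes'' enough to produce the forbidden disjoint pair --- and the handling of the boundary cases $\Delta=[n]$ and $R=[n]$, both of which are excluded precisely because the accompanying invariant set ($\Theta$, respectively $C$) is nonempty.
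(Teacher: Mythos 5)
Your proposal is correct and follows essentially the same route as the paper: both directions are derived from \Cref{cor:bounded-slice-spaces-convex} via the same dictionary (final classes are invariant sets and hence \MAX-dominions; invariant sets contain final classes; \Cref{lem:invariant-subsets} identifies complements of \MAX-dominions with invariant sets of $\Hcal_\infty^-(T)$), with the same case split on the number of final classes. The only differences are cosmetic — you spell out why a final class is a \MAX-dominion and why at least one final class exists, which the paper leaves implicit.
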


\begin{proof}
  According to \Cref{cor:bounded-slice-spaces-convex}, we need to show that in the one-player
  game $\breve{\Gamma}_\infty(T)$, player \MAX has a dominion disjoint from an invariant set if
  and only if the directed graph $\Gcal_\infty(T)$ has more than one final class, or a unique final
  class which does not have access to the whole set of nodes in $\Hcal_\infty^-(T)$.

  First suppose that $\Gcal_\infty(T)$ has two distinct final classes.
  Then these sets are both invariant in $\breve{\Gamma}_\infty(T)$.
  Since any invariant set is also a dominion  of \MAX, it follows that the dominion/invariant
  set condition holds.

  Next, assume that $\Gcal_\infty(T)$ has a unique final class $C$ and that
  $\reach(C,\Hcal_\infty^-(T))$ is not $[n]$.
  Let $\Delta = \Cpt{\reach(C,\Hcal_\infty^-(T))}$.
  The latter set and $C$ are nonempty and disjoint.
  Furthermore, $C$ is invariant and $\Delta$ is by construction a dominion of \MAX in 
  $\Gamma_\infty(T)$, hence in $\breve{\Gamma}_\infty(T)$.
  So the dominion/invariant set condition holds.

  Now, assume that $\Theta$ is invariant in $\breve{\Gamma}_\infty$(T) and that $\Delta$ is
  a dominion of player \MAX such that $\Theta \cap \Delta = \emptyset$.
  If $\Gcal_\infty(T)$ has a unique final class, let us denote it by $C$.
  Then we necessarily have $C \subset \Theta$, since any invariant set in $\breve{\Gamma}_\infty$
  contains a final class of $\Gcal_\infty(T)$.
  Hence, $C$ and $\Delta$ are disjoint, that is, $C \subset \cpt{\Delta}$, which yields
  $\reach(C,\Hcal_\infty^-(T)) \subset \reach(\cpt{\Delta},\Hcal_\infty^-(T)) =
  \cpt{\Delta} \neq [n]$.
\end{proof}

\subsubsection{Complexity aspects}
\label{sec:complexity-slice-spaces}

Given a monotone additively homogeneous self-map $T$ on $\R^n$, the basic issue under
consideration is to check whether the eigenproblem~\labelcref{eq:ergodic-equation}, or
its multiplicative counterpart, is solvable.
\Cref{thm:hypergraphs-slice-spaces} (or \Cref{cor:hypergraphs-convex-slice-spaces}
in the convex case) provides a combinatorial condition for this property to hold.
This condition can be effectively checked as soon as the action spaces in $\Gamma_\infty(T)$,
which arise in the definition of the hyperarcs of $\Hcal_\infty^\pm(T)$, can be identified.
This is possible when the limits $\lim_{\alpha \to \pm \infty} T_i(\alpha \unit_J)$
can be computed, which happens in general situations (see \Cref{sec:class-M}).

To set aside the latter problem, it is convenient to introduce the map $\Omega_\infty(T)$,
called {\em oracle}, which takes as input $(J,i,\pm)$ and returns a yes/no answer,
the answer being ``yes'' if and only if $T_i(\alpha \unit_J)$ tends to $\pm\infty$
when $\alpha$ goes to $\pm \infty$. 
A Turing machine with oracle $\Omega_\infty(T)$ is a Turing machine which can send a query
to $\Omega_\infty(T)$ and use the answer.
A call to the oracle is counted as one computational step of the Turing machine.
We refer the reader to \cite{AB09} for a detailed presentation of oracle Turing machines.

We will need the following result, which gives a bound for the time required to compute
the set of reachable nodes in $\Hcal_\infty^\pm(T)$ from any set.

\begin{lemma}
  For any set $J \subset [n]$, $\reach(J,\Hcal_\infty^\pm(T))$ can be computed in $O(n^2)$
  steps by a Turing machine with oracle $\Omega_\infty(T)$.
  \label{lem:reachability-computation}
\end{lemma}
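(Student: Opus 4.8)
The plan is to implement the standard recursive/iterative characterization of hypergraph reachability from the preliminaries subsection, and carefully account for the number of oracle calls. First I would fix the sign $\pm$ (the two cases are symmetric) and the starting set $J$, and maintain a growing set $R$ of nodes currently known to be reachable from $J$, initialized to $R \gets J$. At each round, I would scan every node $i \in [n] \setminus R$ and ask whether $i$ can be added, i.e.\ whether there is a hyperarc $(\tail,\{i\})$ with $\tail \subseteq R$. By the recursive definition of reachability, $i$ becomes reachable at this round precisely when $R$ itself is the tail of such a hyperarc, that is, when $i \notin R$ and $\lim_{\alpha \to \pm\infty} T_i(\alpha \unit_R) = \pm\infty$ — equivalently, when the oracle $\Omega_\infty(T)$ returns ``yes'' on input $(R, i, \pm)$. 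The key point, which I would state explicitly, is that one does \emph{not} need to enumerate all possible tails $\tail \subseteq R$: since $T$ is monotone, $\lim_{\alpha\to+\infty} T_i(\alpha\unit_\tail) = +\infty$ for some $\tail \subseteq R$ implies the same limit for $\tail = R$ (and dually in the $-$ case). So a single oracle query with tail $R$ suffices to test node $i$ at the current round.

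The iteration is: repeatedly make one full pass over $[n]\setminus R$, querying the oracle once per node with the current $R$ as tail; add to $R$ (after the pass, or immediately) every node for which the answer was ``yes''; stop when a pass adds no new node. Since $R$ strictly grows at each successful pass and $|R| \le n$, there are at most $n$ passes, each making at most $n$ oracle queries plus $O(n)$ bookkeeping, giving $O(n^2)$ steps on a Turing machine with oracle $\Omega_\infty(T)$. I would then argue correctness: the final $R$ is invariant in $\Hcal_\infty^\pm(T)$ (no node outside $R$ is the head of a hyperarc with tail inside $R$, since the last pass failed), it contains $J$, and by induction on the pass number every node added lies in $\reach(J,\Hcal_\infty^\pm(T))$; combined with the fact, recalled in the preliminaries, that $\reach(J,\Hcal_\infty^\pm(T))$ is the smallest invariant set containing $J$, this gives $R = \reach(J,\Hcal_\infty^\pm(T))$.

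The only mild subtlety — and the step I would be most careful about — is the monotonicity reduction that collapses ``some tail $\tail\subseteq R$'' to ``the tail $R$ itself'', because it is what keeps the query count at $O(n)$ per pass rather than exponential; here one uses $\unit_\tau \le \unit_R$ and monotonicity plus the fact that $T_i(\alpha\unit_R)\ge T_i(\alpha\unit_\tau)$ for $\alpha\ge 0$, so divergence to $+\infty$ is inherited by the larger tail (and symmetrically for $\alpha \le 0$ and the $-$ case). Everything else is routine bookkeeping, so I would keep that part brief. I would also remark that the per-round cost is dominated by the $n$ oracle calls, each counted as one step, so the machine-model technicalities do not affect the $O(n^2)$ bound.
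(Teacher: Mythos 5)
Your proposal is correct and takes essentially the same approach as the paper: iteratively enlarge the current set by one oracle query per outside node, using the current set itself as the tail, stopping when nothing new is added, and concluding via the minimality of $\reach(J,\Hcal_\infty^\pm(T))$ among invariant sets containing $J$. The monotonicity reduction you single out (one query with tail $R$ suffices in place of all tails $\tau\subseteq R$) is exactly what the paper uses implicitly when it declares $J_\ell$ invariant as soon as $T_i(\alpha\unit_{J_\ell})$ stays bounded for all $i\notin J_\ell$; making it explicit is a fine addition but does not change the argument.
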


\begin{proof}
  Set $J_1 = J$.
  If $T_i(\alpha \unit_{J_1})$ remains bounded as $\alpha \to \pm \infty$
  for all $i \notin J_1$, then $J_1$ is invariant in $\Hcal_\infty^\pm(T)$, meaning that
  $J_1 = \reach(I,\Hcal_\infty^\pm(T))$.
  Otherwise, define $J_2$ as the union of $J_1$ and all the nodes $i \notin J_1$
  for which $T_i(\alpha \unit_{J_1})$ tends to $\pm \infty$ as $\alpha \to \pm \infty$.
  Repeating the same steps, we arrive at a set $J_k$ for some integer $k \leq n$,
  which is invariant in $\Hcal_\infty^\pm(T)$ and contains $J$.
  Hence, we must have $\reach(J,\Hcal_\infty^\pm(T)) \subset J_k$, since
  $\reach(J,\Hcal_\infty^\pm(T))$ is the smallest invariant set containing $J$.
  The other inclusion being trivial, we get $J_k = \reach(J,\Hcal_\infty^\pm(T))$.

  Now observe that each step $\ell$ requires $|\cpt{(J_\ell)}|$ calls to the oracle
  $\Omega_\infty(T)$ (where $|X|$ denotes the cardinality of any set $X$) and that the number of
  elementary operations is linear with respect to the number of calls.
  Hence the result.
  In particular, the number of calls is bounded by $\sum_{\ell=1}^n \ell \leq n^2$.
\end{proof}

It readily follows from the definition of dominions that the problem of deciding
whether a set $J$ is a dominion can be solved in $O(|J|)$ steps by a Turing machine
with oracle $\Omega_\infty(T)$.
Furthermore, it is easily seen that the condition of \Cref{thm:hypergraphs-slice-spaces} boils
down to check that, for every $I \subset [n]$, either
$\reach(\cpt{I},\Hcal_\infty^+(T)) \neq \cpt{I}$ or $\reach(I,\Hcal_\infty^-(T)) = [n]$.
Then we get the following.

\begin{theorem}
  Let $T$ be a monotone additively homogeneous self-map of $\R^n$.
  The problem of deciding whether all slice spaces are bounded in Hilbert's seminorm
  can be solved in $O(2^n n^2)$ steps by a Turing machine with oracle $\Omega_\infty(T)$.
  \label{thm:boundedness-decision}
\end{theorem}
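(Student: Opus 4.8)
The plan is to combine the reformulation of \Cref{thm:existence-additive} given in \Cref{thm:hypergraphs-slice-spaces} with the reachability cost bound from \Cref{lem:reachability-computation}, and then to observe that the search for a witnessing pair of disjoint dominions collapses to a single loop over subsets. First I would recall the key simplification already noted in the text: by \Cref{lem:invariant-subsets}, a set $\Delta \subsetneq [n]$ is a dominion of \MIN (resp.\ \MAX) in $\Gamma_\infty(T)$ precisely when its complement is invariant in $\Hcal_\infty^+(T)$ (resp.\ $\Hcal_\infty^-(T)$), and since $\reach(K,\Hcal)$ is the smallest invariant set containing $K$, the set $\Delta$ is a \MIN-dominion iff $\reach(\cpt{\Delta},\Hcal_\infty^+(T)) = \cpt{\Delta}$. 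Consequently, for a \emph{fixed} candidate \MIN-dominion $I$, the best possible \MAX-dominion disjoint from $I$ is the \emph{maximal} one contained in $\cpt{I}$, namely the complement of $\reach(I,\Hcal_\infty^-(T))$; such a \MAX-dominion disjoint from $I$ exists iff $\reach(I,\Hcal_\infty^-(T)) \neq [n]$. Hence the dominion condition \emph{fails} (equivalently, some slice space is unbounded) iff there is a nonempty proper $I \subsetneq [n]$ with $\reach(\cpt{I},\Hcal_\infty^+(T)) = \cpt{I}$ \emph{and} $\reach(I,\Hcal_\infty^-(T)) \neq [n]$.

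The algorithm is then immediate: iterate over all subsets $I$ of $[n]$ (there are $2^n$ of them); for each one, test whether $I$ is a dominion of \MIN by computing $\reach(\cpt{I},\Hcal_\infty^+(T))$ and checking equality with $\cpt{I}$, and, if so, compute $\reach(I,\Hcal_\infty^-(T))$ and check whether it equals $[n]$; output ``some slice space is unbounded'' if a witnessing $I$ is found, and ``all slice spaces are bounded'' otherwise. Correctness is exactly the equivalence displayed above, which is just the contrapositive packaging of \Cref{thm:hypergraphs-slice-spaces} together with the maximality remark. For the running time, each iteration performs at most two reachability computations, each costing $O(n^2)$ oracle-Turing-machine steps by \Cref{lem:reachability-computation}, plus $O(n)$ bookkeeping for the set comparisons; summing over the $2^n$ subsets yields the claimed $O(2^n n^2)$ bound.

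There is essentially no deep obstacle here; the content has already been front-loaded into \Cref{thm:hypergraphs-slice-spaces} and \Cref{lem:reachability-computation}. The one point that deserves a careful sentence is the reduction from ``exists a pair of disjoint dominions'' to ``exists a single dominion $I$ of \MIN whose $\Hcal_\infty^-$-reach misses a node'': one must note that $\cpt{\reach(I,\Hcal_\infty^-(T))}$, when nonempty, is indeed invariant in $\Hcal_\infty^-(T)$ — because $\reach(I,\Hcal_\infty^-(T))$ is invariant, and the complement of an invariant set in these hypergraphs is again such that removing it leaves an invariant set (equivalently, it is a dominion of \MAX by \Cref{lem:invariant-subsets}) — and that it is disjoint from $I$ by construction, so it genuinely witnesses disjoint dominions. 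I would also remark, as the text already hints, that in the convex case one may instead invoke \Cref{cor:hypergraphs-convex-slice-spaces} and test a single final-class/reachability condition, which removes the exponential factor entirely; but for the general statement the $2^n$ enumeration over potential \MIN-dominions is what is being claimed, and nothing sharper is asserted.
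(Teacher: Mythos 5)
Your proposal is correct and follows essentially the same route as the paper: the paper likewise reduces the existence of disjoint dominions to checking, for each of the $2^n$ subsets $I$, whether $\reach(\cpt{I},\Hcal_\infty^+(T)) = \cpt{I}$ and $\reach(I,\Hcal_\infty^-(T)) \neq [n]$, and then charges $O(n^2)$ oracle steps per subset via \Cref{lem:reachability-computation}. One small wording slip: $\cpt{\reach(I,\Hcal_\infty^-(T))}$ is not itself invariant in $\Hcal_\infty^-(T)$ --- rather $\reach(I,\Hcal_\infty^-(T))$ is, and by \Cref{lem:invariant-subsets} this makes its complement a dominion of \MAX, which is exactly the conclusion you use.
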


This should be compared with the generalized Perron-Frobenius theorem of \cite{GG04}.
It is shown there that all the additive sub-eigenspaces
(i.e., the sets $\mathcal{A}^\beta := \{ x \in \R^n \mid T(x) \leq \beta e + x\}$ where
$\beta \in \R$) are bounded in Hilbert's seminorm if and only if a certain digraph constructed
by an aggregation procedure is strongly connected.
This leads to a simpler test, requiring only a polynomial number of calls to the oracle.
However, the condition checked in this way is only a sufficient one for the boundedness of
all the slice spaces.

Furthermore, the exponential bound in the above theorem cannot be reduced to a polynomial
bound unless P~$=$~NP. 
Indeed, a restricted version of this problem, concerning deterministic Shapley operators
with finite action spaces, reduces to the nonexistence of a nontrivial fixed point
of a monotone Boolean function, a problem shown to be coNP-hard by Yang and Zhao \cite{YZ04}.
See also \cite{AGH15} for more information on complexity issues.

\medskip
When $T$ is convex, the condition in \Cref{cor:hypergraphs-convex-slice-spaces}
requires the computation of the final classes of the directed graph $\Gcal_\infty(T)$.
This graph has $n$ nodes, and so, its strongly connected components can be found
in $O(n^2)$ steps, using Tarjan's algorithm.
This leads to the following bound.

\begin{corollary}
  \label{cor:boundedness-decision-convex}
  Let $T: \R^n \to \R^n$ be a convex monotone additively homogeneous map.
  The problem of deciding whether all slice spaces are bounded in Hilbert's seminorm
  can be solved in $O(n^2)$ steps by a Turing machine with oracle $\Omega_\infty(T)$.
\end{corollary}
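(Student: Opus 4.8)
The plan is to turn the combinatorial criterion of \Cref{cor:hypergraphs-convex-slice-spaces} into an explicit algorithm and count its steps in the oracle model. By that corollary, all slice spaces $\Acal_\alpha^\beta(T)$ are bounded in Hilbert's seminorm if and only if the digraph $\Gcal_\infty(T)$ has a unique final class $C$ and $\reach(C,\Hcal_\infty^-(T)) = [n]$. Accordingly, the algorithm will proceed in three phases: first build $\Gcal_\infty(T)$; then compute its strongly connected components and extract the final classes; and finally, if there is exactly one final class $C$, compute $\reach(C,\Hcal_\infty^-(T))$ and accept precisely when it equals $[n]$ (rejecting immediately if there are two or more final classes).

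First I would build $\Gcal_\infty(T)$. Recall that $\Gcal_\infty(T)$ has an edge from $i$ to $j$ exactly when $\lim_{\alpha\to+\infty} T_i(\alpha\unit_{\{j\}}) = +\infty$, which is the answer returned by the oracle $\Omega_\infty(T)$ on the query $(\{j\},i,+)$. Hence the whole adjacency structure of $\Gcal_\infty(T)$ is obtained by $n^2$ oracle calls together with $O(n^2)$ bookkeeping operations. Next, since $\Gcal_\infty(T)$ has $n$ vertices and at most $n^2$ edges, Tarjan's algorithm produces its condensation in $O(n^2)$ steps (no oracle call is needed here), and the final classes are read off as the sink components of the condensation within the same bound; counting them costs $O(n)$. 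If there are at least two, we reject. Otherwise, let $C$ be the unique final class: by \Cref{lem:reachability-computation}, $\reach(C,\Hcal_\infty^-(T))$ is computed in $O(n^2)$ steps by a Turing machine with oracle $\Omega_\infty(T)$, and testing $\reach(C,\Hcal_\infty^-(T)) = [n]$ is $O(n)$.

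Adding the costs of the three phases yields the claimed $O(n^2)$ bound. There is no real analytic content: the statement is essentially an accounting exercise on top of \Cref{cor:hypergraphs-convex-slice-spaces} and \Cref{lem:reachability-computation}. The only point deserving a moment's attention is the verification that each edge query for $\Gcal_\infty(T)$ is a single oracle call --- which holds because the query concerns $T_i$ evaluated along the single coordinate direction $\unit_{\{j\}}$, exactly the form the oracle $\Omega_\infty(T)$ accepts --- together with the observation that the purely graph-theoretic steps (strongly connected components, extraction of the sink components) do not consult the oracle and run in time quadratic in $n$ in the worst case of a dense graph.
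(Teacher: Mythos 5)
Your proposal is correct and follows the same route as the paper: the paper likewise derives the bound from \Cref{cor:hypergraphs-convex-slice-spaces} by constructing $\Gcal_\infty(T)$ via $n^2$ oracle queries, extracting the final classes with Tarjan's algorithm in $O(n^2)$ steps, and invoking \Cref{lem:reachability-computation} for the hypergraph reachability test. You have merely spelled out the accounting in more detail than the paper's brief justification.
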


\subsection{Uniqueness of eigenvectors}
\label{sec:hypergraphs-uniqueness}

\subsubsection{Hypergarphs and dominions}

The construction of the hypergraphs in \Cref{sec:hypergraphs-dominions-slice-spaces}
(which is based only on the action spaces of the game $\Gamma_\infty(T)$) can be readily
transposed to $\Gamma_u(T)$.
Thus, up to the game with which the hypergraphs are associated, the results in
\Cref{sec:hypergraphs-dominions-slice-spaces} provides a graph-theoretical characterization
of the dominion condition in $\Gamma_u(T)$.
We next briefly present these results.

Following the definition of $\Hcal_\infty^+(T)$ and $\Hcal_\infty^-(T)$, we associate to
any monotone additively homogeneous self-map $T$ of $\R^n$ and any point $u \in \R^n$,
two hypergraphs, $\Hcal_u^+(T)$ and $\Hcal_u^-(T))$ respectively, with set of nodes $[n]$ and
a hyperarc $(J,\{i\})$ in $\Hcal_u^+(T)$ (resp., $\Hcal_u^-(T)$) if $i \notin J$ and $\cpt{J}$
is not an action of player \MIN (resp., player \MAX) in $\Gamma_u(T)$ when in state $i$.
Equivalently, the hyperarcs are
\begin{itemize}
  \item in $\Hcal_u^+(T)$, the pairs $(J,\{i\})$ such that $i \notin J$ and
    \[
      \forall \varepsilon > 0 , \quad T_i(u + \varepsilon \unit_J) > T_i(u) ;
    \]
  \item in $\Hcal_u^-(T)$, the pairs $(J,\{i\})$ such that $i \notin J$ and
    \[
      \forall \varepsilon > 0 , \quad T_i(u - \varepsilon \unit_J) < T_i(u) .
    \]
\end{itemize}
Similarly to \Cref{sec:hypergraphs-dominions-slice-spaces}, a set of nodes $J \subsetneq [n]$ is
invariant in $\Hcal_u^+(T)$ (resp., $\Hcal_u^-(T)$) if and only if $\cpt{J}$ is a dominion of
player \MIN (resp., player \MAX) in $\Gamma_u(T)$.
This allows us to reformulate \Cref{thm:uniqueness-additive} in terms of hypergraph reachability.

\begin{theorem}
  \label{thm:hypergraphs-uniqueness}
  Let $T: \R^n \to \R^n$ be a monotone additively homogeneous map.
  Then an eigenvector $u$ of $T$ is not unique, up to an additive constant,
  if and only if there exists a pair $(I,J)$ of nonempty disjoint subsets of $[n]$ such that
  $\reach(\cpt{I},\Hcal_u^+(T)) = \cpt{I}$ and $\reach(\cpt{J},\Hcal_u^-(T)) = \cpt{J}$.
  \qed
\end{theorem}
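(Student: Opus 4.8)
The plan is to simply translate \Cref{thm:uniqueness-additive} through the dictionary established between invariant sets in the hypergraphs $\Hcal_u^\pm(T)$ and dominions in the game $\Gamma_u(T)$. First I would recall the key observation: by monotonicity of $T$, if player \MIN (resp.\ \MAX) can choose a set $I$ in state $i$ of $\Gamma_u(T)$, then he can choose any superset; and a nonempty set $\Delta$ is a dominion of \MIN (resp.\ \MAX) if and only if in every state $i \in \Delta$ the set $\Delta$ itself is an admissible action. Thus, for $J \subsetneq [n]$, the complement $\cpt{J}$ fails to be a dominion of \MIN exactly when there is some state $i \in \cpt{J}$ for which $\cpt{J}$ is \emph{not} an admissible \MIN-action, i.e.\ for which $T_i(u + \varepsilon \unit_J) > T_i(u)$ for all $\varepsilon > 0$ (using $\cpt{(\cpt{J})} = J$) — which is precisely the existence of a hyperarc $(J,\{i\})$ in $\Hcal_u^+(T)$ with $i \notin J$. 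Iterating this, $\reach(J, \Hcal_u^+(T)) = J$ (i.e.\ $J$ is invariant) if and only if no such hyperarc emanates from $J$, if and only if $\cpt{J}$ is an admissible \MIN-action in every state of $\cpt{J}$, if and only if $\cpt{J}$ is a dominion of \MIN. The analogous statement holds for $\Hcal_u^-(T)$ and \MAX, using condition \labelcref{eq:uniqueness-action-max}. This is the content of the sentence preceding the theorem in the excerpt, which I would state as a short lemma or simply invoke inline.

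Given this equivalence, the proof reduces to a one-line rephrasing. By \Cref{thm:uniqueness-additive}, the eigenvector $u$ is \emph{not} unique (up to an additive constant) if and only if \MIN and \MAX have disjoint dominions $\Delta_{\MIN}$ and $\Delta_{\MAX}$ in $\Gamma_u(T)$. Setting $I = \Delta_{\MIN}$ and $J = \Delta_{\MAX}$, these are nonempty and disjoint, and $\Delta_{\MIN}$ being a dominion of \MIN is equivalent to $\reach(\cpt{I}, \Hcal_u^+(T)) = \cpt{I}$, while $\Delta_{\MAX}$ being a dominion of \MAX is equivalent to $\reach(\cpt{J}, \Hcal_u^-(T)) = \cpt{J}$. (Note that $I, J \subsetneq [n]$ automatically, since a dominion is nonempty and the two are disjoint, so the proper-subset hypothesis needed for \Cref{lem:invariant-subsets}-type reasoning is satisfied.) Conversely, any such pair $(I,J)$ yields a pair of disjoint dominions. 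This gives both directions.

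The only mildly delicate point — and the one I would be most careful about — is the handling of the boundary cases $I = [n]$ or $J = [n]$ in the reachability reformulation: the equivalence "$\cpt{J}$ is a dominion $\iff J$ is invariant in $\Hcal_u^+(T)$" was stated for $J \subsetneq [n]$, and one should check that the degenerate possibility $\cpt{I} = \emptyset$ or $\cpt{J} = \emptyset$ does not arise, which it does not precisely because dominions are required to be nonempty and a pair of disjoint dominions forces each to be a proper subset. Beyond that, everything is a direct transcription; there is no real obstacle, since all the analytic work was already done in the proof of \Cref{thm:uniqueness-additive} and the combinatorial dictionary is the same one used in \Cref{sec:existence-hypergraphs}. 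I would therefore present the proof as: (i) recall the dominion $\leftrightarrow$ invariant-set correspondence for $\Hcal_u^\pm(T)$, mirroring \Cref{lem:invariant-subsets}; (ii) apply \Cref{thm:uniqueness-additive}; (iii) conclude.
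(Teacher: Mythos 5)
Your proposal is correct and follows exactly the paper's route: the paper likewise treats \Cref{thm:hypergraphs-uniqueness} as an immediate corollary of \Cref{thm:uniqueness-additive}, obtained through the correspondence between invariant proper subsets of $\Hcal_u^{\pm}(T)$ and dominions of the respective players in $\Gamma_u(T)$, mirroring \Cref{lem:invariant-subsets}. Your attention to the degenerate case $\cpt{I}=\emptyset$ or $\cpt{J}=\emptyset$ is sound and consistent with the paper's restriction of the invariance/dominion equivalence to sets $J\subsetneq[n]$.
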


\subsubsection{The special case of convex maps}

When the map $T$ is convex, the latter characterization simplifies, along the same lines
as \Cref{sec:convexity-graph-slice-spaces}.
To that purpose, we introduce the ``local'' directed graph $\Gcal_u(T)$, with set of vertices
$[n]$ and an edge from $i$ to $j$ if
\[
  \forall \varepsilon > 0 , \quad T_i(u + \varepsilon \unit_{\{j\}}) > T_i(u) .
\]
Then the relation between the dominions of player \MIN in $\Gamma_u(T)$ and the final classes of
$\Gcal_u(T)$ is the same as with the game $\Gamma_\infty(T)$ and the digraph $\Gcal_\infty(T)$.
Specifically, any final class of $\Gcal_u(T)$ is a dominion of player \MIN in $\Gamma_u(T)$ and
conversely any dominion of \MIN in the latter game contains a final class of $\Gcal_u(T)$. 
This is a direct consequence of the definitions (recall that $\Delta$ is a dominion of
player \MIN in $\Gamma_u(T)$ if and only if there is some $\varepsilon > 0$ such that
$T_i(u + \varepsilon \unit_\cpt{\Delta}) = T_i(u)$ for all $i \in \Delta$) and the following
inequalities, which hold for every $\varepsilon > 0$, every $i \in [n]$, every subset
$J \neq [n]$ and every $j \notin J$:
\[
  T_i (u) \leq T_i(u + \varepsilon \unit_{\{j\}}) \leq T_i(u + \varepsilon \unit_\cpt{J}) \leq
  \frac{1}{|\cpt{J}|} \sum_{\ell \notin J} T_i(u + |\cpt{J}| \varepsilon \, \unit_{\{\ell\}}) .
\]
We mention that the first two inequalities come from the monotonicity of $T$ whereas
the last one stems from its convexity.
Consequently, \Cref{cor:hypergraphs-convex-slice-spaces} can be transposed to
the problem of uniqueness of the eigenvector.

\begin{corollary}
  Let $T: \R^n \to \R^n$ be a convex monotone additively homogeneous map.
  Then an eigenvector $u$ of $T$ is unique, up to an additive constant,
  if and only if the digraph $\Gcal_u(T)$ has a unique final class $C$ and
  $\reach(C,\Hcal_u^-(T)) = [n]$.
\end{corollary}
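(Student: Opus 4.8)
The plan is to prove the contrapositive, i.e.\ to characterize when $u$ is \emph{not} the unique eigenvector up to an additive constant. First I would apply \Cref{thm:uniqueness-additive} (or its hypergraph form \Cref{thm:hypergraphs-uniqueness}): $u$ fails to be unique exactly when there are disjoint dominions $I$ of \MIN and $J$ of \MAX in $\Gamma_u(T)$. Using the correspondence recalled just before the statement --- every final class of $\Gcal_u(T)$ is a dominion of \MIN in $\Gamma_u(T)$, and conversely every dominion of \MIN contains such a final class --- I would replace the \MIN-dominion $I$ by a final class $C$ of $\Gcal_u(T)$ contained in it; so $u$ is not unique iff some final class $C$ of $\Gcal_u(T)$ is disjoint from some dominion of \MAX. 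Translating \MAX-dominions into invariant sets of $\Hcal_u^-(T)$ (the analogue, stated in \Cref{sec:hypergraphs-uniqueness}, of \Cref{lem:invariant-subsets}) and using that $\reach(C,\Hcal_u^-(T))$ is the smallest invariant set of $\Hcal_u^-(T)$ containing $C$, the complement of any \MAX-dominion disjoint from $C$ must contain $\reach(C,\Hcal_u^-(T))$; hence such a dominion exists iff $\reach(C,\Hcal_u^-(T))\neq[n]$. This reduces everything to the claim: $u$ is not unique iff there is a final class $C$ of $\Gcal_u(T)$ with $\reach(C,\Hcal_u^-(T))\neq[n]$.

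It then remains to see that this is exactly the negation of the stated condition, namely ``$\Gcal_u(T)$ has at least two final classes, or a unique final class $C$ with $\reach(C,\Hcal_u^-(T))\neq[n]$'' (a finite digraph always has at least one final class, so this dichotomy is exhaustive). When $\Gcal_u(T)$ has a unique final class the equivalence is immediate. When $\Gcal_u(T)$ has two distinct, hence disjoint, final classes $C_1,C_2$, I would show that $\reach(C_1,\Hcal_u^-(T))$ cannot contain $C_2$, so it is a proper subset of $[n]$ and $C_1$ already witnesses the condition. Assume the contrary and run the iterative computation of $\reach(C_1,\Hcal_u^-(T))$ as in the proof of \Cref{lem:reachability-computation}; let $j\in C_2$ be a node of $C_2$ added at the first stage at which the reachable set meets $C_2$. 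Then $j$ was added through a hyperarc $(J',\{j\})$ of $\Hcal_u^-(T)$ with $J'$ contained in the previous set, hence $J'\cap C_2=\emptyset$; by definition of $\Hcal_u^-(T)$ this means $T_j(u-\varepsilon\unit_{J'})<T_j(u)$ for all $\varepsilon>0$. On the other hand $C_2$ is a final class of $\Gcal_u(T)$, so it is a dominion of \MIN, and at state $j$ player \MIN may choose $C_2$: there is $\varepsilon_0>0$ with $T_j(u+\alpha\unit_\cpt{C_2})=T_j(u)$ for $\alpha\in[0,\varepsilon_0]$, whence $T_j(u+\alpha\unit_{J'})=T_j(u)$ on $[0,\varepsilon_0]$ by monotonicity, since $J'\subset\cpt{C_2}$.

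The main obstacle is to turn these two facts into a contradiction, and this is exactly where convexity of $T$ is used. I would study the one-variable function $\psi(\alpha)=T_j(u+\alpha\unit_{J'})$, which is convex (a restriction of the convex map $T_j$ to a line) and nondecreasing (as $\unit_{J'}\geq 0$ and $T_j$ is monotone), and is constant on $[0,\varepsilon_0]$. A convex function whose right derivative vanishes at $0$ has nonpositive left derivatives there, while nondecreasingness forces them to be nonnegative; hence $\psi$ is constant on $(-\infty,0]$, giving $T_j(u-\varepsilon\unit_{J'})=T_j(u)$ for all $\varepsilon>0$ and contradicting the hyperarc condition. Combining the reductions of the first paragraph with this dichotomy then yields the corollary. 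Note that the ``convex'' simplification for existence, \Cref{cor:hypergraphs-convex-slice-spaces}, could invoke the one-player game $\breve\Gamma_\infty(T)$; no such reduction is available for the two-player game $\Gamma_u(T)$, and the above argument on $\psi$ is what replaces it.
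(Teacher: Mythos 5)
Your argument is correct and follows essentially the same route as the paper, which only sketches this corollary as a ``transposition'' of \Cref{cor:hypergraphs-convex-slice-spaces}: you reduce to \Cref{thm:uniqueness-additive} via the final-class/\MIN-dominion correspondence, convert \MAX-dominions into invariant sets of $\Hcal_u^-(T)$, and handle the two-final-classes case separately. The one step you make explicit that the paper leaves implicit --- that for convex $T$ a set in which \MIN can stay in $\Gamma_u(T)$ is automatically one in which \MAX can stay, via the one-variable convexity argument on $\psi$ --- is precisely the ingredient needed for that case, and it is sound.
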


\subsubsection{Complexity aspects}

Up to the definition of the oracle, the complexity results stated in
\Cref{sec:complexity-slice-spaces} also readily adapt to the problem of uniqueness
of the eigenvector.
We next briefly state them.

Given a monotone additively homogeneous map $T: \R^n \to \R^n$ and a point $u \in \R^n$,
we introduce the oracle $\Omega_u(T)$ which takes as input a tuple $(J,i,\pm)$ and
returns a yes/no answer, the answer being ``yes'' if and only if
$\pm T_i(u \pm \varepsilon \unit_J) > \pm T_i(u)$ for all $\varepsilon > 0$.
Such an oracle allows us to check if a pair $(J,\{i\})$ is a hyperarc of $\Hcal_u^\pm(T)$,
or if $(i,j)$ is an edge of $\Gcal_u(T)$.
Then we have the following, which is a straightforward adaptation of
\Cref{thm:boundedness-decision} and \Cref{cor:boundedness-decision-convex},
respectively.

\begin{theorem}
  Assume that $T: \R^n \to \R^n$ is monotone and additively homogeneous and let $u$ be
  an eigenvector.
  \begin{enumerate}
    \item The problem of deciding whether $u$ is the unique eigenvector of $T$, up to an additive
      constant, can be solved in $O(2^n n^2)$ steps by a Turing machine with oracle $\Omega_u(T)$.
    \item If $T$ is convex, the latter problem can be solved in $O(n^2)$ steps by a Turing
      machine with oracle $\Omega_u(T)$.
  \end{enumerate}
\end{theorem}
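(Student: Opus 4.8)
The plan is to transpose, essentially verbatim, the arguments of \Cref{sec:complexity-slice-spaces}, with the game $\Gamma_\infty(T)$ replaced by the local game $\Gamma_u(T)$, the hypergraphs $\Hcal_\infty^\pm(T)$ by $\Hcal_u^\pm(T)$, the digraph $\Gcal_\infty(T)$ by $\Gcal_u(T)$, and the oracle $\Omega_\infty(T)$ by $\Omega_u(T)$. The starting point is that, by definition of $\Omega_u(T)$, a single query to this oracle decides whether a given pair $(J,\{i\})$ is a hyperarc of $\Hcal_u^+(T)$ (equivalently, whether $\cpt{J}$ is \emph{not} an admissible action of \MIN in state $i$ of $\Gamma_u(T)$) or of $\Hcal_u^-(T)$, and likewise whether $(i,j)$ is an edge of $\Gcal_u(T)$. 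Consequently, the proof of \Cref{lem:reachability-computation} applies unchanged: for every $J \subset [n]$, the set $\reach(J,\Hcal_u^\pm(T))$ is computed in $O(n^2)$ steps by a Turing machine with oracle $\Omega_u(T)$, via the same monotone ``frontier expansion'' which stabilizes after at most $n$ rounds, each round costing at most $n$ oracle calls.

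For assertion (i), I would apply \Cref{thm:hypergraphs-uniqueness}: the eigenvector $u$ fails to be unique, up to an additive constant, exactly when there is a pair $(I,J)$ of nonempty disjoint subsets of $[n]$ with $\reach(\cpt{I},\Hcal_u^+(T)) = \cpt{I}$ and $\reach(\cpt{J},\Hcal_u^-(T)) = \cpt{J}$. As in the paragraph preceding \Cref{thm:boundedness-decision}, this is tested by a single loop over the nonempty subsets $I \subset [n]$: for each such $I$, compute $\reach(\cpt{I},\Hcal_u^+(T))$ and check whether it equals $\cpt{I}$ (equivalently, whether $I$ is a dominion of \MIN in $\Gamma_u(T)$); if so, compute $\reach(I,\Hcal_u^-(T))$ and test whether it is a proper subset of $[n]$, using that $\Cpt{\reach(I,\Hcal_u^-(T))}$ is the largest dominion of \MAX disjoint from $I$, so that a dominion of \MAX disjoint from $I$ exists if and only if $\reach(I,\Hcal_u^-(T)) \neq [n]$. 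Report ``$u$ not unique'' as soon as some $I$ passes both tests, and ``$u$ unique'' otherwise. Each of the $O(2^n)$ iterations costs $O(n^2)$ steps by the adapted \Cref{lem:reachability-computation}, for a total of $O(2^n n^2)$.

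For assertion (ii), with $T$ convex I would instead invoke the convex specialization of \Cref{thm:hypergraphs-uniqueness} stated just above: $u$ is unique, up to an additive constant, if and only if $\Gcal_u(T)$ has a unique final class $C$ and $\reach(C,\Hcal_u^-(T)) = [n]$. The digraph $\Gcal_u(T)$ on the $n$ nodes $[n]$ is built with $O(n^2)$ queries to $\Omega_u(T)$; its strongly connected components, hence its final classes (the terminal components of the condensation), are obtained in $O(n^2)$ steps by Tarjan's algorithm; and if there is a unique final class $C$, the test $\reach(C,\Hcal_u^-(T)) = [n]$ takes $O(n^2)$ steps by the adapted \Cref{lem:reachability-computation}. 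The total running time is $O(n^2)$.

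The proof involves no real difficulty: it only recycles the reductions of \Cref{sec:complexity-slice-spaces}. The one step worth a word of care is, in (i), the passage from the ``there exist disjoint dominions'' condition of \Cref{thm:hypergraphs-uniqueness} --- quantified over pairs $(I,J)$ --- to the single loop over $I$ above; this rests on the fact that $\reach(I,\Hcal_u^-(T))$ is the smallest $\Hcal_u^-(T)$-invariant set containing $I$, so that its complement is the largest dominion of \MAX avoiding $I$. This is exactly the argument used for the slice-space version, and it transposes without change.
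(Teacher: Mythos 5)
Your proposal is correct and follows exactly the route the paper intends: the paper itself presents this theorem as a "straightforward adaptation" of \Cref{thm:boundedness-decision} and \Cref{cor:boundedness-decision-convex}, obtained by substituting $\Gamma_u(T)$, $\Hcal_u^\pm(T)$, $\Gcal_u(T)$ and $\Omega_u(T)$ for their $\infty$-counterparts, which is precisely the transposition you carry out. Your write-up is in fact more detailed than the paper's, in particular in making explicit the reduction of the existential condition of \Cref{thm:hypergraphs-uniqueness} to a single loop over subsets $I$ via the maximality of $\Cpt{\reach(I,\Hcal_u^-(T))}$ among dominions of \MAX disjoint from $I$.
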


\section{Applications}
\label{applications}

\subsection{Stochastic games}
\label{sec:stochgames}

A two-player zero-sum stochastic game $\Gamma$ is described by a state space, which we assume
here to be finite, say $[n]$; by action spaces, $A$ for Player I and $B$ for Player II;
by a payoff function $r: [n] \times A \times B \to \R$; and by a transition function $\rho$
from $[n] \times A \times B$ to the set of probabilities over $[n]$.
At each stage $\ell$, given the current state $i$, Player I (resp., II) chooses an action
$a$ in $A$ (resp., $b$ in $B$).
This incurs a stage payoff $r_\ell = r(i,a,b)$ paid by Player I to Player II, and the next state
is drawn according to the distribution $\rho(\cdot \mid i,a,b)$.
In the game with imperfect information, the two players play simultaneously,
whereas in the perfect information game, one assumes that Player II selects
a current action after having observed the previous action of Player I,
and similarly for Player I. 

Given an initial state $i$ and a finite number $k$ of stages, Player I aims at minimizing
the Ces\`aro mean $\frac{1}{k} \sum_{\ell=0}^{k-1} g_\ell$, whereas Player II wants to
maximize it.
Under standard assumptions, in the imperfect information case, the $k$-stage game, played with randomized strategies, has a {\em value}, denoted by $v^k_i$,
equal to the unique payoff achieved (resp., approached) by Nash equilibria
(resp., $\varepsilon$-Nash equilibria). In the perfect information case, 
the value does exist even if we force the players to use deterministic strategies. A standard problem is to understand the asymptotic behavior of the value vector $v^k$.
We refer the reader to \cite{NS03} for background on stochastic games.

Using a dynamic programming principle, the value vector $v^k$ can be computed recursively
by means of the so-called ``Shapley operator'' of $\Gamma$.
The latter is a monotone additively homogeneous map $T: \R^n \to \R^n$ whose $i$th coordinate
is given by
\[
  T_i(x) = \inf_{a \in A} \sup_{b \in B} \; \bigg\{ r(i,a,b) +
    \sum_{j=1}^n x_j \, \rho(j \mid i,a,b) \bigg\} \enspace .
\]
In the imperfect information case, one generally assumes
that the $\inf$ and $\sup$ operators commute (which is guaranteed
by standard convexity/compactness assumptions).
In contrast, when dealing with a game with perfect information, 
the $\inf$ and $\sup $ operators need not commute. 
In both settings, the value vector is determined by the recursive formula
\[
  v^0 = 0 \quad \text{and} \quad (k+1) v^{k+1} = T(k v^k) .
\]

It is straightforward to check that if the ergodic equation~\labelcref{eq:ergodic-equation}
is solvable, then the {\em mean payoff} vector, given by
$\lim_{k \to \infty} v^k = \lim_{k \to \infty} \frac{1}{k} T^k(0)$, exists
and is equal to the constant vector $\lambda \unit$. 

\medskip
In \cite{AGH15}, the solvability of the ergodic equation~\labelcref{eq:ergodic-equation} has been
studied for Shapley operators of stochastic games with a bounded payoff function.
The results of \Cref{sec:existence,sec:existence-hypergraphs} extend the ones
in \cite{AGH15} (see in particular Theorems~3.1 and~5.3).
The following example illustrates the suboptimality of the latter results by exhibiting
a stochastic game (with unbounded payoffs) for which the Shapley operator has all
its slice spaces bounded in Hilbert's seminorm whereas its recession operator
(see the definition below) has nontrivial fixed points.

However, it is worth mentioning that if we assume that the action spaces of a stochastic game
$\Gamma$ with Shapley operator $T$ are compact and that the payoff and transition functions are
continuous (hence bounded), then the dominion condition in \Cref{thm:existence-additive},
which applies to the ``abstract'' game $\Gamma_\infty(T)$, is equivalent to the dominion condition
in the initial game $\Gamma$ (see \cite[Prop.~5.1]{AGH15}).
Likewise, it is possible to give a game-theoretical interpretation of the hypergraphs
$\Hcal_\infty^\pm(T)$:
\begin{itemize}
  \item a pair $(J,\{i\})$ is a hyperarc of $\Hcal_\infty^-(T)$ (resp., $\Hcal_\infty^+(T)$)
    if and only if, in the game $\Gamma$, Player I (resp., Player II) can force the state
    to move from $i$ to $J$ with positive probability;
  \item $\reach(J,\Hcal_\infty^-(T))$ (resp., $\reach(J,\Hcal_\infty^+(T))$) represents
    all the states from which $J$ can be made accessible by Player I (resp., Player II)
    in finite time, with positive probability.
\end{itemize}

\begin{example}
  We consider a stochastic game with unbounded payments inspired by the classical
  Blackmailer's Dilemma (see \cite{Whi83}).
  In the latter, the amount asked by a blackmailer to a victim influences the probability that
  the victim becomes resistant.
  The dynamic programming operator of the game is the monotone additively homogeneous map
  $T: \R^3 \to \R^3$ given by
  \begin{equation}
    \label{eq:blackmailer}
    T(x) = \left(
    \begin{gathered}
      \sup_{0 < p \leq 1} \big\{ \log p + p ( x_2 \wedge x_3 ) + (1-p) x_1 \big\} \\
      \inf_{0 < p \leq 1} \big\{ -\log p + p x_3 + (1-p) x_1 \big\} \\
      x_3
    \end{gathered}
    \right) ,
  \end{equation}
  where $\wedge$ stands for $\min$.
  This game has three states:
  the first player (Player I) partially controls state $1$, the second player
  (Player II) controls state $2$, and state $3$ is an absorbing state (i.e., a state in which
  the dynamics is stationary, whatever actions the players choose).
  More precisely, in state $1$, Player I chooses an action $p \in (0,1]$ and
  receives $\log p$ from Player II.
  Then, with probability $1-p$, the next state remains $1$, and with probability $p$,
  it is chosen by Player II between state $2$ and state $3$.
  Thus, maximizing the one-day payoff would lead to select $p=1$, but this leads to leave
  state $1$ with probability one.
  A dual interpretation applies to Player II in state $2$.
  
  Let us find out whether the optimality equation~\labelcref{eq:ergodic-equation} has a solution,
  and for that purpose, let us construct the ``abstract'' stochastic game $\Gamma_\infty(T)$
  defined in \Cref{sec:existence}.
  In order to determine the action spaces of $\Gamma_\infty(T)$, it is convenient to notice that
  $T_1(x) = h((x_2 \wedge x_3) - x_1) + x_1$ and that $T_2(x) = -h(x_1-x_3)+x_1$, where $h$ is
  the real-valued function defined by $h(z) = \sup_{0 < p \leq 1} \{ \log p + p z \}$.
  Further note that $h$ satisfies $h(z) = -1-\log(-z)$ for $z \leq -1$, and $h(z)=z$
  for $z \geq -1$.
  Then we get that the sets of actions of player \MIN in $\Gamma_\infty(T)$ are
  \begin{itemize}
    \item in state 1: $\{1,2\}$, $\{1,3\}$, $\{1,2,3\}$;
    \item in state 2: $\{3\}$, $\{1,3\}$, $\{2,3\}$, $\{1,2,3\}$;
    \item in state 3: $\{3\}$, $\{1,3\}$, $\{2,3\}$, $\{1,2,3\}$.
  \end{itemize}
  As for player \MAX, his action sets are
  \begin{itemize}
    \item in state 1: $\{2,3\}$, $\{1,2,3\}$;
    \item in state 2: $\{1,3\}$, $\{1,2,3\}$;
    \item in state 3: $\{3\}$, $\{1,3\}$, $\{2,3\}$, $\{1,2,3\}$.
  \end{itemize}
  Hence, the dominions of \MIN in $\Gamma_\infty(T)$ are $\{3\}$, $\{1,3\}$, $\{2,3\}$
  and $\{1,2,3\}$, whereas the dominions of \MAX are $\{3\}$ and $\{1,2,3\}$.
  It follows that the dominion condition is not satisfied since every two dominions of \MIN
  and \MAX, respectively, have a nonempty intersection.
  So, according to \Cref{thm:existence-additive}, all the slice spaces of
  $T$ are bounded in Hilbert's seminorm.
  As a consequence, the ergodic equation~\labelcref{eq:ergodic-equation} is solvable
  for all operators $g+T$ with $g \in \R^3$.
  
  \medskip
  Alternatively, one may construct the hypergraphs $\Hcal_\infty^\pm(T)$ associated with $T$.
  A concise representation of these hypergraphs is provided in \Cref{fig:Hpm}.
  Only the (hyper)arcs with minimal tail (with respect to the inclusion partial order)
  are represented.
  For instance, there is no arc from $\{2\}$ to $\{1\}$ in $\Hcal_\infty^+(T)$ since
  $T_1(\alpha \unit_2) = 0$ for all $\alpha \geq 0$.
  However, there is a hyperarc from $\{2,3\}$ to $\{1\}$, since
  $T_1(\alpha \unit_{\{2,3\}}) = \alpha$ for all $\alpha \geq 0$,
  which yields $\lim_{\alpha \to +\infty} T_1(\alpha \unit_{\{2,3\}}) = +\infty$.
  
  The nontrivial invariant subsets of $\Hcal_\infty^+(T)$ are $\{1\}$, $\{2\}$ and $\{1,2\}$,
  whereas for $\Hcal_\infty^-(T)$, the only nontrivial invariant subset is $\{1,2\}$.
  Hence, for every pair of nontrivial invariant subsets in $\Hcal_\infty^+(T)$ and
  $\Hcal_\infty^-(T)$, respectively, the intersection of their complements is nonempty.
  The conclusion then follows from \Cref{thm:hypergraphs-slice-spaces}.
  
  \begin{figure}[htbp]
    \begin{center}
      \begin{minipage}[b]{0.33\linewidth}
        \centering
        \begin{tikzpicture}
          [scale=1,on grid,auto,bend angle=40,
            state/.style={circle,draw,inner sep=0pt,minimum size=0.4cm},
            head/.style={<-,>=angle 60},
            tail/.style={->,>=angle 60},
          hyperedge/.style={>=angle 60}]
          \node[state] (s1) at (0,0) {$1$};
          \node[state] (s2) at (-0.75,1.3) {$2$};
          \node[state] (s3) at (-1.5,0) {$3$}
          edge [tail,bend right,black!0] (s1)
          edge [tail,bend left] (s2);
          \hyperedgewithangles[0.35][$(hyper@tail)!0.65!(hyper@head)$]{s2/-90,s3/30}{-30}{s1/150};
          \node[state,color=black!0] (s5) at (-2.25,+1.5) [label=below:{$\Hcal_\infty^+(T)$}] {};
        \end{tikzpicture}
      \end{minipage}
      \hspace{3em}
      \begin{minipage}[b]{0.33\linewidth}
        \centering
        \begin{tikzpicture}
          [scale=1,on grid,bend angle=40,auto,
            state/.style={circle,draw,inner sep=0pt,minimum size=0.4cm},
            head/.style={<-,>=angle 60},
          tail/.style={->,>=angle 60}]
          \node[state] (s1) at (0,0) {$1$};
          \node[state] (s2) at (-0.75,1.3) {$2$}
          edge [tail,bend left] (s1)
          edge [head, bend right] (s1);
          \node[state] (s3) at (-1.5,0) {$3$}
          edge [tail,bend right] (s1)
          edge [tail,bend left] (s2);
          \node[state,color=black!0] (s5) at (-2.25,+1.5) [label=below:{$\Hcal_\infty^-(T)$}] {};
        \end{tikzpicture}
      \end{minipage}
    \end{center}
    \caption{The hypergraphs $\Hcal_\infty^\pm(T)$ associated with $T$~\labelcref{eq:blackmailer}.}
    \label{fig:Hpm}
  \end{figure}
  
  We finally mention that the same conclusion cannot be obtained from
  the results in \cite{GG04,CCHH10,AGH15}.
  Indeed, in these references, the solvability of the ergodic equation, or alternatively
  the boundedness of all the slice spaces, holds if the {\em recession operator} of $T$,
  the self-map of $\R^n$ defined by $\hat{T}(x) = \lim_{k \to \infty} k^{-1} \, T(k x)$,
  has only constant fixed points (i.e., proportional to the unit vector $\unit$).
  Here, the recession operator is given by
  \[
    \hat{T}(x) = 
    \begin{pmatrix}
      x_1 \vee (x_2 \wedge x_3) \\ x_1 \wedge x_3 \\ x_3
    \end{pmatrix} .
  \]
  Since any vector $\transpose{(\alpha,0,0)}$ with $\alpha \geq 0$ is a fixed point of $\hat{T}$,
  then the sufficient condition appearing in the latter references is not satisfied.
\end{example}

\subsection{Generalized means}
\label{sec:class-M}

We next apply our results to the class of generalized means considered in
\cite{Nus88,LN12,LLN18}.
For any scalar $r \in \R \setminus \{0\}$ and any stochastic vector $\sigma \in \R^n$
(i.e., $\sigma_i \geq 0$ for all indices $i \in [n]$ and $\sum_i \sigma_i = 1$),
let $M_{r \sigma}(x)$ be the $(r,\sigma)$-mean of any vector $x \in \Rplus^n$, defined by
\[
  M_{r \sigma}(x) := \Bigg( \sum_{i \in [n]} \sigma_i x_i^r \Bigg)^{1/r} .
\]
We let $\supp(\sigma) := \{ i \in [n] \mid \sigma_i > 0 \}$ be the support of $\sigma$ and
define, by continuity,
\begin{align*}
  M_{0 \sigma}(x) & := \prod_{i \in \supp(\sigma)} x_i^{\sigma_i} , \\
  M_{+\infty \sigma}(x) & := \max_{i \in \supp(\sigma)} x_i , \\
  M_{-\infty \sigma}(x) & := \min_{i \in \supp(\sigma)} x_i .
\end{align*}
When $\sigma$ is the uniform probability vector in $\Rplus^n$ (i.e., with entries equal
to $1/n$), we write $M_{r}(x) := M_{r \sigma}(x)$ for brevity.

We define the set $\Mbar_{n1}$ consisting of maps $\R_+^n\to \R$ given by well-formed
expressions involving the mean operations, the multiplication by a nonnegative scalar,
and the variables $x_1, \dots, x_n$.
We define $\Mbar_{n n}$ to be the set of maps $\R_+^n\to \R_+^n$ whose coordinates
belong to $\Mbar_{n1}$. 
For instance, the map
\begin{align}
  h (x_1,x_2) & =
  M_{+\infty} \Big( M_{-3} \big( M_{-\infty}(x_1,2x_2), \pi x_1 \big),
  18 M_{0,(1/4,3/4)}(x_1,x_2) \Big)
  \label{e-expr} \\
  & = \max \Big( \big( \min(x_1,2x_2)^{-3}+(\pi x_1)^{-3} \big)^{-1/3},
  18 \sqrt[4]{x_1x_2^3} \Big)
  \nonumber
\end{align}
belongs to $\Mbar_{21}$ and the map $f(x_1,x_2) = ( h(x_1, x_2), x_2 )$ belongs to $\Mbar_{22}$.

The {\em signature} of a well-formed expression defining a map $f$ in $\Mbar_{n1}$ is
the map of $\Mbar_{n1}$ obtained by applying the following operations to this expression:
\begin{itemize}
  \item delete the multiplicative constants;
  \item replace every occurrence of $M_{r \sigma}$ with $r>0$ (resp., $r<0$) by
    $M_{+\infty \sigma}$ (resp., $M_{-\infty \sigma}$);
  \item replace every occurrence of $M_{0 \sigma}$ by the uniform geometric mean of
    the arguments appearing in the support of $\sigma$.
\end{itemize}
For instance, the signature of the expression in~\labelcref{e-expr} is the map
\begin{equation*}
  (x_1, x_2) \mapsto \max \Big( \min \big( \min(x_1, x_2), x_1 \big) , \sqrt{x_1x_2} \Big) =
  \sqrt{ x_1 x_2} .
\end{equation*}
The signature of a vector-valued expression is defined entrywise.

\begin{theorem}
  The validity of the dominion condition (see \Cref{it:dominion-condition} in
  \Cref{thm:existence-multiplicative}) for a map $f\in \Mbar_{nn}$ given by a well-formed
  expression depends only of the signature of this expression.
  Moreover, the dominion condition holds for $f$ if and only if all the maps
  $g \in \Mbar_{nn}$ that share a common signature with $f$ have a positive eigenvector. 
\end{theorem}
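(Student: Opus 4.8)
The plan is to prove both assertions by showing that the action spaces of the abstract game $\Gamma_\infty(f)$ — and hence the dominion condition — are determined by the signature of the defining expression, and then to feed this into \Cref{thm:existence-multiplicative} and \Cref{cor:solvability-eigenproblem}. First I would pass through logarithmic glasses to the conjugate map $T=\log\circ f\circ\exp$, whose coordinate functions $T_i$ are obtained from the given well-formed expression via the identity $\log M_{r\sigma}(\exp x)=\tfrac1r\log\sum_i\sigma_i e^{rx_i}$, together with the limiting cases $\max_{j\in\supp\sigma}x_j$ for $r=+\infty$, $\min_{j\in\supp\sigma}x_j$ for $r=-\infty$, and $\sum_{j\in\supp\sigma}\sigma_j x_j$ for $r=0$, and with each multiplicative constant $c>0$ becoming an additive constant $\log c$. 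Thus $T_i$ is built from three kinds of atoms and operations: the variables $x_1,\dots,x_n$, the additive constants $\log c$, and the ``log-sum-exp'' operations $y\mapsto\tfrac1r\log\sum_j\sigma_j e^{ry_j}$ (with their $\max$/$\min$/weighted-arithmetic-mean limits). Recall from \Cref{sec:existence} that a set $J$ is an action of \MIN (resp.\ \MAX) in state $i$ of $\Gamma_\infty(f)=\Gamma_\infty(T)$ precisely when $T_i(\alpha\unit_\cpt{J})$ stays bounded as $\alpha\to+\infty$ (resp.\ as $\alpha\to-\infty$); by monotonicity of $T$ these quantities are monotone in $\alpha$, so each such one-sided limit is either finite or equal to $\pm\infty$.

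The heart of the matter is the following claim, to be proved by structural induction on the expression: for every sub-expression $E$, every $K\subset[n]$, the dichotomy ``the conjugate sub-expression $\widetilde E$ evaluated at $\alpha\unit_K$ tends to $+\infty$ versus stays bounded, as $\alpha\to+\infty$'' — and, symmetrically, ``tends to $-\infty$ versus stays bounded, as $\alpha\to-\infty$'' — depends only on the signature of $E$. The base case ($E$ a variable $x_j$: escaping iff $j\in K$) is immediate, and the signature of a variable is the variable itself. For a multiplicative constant $c>0$ the conjugate adds the fixed number $\log c$, which affects neither dichotomy, and the signature deletes the constant. For a log-sum-exp node one uses the uniform two-sided bound $\max_{j\in\supp\sigma}y_j-C\leq\tfrac1r\log\sum_{j\in\supp\sigma}\sigma_j e^{ry_j}\leq\max_{j\in\supp\sigma}y_j$ when $r>0$ (with $C\geq0$ depending only on that node), its mirror image for $r<0$, and the elementary fact that along $\alpha\unit_K$ no cancellation can occur in a weighted arithmetic mean $\sum_{j\in\supp\sigma}\sigma_j y_j$ — each sub-expression being monotone in $\alpha$ and bounded on the relevant side — so such a sum escapes to $+\infty$ (resp.\ $-\infty$) exactly when one of its terms does. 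In each case the escape behavior of the node is a $\max$ / $\min$ / ``some-or-all'' combination of the escape behaviors of its children over $\supp\sigma$, and this is exactly the combination the signature performs; the induction hypothesis closes the step.

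Applying the claim with $E$ the expression defining $f_i$ and $K=\cpt{J}$ shows that, in every state $i$, the action spaces of both players in $\Gamma_\infty(f)$ are determined by the signature of the expression. Since dominions — and hence the property that \MIN and \MAX have no pair of disjoint dominions — depend only on these action spaces, the dominion condition for $f$ depends only on the signature, which is the first assertion. For the second, suppose first that the dominion condition holds for $f$; then it holds for every $g\in\Mbar_{nn}$ sharing the signature of $f$ (by the first assertion), and \Cref{cor:solvability-eigenproblem} gives each such $g$ a positive eigenvector. Conversely, if the dominion condition fails for $f$, then by the equivalence \ref{it:dominion-condition}$\Leftrightarrow$\ref{it:main-thm-iii} of \Cref{thm:existence-multiplicative} there is a diagonal matrix $D$ with positive diagonal entries such that $Df$ has no eigenvector; but $(Df)_i=D_{ii}f_i$ is obtained from the expression for $f_i$ by inserting one further positive multiplicative constant, so $Df\in\Mbar_{nn}$ and shares the signature of $f$. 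Hence not all maps sharing the signature of $f$ have a positive eigenvector, which completes the equivalence.

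I expect the structural-induction claim of the second paragraph to be the main obstacle: one must verify that the only features of a generalized-mean expression surviving in the ``behavior at infinity'' of its conjugate are the sign of each exponent $r$ and the support of each weight vector $\sigma$ — so that multiplicative constants and the precise values of $r$ and of the weights are asymptotically irrelevant — and that this is precisely the information the signature retains. Once this is isolated and the bookkeeping for nested log-sum-exp expressions is carried out, the reduction to \Cref{thm:existence-multiplicative} and \Cref{cor:solvability-eigenproblem} is routine.
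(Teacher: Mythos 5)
Your proposal is correct and follows essentially the same route as the paper: the finiteness of the limits $\lim_{\alpha\to\pm\infty} f_i(\exp(\alpha\unit_\cpt{J}))$ depends only on the signature (which the paper asserts as ``easily seen'' and you flesh out by structural induction), the forward implication then follows from \Cref{cor:solvability-eigenproblem}, and the converse uses the diagonal perturbations $Df$ together with the equivalence \ref{it:dominion-condition}$\Leftrightarrow$\ref{it:main-thm-iii} of \Cref{thm:existence-multiplicative}. The only difference is the added detail in the induction step, which is a faithful expansion of the paper's one-line claim rather than a different argument.
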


\begin{proof}
  The limits $\lim_{\alpha \to \pm \infty} f_i(\exp(\alpha \unit_{J^c}))$ are easily seen
  to depend only on the signature of $f$.
  Hence, if the dominion condition holds for $f$, then it holds for all the maps $g$
  that share a common signature with $f$.
  By \Cref{cor:solvability-eigenproblem}, all these maps have a positive eigenvector.

  Conversely, the latter property implies in particular that all the maps of the form
  $g = Df$, where $D$ is a diagonal matrix with positive diagonal entries, have
  a positive eigenvector.
  By \Cref{thm:existence-multiplicative}, $f$ satisfies the dominion condition.
\end{proof}

Hence, for maps in $\Mbar_{nn}$, the boundedness of {\em all} slice spaces (or
the existence of a positive eigenvector independently of the numerical values of
the parameters of the map) is algorithmically decidable.
Checking the existence of {\em one} nonempty and bounded slice space for a given map, or
equivalently, checking whether the eigenspace is nonempty and bounded, is a much harder problem.
A semidecision procedure is provided in \cite{LLN18}.
The decidability of this problem would follow from the conjectured decidability of
the real exponential field \cite{wilkie}.

\subsection{Nonnegative tensors}

Consider a $d$-order $n$-dimensional tensor $\Tsr$ defined by $n^d$ real entries,
$a_{i_1 \dots i_d}$ for $i_1,\dots,i_d \in [n]$.
It yields a self-map $f$ of $\R^n$, whose $i$th coordinate function is given by
\[
  f_i(x) = \big[ \Tsr x^{(d-1)} \big]_i :=
  \sum_{1 \leq i_2, \dots, i_d \leq n} a_{i \, i_2 \dots i_d} \; x_{i_2} \dots x_{i_d} .
\]
The tensor eigenvalue problem introduced by Lim \cite{Lim05} and Qi \cite{Qi05}
asks for the existence of an eigenvalue $\lambda \in \R$ and an eigenvector
$u \in \R^n$ solution of
\begin{equation}
  f(u) = \Tsr u^{(d-1)} = \lambda u^{d-1} ,
  \label{eq:tensor-eigenproblem}
\end{equation}
where $u^{d-1} := (u_1^{d-1}, \dots, u_n^{d-1})$.
If the tensor $\Tsr$ is nonnegative, meaning that $a_{i_1 \dots i_d} \geq 0$
for all multi-indices, a variant of this problem is the existence of a positive eigenvalue
$\lambda > 0$ and a positive eigenvector $u \in \Rplus^n$.

Lim showed that a nonnegative tensor $\Tsr$ has a positive eigenvalue and a unique positive
eigenvector (up to a scaling) if $\Tsr$ is {\em irreducible}, meaning that $f$ does not leave
invariant a nontrivial face of the positive orthant (see \cite[Thm.~1]{Lim05}).
Friedland, Gaubert and Han \cite{FGH13} showed that the same conclusion holds under
a milder condition, {\em weak irreducibility}, arising from \cite{GG04}.
The condition in the latter reference requires the strong connectivity of the directed graph
$\Gcal_\infty(\Tsr)$ defined by the set of nodes $[n]$ and an edge from $i$ to $j$ if
$\lim_{\alpha \to +\infty} f_i \big( \exp(\alpha \unit_{\{j\}}) \big) = +\infty$.
Alternatively, there is an edge from $i$ to $j$ if and only if there exists a set of indices
$(i_2, \dots, i_d)$ containing $j$ and such that $a_{i \, i_2 \dots i_d} > 0$.

\Cref{cor:hypergraphs-convex-slice-spaces} leads to a refinement of these results.
Let us introduce the hypergraph $\Hcal_\infty(\Tsr)$ corresponding to the hypergraph
$\Hcal_\infty^-(\log \circ f \circ \exp)$, as defined in \Cref{sec:algorithmic-aspects}.
Precisely, the set of nodes is $[n]$ and there is a hyperarc from a subset $J \subset [n]$ to
$\{i\}$ if $i \notin J$ and $\lim_{\alpha \to -\infty} f_i \big( \exp(\alpha \unit_J) \big) = 0$.
Equivalently, the pair $(J,\{i\})$ is a hyperarc of $\Hcal_\infty(\Tsr)$ if $i \notin J$ and
we have
\begin{equation}
  \label{eq:hyperarcs-tensors}
  \forall (i_2, \dots, i_d) \in [n]^{d-1}, \quad
  \big( \enspace a_{i \, i_2 \dots i_d} > 0 \implies J \cap \{i_2, \dots, i_d\} \neq \emptyset
  \enspace \big) .
\end{equation}
We define the \emph{pattern} of the tensor $\mathcal{A}$ to be the set of multi-indices
$(i_1,\dots,i_d)\in [n]$ such that $a_{i_1\dots i_d}>0$. 

\begin{corollary}
  \label{cor:eigenproblem-tensor}
  Let $\Tsr$ be a nonnegative $n$-dimensional tensor.
  Every nonnegative tensor with the same pattern as $\Tsr$ has
  a positive eigenvector if and only if the directed graph $\Gcal_\infty(\Tsr)$ has
  a unique final class $C$ and $\reach(C,\Hcal_\infty(\Tsr)) = [n]$.
\end{corollary}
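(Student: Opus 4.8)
The plan is to reduce \Cref{cor:eigenproblem-tensor} to the convex case of the slice-space criterion, \Cref{cor:hypergraphs-convex-slice-spaces}. The key device is to replace the degree-$(d-1)$ homogeneous map $f$ by the monotone, \emph{degree-one} homogeneous map $\tilde f$ with $\tilde f_i(x) := f_i(x)^{1/(d-1)}$. Since each $f_i$ is a nonnegative combination of monomials, $\tilde f$ is order-preserving and positively homogeneous of degree one; it is a self-map of $\Rplus^n$ once every row of $\Tsr$ carries a positive entry --- a standing requirement here, since otherwise $f$ would not be a self-map of $\Rplus^n$ and the framework would not apply. Taking $(d-1)$-th roots in \labelcref{eq:tensor-eigenproblem} shows that $u \in \Rplus^n$ solves the tensor eigenproblem for $\Tsr$ if and only if it is an eigenvector of $\tilde f$, and likewise for every nonnegative tensor $\Tsr'$ with the pattern of $\Tsr$ and its associated map $\tilde f'$.

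First I would move to logarithmic coordinates and set $T := \log \circ \tilde f \circ \exp$, so that
\[
  T_i(y) = \frac{1}{d-1}\,\log\Big(\sum_{i_2,\dots,i_d} a_{i\,i_2\dots i_d}\, e^{\,y_{i_2}+\dots+y_{i_d}}\Big) .
\]
This $T$ is monotone, additively homogeneous, and convex, being a positive multiple of a log-sum-exp of linear forms, so \Cref{cor:hypergraphs-convex-slice-spaces} applies to it. Next I would compute the asymptotics $\lim_{\alpha\to\pm\infty} T_i(\alpha\unit_J)$ and match them with the combinatorial data of $\Tsr$. As $\alpha \to +\infty$ one gets a finite limit precisely when no positive entry of row $i$ involves an index outside $J$, which (using \Cref{lem:support}) identifies $\supp(T_i)$, and hence the digraph $\Gcal_\infty(T)$ of \Cref{sec:convexity-graph-slice-spaces}, with $\Gcal_\infty(\Tsr)$. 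As $\alpha \to -\infty$ one gets a finite limit unless every positive entry of row $i$ meets $J$, which is exactly condition \labelcref{eq:hyperarcs-tensors}; hence $\Hcal_\infty^-(T) = \Hcal_\infty(\Tsr)$.

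With these identifications, \Cref{cor:hypergraphs-convex-slice-spaces} says that the right-hand side of \Cref{cor:eigenproblem-tensor} holds if and only if all slice spaces of $\tilde f$ are bounded in Hilbert's projective metric. By \Cref{thm:existence-multiplicative} this is equivalent both to item~\ref{it:main-thm-iii} (every map $D\tilde f$ with $D$ a positive diagonal matrix has a positive eigenvector) and to item~\ref{it:main-thm-v} (every monotone homogeneous uniform perturbation of $\tilde f$ has a positive eigenvector). It then remains to sandwich the left-hand side of \Cref{cor:eigenproblem-tensor} between these. On one side, if $\Tsr'$ has the pattern of $\Tsr$, the ratios $a'_{i\,\mathbf i}/a_{i\,\mathbf i}$ over the finite pattern lie between two positive constants $c \le C$, so $c\,f_i \le f'_i \le C\,f_i$ and thus $c^{1/(d-1)}\tilde f_i \le \tilde f'_i \le C^{1/(d-1)}\tilde f_i$, making $\tilde f'$ a uniform perturbation of $\tilde f$; so item~\ref{it:main-thm-v} produces a positive eigenvector of $\tilde f'$, hence of $\Tsr'$. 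On the other side, for a positive diagonal $D$ the tensor with entries $D_{ii}^{\,d-1}\, a_{i\,i_2\dots i_d}$ has the pattern of $\Tsr$ and its associated degree-one map is exactly $D\tilde f$; so ``every nonnegative tensor with the pattern of $\Tsr$ has a positive eigenvector'' implies item~\ref{it:main-thm-iii}. Chaining, the left-hand side of \Cref{cor:eigenproblem-tensor}, items~\ref{it:main-thm-iii} and~\ref{it:main-thm-v}, boundedness of all slice spaces, and the right-hand side are all equivalent.

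The routine but somewhat delicate step is the asymptotic bookkeeping in the second paragraph --- pinning down $\Gcal_\infty(T)=\Gcal_\infty(\Tsr)$ and $\Hcal_\infty^-(T)=\Hcal_\infty(\Tsr)$ from the explicit form of $T_i$ --- together with the careful treatment of rows of $\Tsr$ that vanish identically, where the open-orthant setting does not literally apply and one restricts, by convention, to tensors each of whose rows has a positive entry.
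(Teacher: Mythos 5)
Your proposal is correct and follows essentially the same route as the paper: pass to the convex, monotone, additively homogeneous map $T=(d-1)^{-1}\log\circ f\circ\exp$, identify $\Gcal_\infty(T)$ and $\Hcal_\infty^-(T)$ with $\Gcal_\infty(\Tsr)$ and $\Hcal_\infty(\Tsr)$, and conclude via \Cref{cor:hypergraphs-convex-slice-spaces} together with the pattern-invariance/stability of the dominion condition. Your explicit sandwiching of ``same pattern'' between the uniform-perturbation and diagonal-scaling items of \Cref{thm:existence-multiplicative}, and your remark on identically vanishing rows, merely spell out details the paper leaves implicit.
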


\begin{proof}
  Consider $T = (d-1)^{-1} \, \log \circ f \circ \exp$.
  This is a monotone additively homogeneous self-map of $\R^n$.
  Furthermore, any eigenpair $(\mu,v) \in \R \times \R^n$ of $T$ yields an eigenpair
  $(\lambda,u)$ of $f$ with the required properties, namely
  $\lambda = \operatorname{e}^{\mu (d-1)} > 0$ and $u = \exp(v) \in \Rplus^n$.
  It is a standard result that functions such as $\log \circ f_i \circ \exp$ are convex
  (see e.g.~\cite[Ex.~2.16, Ex.~2.52]{RW98}).
  Hence $T$ is convex.
  Now, by definition, the directed graph $\Gcal(T)$ and the hypergraph $\Hcal_\infty^-(T)$
  (see \Cref{sec:algorithmic-aspects}) are the same as $\Gcal_\infty(\Tsr)$ and
  $\Hcal_\infty(\Tsr)$, respectively.
  Furthermore, by construction, the latter graphs only depend on the pattern of $\Tsr$.
  The conclusion then follows from \Cref{cor:hypergraphs-convex-slice-spaces} and
  the ``stability'' result \Cref{thm:existence-additive}.
\end{proof}

\begin{example}
  Consider the nonnegative tensor $\Tsr$ of dimension 4 and order 3 whose positive entries are:
  \begin{equation*}
     a_{1 1 2} , \enspace a_{1 2 2} , \enspace
     a_{2 1 1} , \enspace a_{2 1 2} , \enspace a_{2 2 2} , \enspace
     a_{3 1 1} , \enspace a_{3 1 2} , \enspace a_{3 2 3} , \enspace
     a_{4 1 4} , \enspace a_{4 3 3} .
  \end{equation*}
  An instance of this tensor, with all nonzero coefficients equal to $1$, is represented
  by the following self-map of $\R^4$:
  \[
    f(x) = 
    \begin{pmatrix}
      x_1 x_2 + x_2^2 \\
      x_1^2 + x_1 x_2 + x_2^2 \\
      x_1^2 + x_1 x_2 + x_2 x_3 \\
      x_1 x_4 + x_3^2
    \end{pmatrix} .
  \]
  To check whether $\Tsr$ has a positive eigenvector for any numerical values of its parameters
  (provided they are positive), let us construct the graph $\Gcal_\infty(\Tsr)$ and
  the hypergraph $\Hcal_\infty(\Tsr)$.

  Their set of nodes is $\{1,2,3,4\}$.
  In $\Gcal_\infty(\Tsr)$, the edges going out of node 1, for instance, are $(1,1)$ and $(1,2)$
  since $a_{1 1 2}$ and $a_{1 2 2}$ are the only positive entries of $\Tsr$
  of the form $a_{1 i j}$.
  \Cref{fig:graphs-tensor}, on the left, show a representation of $\Gcal_\infty(\Tsr)$
  without loops.
  In $\Hcal_\infty(\Tsr)$, there is no hyperarc with head $\{2\}$, for instance, since
  the subset $\{1,2\}$ (which contains 2) is the smallest one which satisfies
  condition~\labelcref{eq:hyperarcs-tensors}.  
  However, there is a hyperarc from $\{1,2\}$ to $\{3\}$ since for every positive entry
  $a_{3 i j}$, one of the indices $i, j$ is either 1 or 2.
  \Cref{fig:graphs-tensor}, on the right, shows a concise representation of
  $\Hcal_\infty(\Tsr)$ where only the (hyper)arcs with minimal tail (with respect to the inclusion
  partial order) are represented.

  \begin{figure}[htbp]
    \begin{center}
      \begin{minipage}[b]{0.33\linewidth}
        \centering
        \begin{tikzpicture}
          [scale=1,on grid,bend angle=40,auto,
            state/.style={circle,draw,inner sep=0pt,minimum size=0.4cm},
            head/.style={<-,>=angle 60},
          tail/.style={->,>=angle 60}]
          \node[state] (s1) at (0,0) {$1$};
          \node[state] (s2) at (0.75,1.3) {$2$}
          edge [tail,bend right] (s1)
          edge [head,bend left] (s1);
          \node[state] (s3) at (-0.75,1.3) {$3$}
          edge [tail,bend right] (s1)
          edge [tail, bend left] (s2);
          \node[state] (s4) at (-1.5,0) {$4$}
          edge [tail,bend right] (s1)
          edge [tail,bend left] (s3);
          \node[state,color=black!0] (s5) at (-2.25,+1.5) [label=below:{$\Gcal_\infty(\Tsr)$}] {};
        \end{tikzpicture}
      \end{minipage}
      \hspace{3em}
      \begin{minipage}[b]{0.33\linewidth}
        \centering
        \begin{tikzpicture}
          [scale=1,on grid,auto,bend angle=40,
            state/.style={circle,draw,inner sep=0pt,minimum size=0.4cm},
            head/.style={<-,>=angle 60},
            tail/.style={->,>=angle 60},
          hyperedge/.style={>=angle 60}]
          \node[state] (s1) at (0,0) {$1$};
          \node[state] (s2) at (0.75,1.3) {$2$}
          edge [tail,bend left] (s1);
          \node[state] (s3) at (-0.75,1.3) {$3$};
          \node[state] (s4) at (-1.5,0) {$4$}
          edge [tail,bend right,color=black!0] (s1);
          \hyperedgewithangles[0.35][$(hyper@tail)!0.65!(hyper@head)$]{s3/-90,s1/150}{210}{s4/30};
          \hyperedgewithangles[0.35][$(hyper@tail)!0.65!(hyper@head)$]{s2/210,s1/90}{150}{s3/-30};
          \node[state,color=black!0] (s5) at (-2.25,+1.5) [label=below:{$\Hcal_\infty(\Tsr)$}] {};
        \end{tikzpicture}
      \end{minipage}
    \end{center}
    \caption{The graph $\Gcal_\infty(\Tsr)$ and the hypergraph $\Hcal_\infty(\Tsr)$ associated
      with the nonnegative tensor $\Tsr$.}
    \label{fig:graphs-tensor}
  \end{figure}

  The graph $\Gcal_\infty(\Tsr)$ has a unique final class $C = \{1,2\}$.
  Furthermore, the set of reachable nodes from $C$ in $\Hcal_\infty(\Tsr)$ is $\{1,2,3,4\}$.
  Hence, we deduce from \Cref{cor:eigenproblem-tensor} that the tensor $\Tsr$ has
  a positive eigenvector for any numerical instance.
\end{example}

\section{Concluding remarks}

We finally point out three open questions which emerge from the present work.
First, the notion of dominion is inherently combinatorial and finite-dimensional,
it would be valuable to generalize our existence and uniqueness
results for nonlinear eigenvectors to the infinite-dimensional setting.
Next, the present game theory approach is related to the geometry of the standard orthant.
It would be of great interest to find combinatorial or geometric conditions
for the existence and uniqueness of nonlinear maps defined on other finite-dimensional
cones, especially the cone of positive semidefinite matrices.
Finally, the existence condition in terms of dominions characterizes the situation
in which, for all diagonal matrices with positive diagonal entries,
the perturbed map $Df$ has a positive eigenvector.
Finer conditions may be hoped for if one relaxes the requirement
to find criteria invariant under this family of perturbations. 

\bibliographystyle{alpha}
\bibliography{references}

\end{document}